\newtheorem{thm}{Theorem}[section]
 \newtheorem{cor}[thm]{Corollary}
 \newtheorem{lem}[thm]{Lemma}
 \newtheorem{prop}[thm]{Proposition}
 \theoremstyle{definition}
  \newtheorem{defn}[thm]{Definition}
 \theoremstyle{remark}
 \newtheorem{rem}[thm]{Remark}
  \newtheorem{ex}[thm]{Example}
 \def\rank{\mathrm{rank}\,}
 \def\ml{\langle}
 \def\mr{\rangle}
 \def\ql{[}
 \def\qr{]}
 \def\m{\overline{\mathfrak{m}}}
 \def\M{\overline{\mathfrak{M}}}
\def\kappao{{\kappa_0}}
\def\kappav{{\kappa_5}}
\let\kappavv=\kappav
\def\kappaiv{{\kappa_4}}
\def\R{\mathbb R}
\begin{document}

\title{A four point characterisation for coarse median spaces}

\author{Graham A. Niblo}
\author{Nick Wright}
\author{Jiawen Zhang}
\address{School of Mathematics, University of Southampton, Highfield, SO17 1BJ, United Kingdom.}
\email{\{g.a.niblo,n.j.wright,jiawen.zhang\}@soton.ac.uk}

\date{}
\subjclass[2010]{20F65, 20F67, 20F69}
\keywords{Coarse median space, canonical metric, hyperbolicity, rank}

\thanks{Partially supported by the Sino-British Fellowship Trust by Royal Society.}
\baselineskip=16pt

\begin{abstract}
Coarse median spaces simultaneously generalise the classes of hyperbolic spaces and median algebras, and arise naturally in the study of the mapping class groups and many other contexts. Their definition as originally conceived by Bowditch requires median approximations for all finite subsets of the space. Here we provide a simplification of the definition in the form of a $4$-point condition analogous to Gromov's $4$-point condition defining hyperbolicity. We give an intrinsic characterisation of rank in terms of the coarse median operator and use this to give a direct proof that rank $1$ geodesic coarse median spaces are $\delta$-hyperbolic, bypassing Bowditch's use of asymptotic cones. A key ingredient of the proof is a new definition of intervals in coarse median spaces and an analysis of their interaction with  geodesics.
\end{abstract}
\maketitle

\section{Introduction}

Coarse median spaces and groups were introduced by Bowditch in 2013 \cite{bowditch2013coarse} as a coarse variant of classical median algebras. The notion of a coarse median group leads to a unified viewpoint on several interesting classes, including Gromov's hyperbolic groups, mapping class groups, and CAT(0) cubical groups. Bowditch showed that geodesic hyperbolic spaces are exactly geodesic coarse median spaces of rank 1, and mapping class groups are examples of coarse median spaces of finite rank \cite{bowditch2013coarse}. In 2014 \cite{behrstock2017hierarchically, behrstock2015hierarchically}, Behrstoke, Hagen and Sisto introduced the notion of hierarchically hyperbolic spaces, and showed that these are coarse median.

Intuitively, a coarse median space $(X,d,\mu)$ is a metric space $(X,d)$ equipped with a ternary operator $\mu$ (called the coarse median), in which every finite subset can be approximated by a finite CAT(0) cube complex, with distortion controlled by the metric. This can be viewed as a wide-ranging extension of Gromov's observation that in a $\delta$-hyperbolic space finite subsets can be approximated by trees. See also Zeidler's Master's thesis \cite{zeidler2013coarse}.

In this paper we simplify the definition of a coarse median space, replacing the requirement to approximate arbitrary finite subsets with a simplified $4$-point condition which may be viewed as a high dimensional analogue of Gromov's $4$-point condition for hyperbolicity. Our condition asserts that given any four points $a,b,c,d$ the two  iterated coarse medians $\mu(\mu(a,b,c),b,d)$ and $\mu(a,b,\mu(c,b,d))$ are uniformly close. As illustrated below this corresponds to an approximation by a CAT(0) cube complex of dimension $3$, where the corresponding iterated medians coincide.

\begin{figure}[h] 
   \centering
   \includegraphics[scale=0.5]{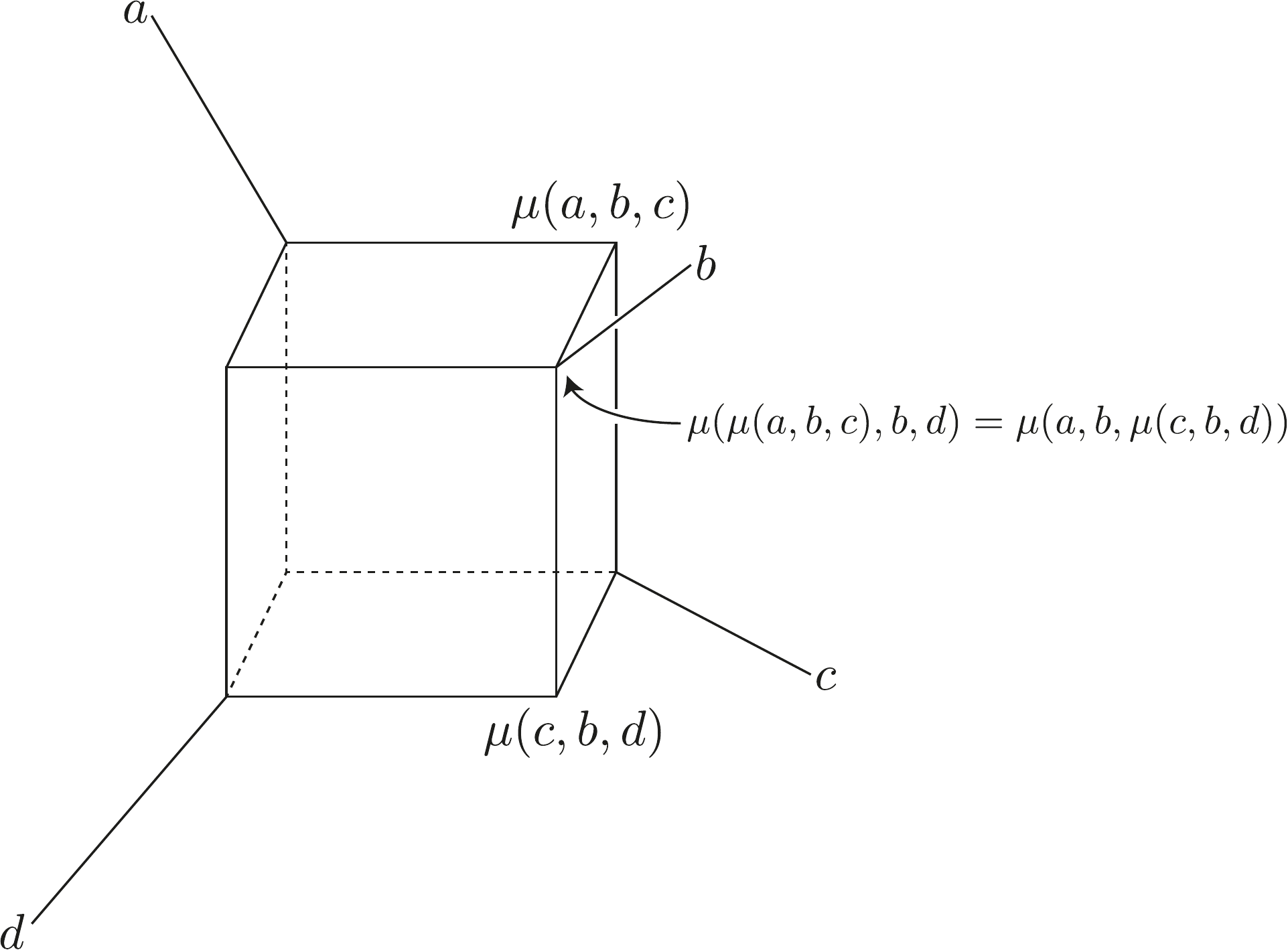}
   \caption{The CAT(0) cube complex associated to the free median algebra on $\{a,b,c,d\}$}
   \label{fig:freemedian}
\end{figure}

We recall that Gromov gave a $4$-point condition characterising hyperbolicity for geodesic spaces which, in essence, asserts that any four points can be approximated by one of the trees shown in Figure \ref{fig:treeapproximations}.
\begin{figure}[h] 
   \centering
   \includegraphics[scale=0.5]{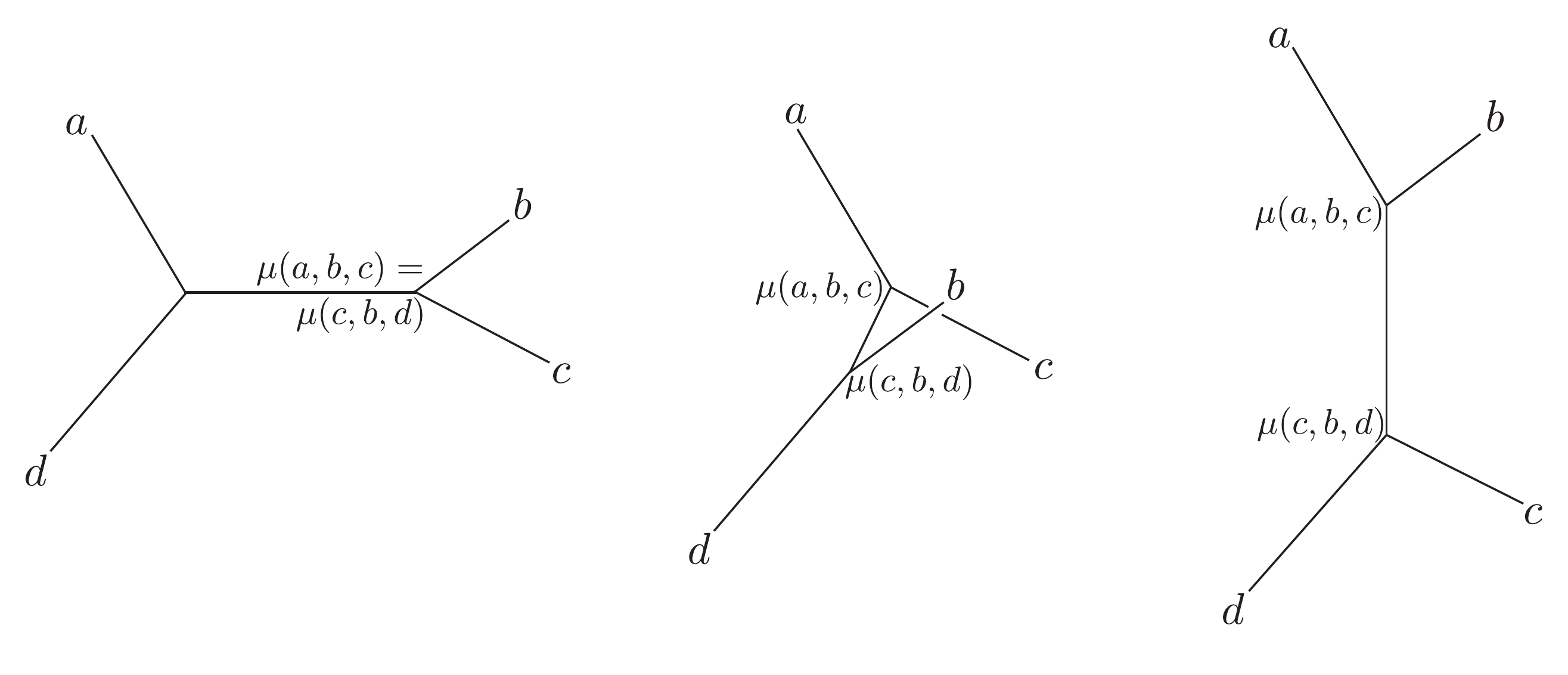}
   \caption{Gromov's $4$-point condition}
   \label{fig:treeapproximations}
\end{figure}
We can visualise each of these trees as degenerate cases of Figure \ref{fig:freemedian} in which two of the dimensions of the cube are collapsed, making clear the relationship between Gromov's $4$-point condition and ours, and the sense in which coarse median spaces are a higher dimensional analogue of $\delta$-hyperbolic spaces. The presence of the central $3$-cube, which may be arbitrarily large, allows for flat geometry.

The coarse $4$-point condition is also a coarse analogue of the $4$-point characterisation for median algebras, as introduced and studied by Kolibiar and Marcisov{\'a} in \cite{kolibiar1974question}. To establish the equivalence with Bowditch's original definition we introduce a model for the free median algebra generated by $n$ points, which may be of independent interest.

Any median algebra generated by $4$ points can be modelled by the $3$-dimensional CAT(0) cube complex illustrated in Figure \ref{fig:freemedian}, so it is {\it a priori} difficult to see how to characterise rank using our new definition. We overcome this by offering several intrinsic characterisations of rank in terms of the coarse median operator itself and which are equivalent to Bowditch's definition.

The rank 1 geodesic case is of independent interest, since, as remarked above, it coincides with the class of geodesic hyperbolic spaces, \cite{bowditch2013coarse}. Bowditch's proof that rank $1$ geodesic coarse median spaces are hyperbolic uses an ingenious asymptotic cones argument which conceals in part the strong interaction between quasi-geodesics and coarse median intervals in this case. Introducing a new definition of interval in a coarse median space, we give an alternative proof, bypassing the asymptotic cones argument, and instead exploiting a result of Papasoglu \cite{papasoglu1995strongly} and Pomroy \cite{pomroy}, see also \cite{chatterji2006characterization}. We also consider the behaviour of quasi-geodesics in higher rank coarse median spaces, giving an example in rank $2$ to show that even geodesics can wander far from the intervals defined by the coarse median operator. In a subsequent paper, \cite{nwz2} we  further develop the concept of the coarse interval structure associated to a coarse median space and, as an application, we show there that the metric data is determined by the coarse median operator itself. 

The paper is organised as follows. In Section \ref{preliminaries}, we recall Bowditch's definition of coarse median spaces and introduce our notion of (coarse) intervals (which differ in one small but crucial respect from the intervals studied by Bowditch). In Section \ref{4pointcondition}, we establish our $4$-point condition characterising coarse median spaces. In Section \ref{rank}, we give several characterisations for rank in terms of the coarse median operator and give a new proof of Bowditch's result concerning the hyperbolicity of rank $1$ geodesic coarse median spaces. Finally in Section \ref{counterexample}, we construct an example to show that geodesics do not have to remain close to intervals in a coarse median space of rank greater than $1$.

\section{Preliminaries}\label{preliminaries}

\subsection{Metrics and geodesics}

\begin{defn} Let $(X,d)$ and $(Y,d')$ be metric spaces.
\begin{enumerate}
  \item $(X,d)$ is said to be \emph{quasi-geodesic}, if there exist constants $L,C>0$ such that for any two points $x,y\in X$, there exists a map $\gamma \colon [0,d(x,y)] \rightarrow X$ with $\gamma(0)=x$, $\gamma(d(x,y))=y$, satisfying: for any $s,t\in [0,d(x,y)]$,
       $$L^{-1}|s-t|-C \leqslant d(\gamma(s),\gamma(t)) \leqslant L|s-t|+C.$$
  If we care about the constants we say that $(X,d)$ is $(L,C)$-quasi-geodesic, and if we do not care about the constant $C$ we say that $(X,d)$ is \emph{$L$-quasi-geodesic}. If $(X,d)$ is $(1,0)$-quasi-geodesic then we say that $X$ is \emph{geodesic}. When considering integer-valued metrics we make the same definitions restricting the intervals to intervals in $\mathbb Z$.
  \item A map $f:(X,d)\rightarrow (Y,d')$ is \emph{bornologous} if there exists an increasing map $\rho_+:\R^+\rightarrow \R^+$ such that for all $x,y\in X$, $d'(f(x), f(y)) \leqslant \rho_+(d(x,y))$.
  \item $(X,d)$ is said to be \emph{uniformly discrete} if there exists a constant $C>0$ such that for any $x \neq y \in X$, $d(x,y)\geqslant C$.
\item Two points $x,y\in X$ are said to be \emph{$s$-close} (with respect to the metric $d$) if $d(x,y)\leqslant s$. If $x$ is $s$-close to $y$ we write $x\thicksim_s y$.
\end{enumerate}
\end{defn}

\subsection{CAT(0) Cube Complexes}

Before considering \emph{coarse} median spaces, we first recall basic notions and results about CAT(0) cube complexes. We will survey the properties we need here, but guide the interested reader to \cite{BH99, chepoi2000graphs, gromov1987hyperbolic, niblo1998geometry, sageev1995ends} for more information.

A \emph{cube complex} is a polyhedral complex in which each cell is isometric to a unit Euclidean cube and the gluing maps are isometries. The \emph{dimension} of the complex is the maximum of the dimensions of the cubes. For a cube complex $X$, we can associate it with the \emph{intrinsic pseudo-metric} $d_{int}$, which is the maximal pseudo-metric on $X$ such that each cube embeds isometrically. When $X$ is connected and has finite dimension, $d_{int}$ is a complete geodesic metric on $X$. See \cite{BH99} for a general discussion on polyhedral complex and the associated intrinsic metric. A geodesic metric space  is \emph{CAT(0)} if all its geodesic triangles are slimmer than the comparative triangle in the Euclidean space. For a cube complex $(X,d_{int})$, Gromov gave a combinatorial characterisation of the CAT(0) condition \cite{gromov1987hyperbolic}: $X$ is CAT(0) if and only if it is simply connected and the link of each vertex is a flag complex (see also \cite{BH99}).

We also consider the edge path metric $d$ on the vertex set $V$ of a CAT(0) cube complex. For $x,y\in V$, the \emph{interval} is defined to be $[x,y]=\{z\in V:d(x,y)=d(x,z)+d(x,y)\}$, which consists of points on any edge path geodesic between $x$ and $y$. A CAT(0) cubical complex $X$ can be equipped with a set of \emph{hyperplanes}~\cite{chatterji2005wall, niblo1998geometry, nica2004cubulating, sageev1995ends} such that each edge is crossed by exactly one hyperplane.  Each hyperplane divides the space into two halfspaces, and the metric $d$ counts the number of hyperplanes separating a pair of points. The dimension of $X$, if it is finite, is the maximal number of pairwise intersecting hyperplanes. We say that a subset is \emph{convex} if it is an intersection of half spaces, and we can equivalently define the interval $[x,y]$ to be the intersection of all the halfspaces containing both $x$ and $y$.

Another characterisation of the CAT(0) condition was obtained by Chepoi \cite{chepoi2000graphs} (see also \cite{roller1998poc}): a flag cube complex $X$ is CAT(0) if and only if for any $x,y,z\in V$, the intersection $[x,y] \cap [y,z] \cap [z,x]$ consists of a single point $\mu(x,y,z)$, which is called the \emph{median} of $x,y,z$. Obviously, $m(x,y,z)\in [x,y]$, and \[
[x,y]=\{m(x,y,z): z\in V\}=\{w\in V: m(x,y,w)=w\}
\]
A graph such as $X^{(1)}$ satisfying this condition is called a \emph{median graph}.

Given a CAT(0) cube complex, we always take the canonical median structure $(V,m)$ defined by intersection of intervals as above.  The pair $(V,m)$ is a \emph{median algebra} \cite{isbell1980median}, as defined in the following section.

\subsection{Median Algebras}\label{medalg}
As discussed in \cite{bandelt1983median}, there are a number of equivalent formulations of the axioms for median algebras. We will use the following formulation from \cite{kolibiar1974question}, see also \cite{bandelt2008metric}:

\begin{defn}
Let $X$ be a set and $m$ a ternary operation on $X$. Then $m$ is a \emph{median operator} and the pair $(X,m)$ is a \emph{median algebra} if:
\begin{itemize}
  \item[(M1) Localisation:] $m(a,a,b)=a$;
  \item[(M2) Symmetry:] $m(a_1,a_2,a_3)=m(a_{\sigma(1)},a_{\sigma(2)},a_{\sigma(3)})$, where $\sigma$ is any permutation of $\{1,2,3\}$;
  \item[(M3) The 4-point condition:] $m(m(a,b,c),b,d)=m(a,b, m(c,b,d))$.
\end{itemize}
\end{defn}

Condition (M3) is illustrated by Figure \ref{fig:freemedian}, which shows the free median algebra generated by the $4$ points $a,b,c,d$. As shown in the figure, the iterated medians on both sides of the equality in (M3) evaluate to the vertex adjacent to $b$.

\begin{ex}\label{mediancube}
An important example is furnished by the ``\emph{median $n$-cube}'', denoted by $I^n$, which is the $n$-dimensional vector space over $\mathbb Z_2$ with the median operator $\mu_n$ given by majority vote on each coordinate.
\end{ex}

\begin{defn}
The \emph{rank} of a median algebra $(X,m)$ is the supremum of those $n$ for which there is a subalgebra of $(X,m)$ isomorphic to the median algebra $(I^n, \mu_n)$.
\end{defn}
For the median algebra defined by the vertex set of a CAT(0) cube complex, the rank coincides with the dimension of the cube complex. 
\medskip

For any points $a,b\in X$ we \emph{define} the \emph{interval between $a,b$} to be
\[
[a,b]:=\{m(a,x,b): x\in X\}.
\]
Axioms (M1) $\sim$ (M3) ensure that $[a,b]$ is also equal to the set $\{c\in X: m(a,c,b)=c\}$, since if $c=m(a,b,x)$ then:
\[m(c,a,b)=m(m(x,a,b),a,b)=m(x,m(a,b,a),b)=m(x,a,b)=c.
\]
We think of $m(a,x,b)$ as the projection of $x$ onto the interval $[a,b]$, and axiom (M3) can be viewed as an associativity axiom: For each $b\in X$ the binary operator
\[(a,c)\mapsto a*_b c:=m(a,b,c)\]
is associative. It is also commutative by (M2) and iterated projection gives rise to the iterated median introduced in \cite{vspakula2017coarse}.

\begin{defn}[\cite{vspakula2017coarse}]
Let $(X,m)$ be a median algebra. For $x_1\in X$, define
$$m(x_1;b):=x_1,$$
and for $k \geqslant 1$ and $x_1,\ldots,x_{k+1} \in X$, define
$$m(x_1,\ldots,x_{k+1};b):=m(m(x_1,\ldots,x_k;b),x_{k+1},b).$$
Note that this definition ``agrees'' with the original median operator $m$, since $m(x_1,x_2;b)=m(x_1,x_2,b)$.
\end{defn}

In the notation above, the definition reduces to:
\[m(x_1,\ldots,x_{k};b)=x_1*_bx_2*_b \ldots *_b x_k.
\]
A subset $Y$ of $A$ is said to be \emph{convex} if $m(x,b,y)\in Y$ for all $x,y\in Y$ and $b\in X$, or equivalently, if it is closed under the binary operation $*_b$ for all $b\in X$. The set $\{~m(x_1,\ldots,x_n;b)\mid b\in X~\}$ is the convex hull of the points $x_i$ and we think of  the iterated median $m(x_1,\ldots,x_n;b)$ as the projection of $b$ onto the convex hull.

We recall several properties of the iterated median operator proved in the original paper \cite{vspakula2017coarse}.
\begin{lem}[\cite{vspakula2017coarse}]\label{itrt median}
Let $(X,m)$ be a median algebra, and $x_1,\ldots,x_n,a,b\in X$. Then:
\begin{enumerate}
  \item The iterated median operator defined above is symmetric in $x_1,\ldots,x_n$;
  \item $\bigcap\limits_{k=1}^n [x_k,b]=[m(x_1,\ldots,x_n;b),b]$;
  \item If, in addition, $X$ has rank at most $d$, then there exists a subset $\{y_1,\ldots,y_k\} \subseteq \{x_1,\ldots,x_n\}$ with $k \leqslant d$, such that
  $$m(y_1,\ldots,y_k;b)=m(x_1,\ldots,x_n;b);$$
  \item Assume that $x_1,\ldots,x_n\in[a,b]$, then $\{x_1,\ldots,x_n\} \subseteq [a,m(x_1,\ldots,x_n;b)]$.
\end{enumerate}
\end{lem}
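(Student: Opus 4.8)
The plan is to derive parts (1), (2) and (4) from a single structural observation and then to treat part (3), the only one that uses the rank hypothesis, on its own. The observation is this: fixing $b\in X$ and setting $x*_by:=m(x,b,y)$, this operation is idempotent by (M1) and (M2) (since $m(x,b,x)=x$), commutative by (M2), and associative by (M3), so $(X,*_b)$ is a meet-semilattice. Writing $x\leqslant_b y$ for its order, one has $x\leqslant_b y\iff x*_by=x\iff m(x,b,y)=x\iff x\in[b,y]$, with $b$ least, and, unwinding the recursion by induction on $n$, the iterated median $m(x_1,\ldots,x_n;b)$ is precisely the semilattice product $x_1*_b\cdots*_bx_n$, that is, the greatest lower bound $\bigwedge_b\{x_1,\ldots,x_n\}$ with respect to $\leqslant_b$.

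Granting this, part (1) is immediate, since the value of a product under a commutative, associative operation is independent of the order of its factors. For part (2) it suffices, by the recursion $m(x_1,\ldots,x_n;b)=m(m(x_1,\ldots,x_{n-1};b),x_n,b)$ and induction on $n$, to handle $n=2$; there, reading intervals as principal down-sets $[a,b]=\{y:y\leqslant_b a\}$, we get
\[
[a,b]\cap[c,b]=\{y:y\leqslant_b a\ \text{and}\ y\leqslant_b c\}=\{y:y\leqslant_b a*_bc\}=[m(a,c,b),b],
\]
the middle equality being the defining property of the meet (a reader wishing to avoid semilattice language may instead verify the two inclusions directly from (M1)--(M3)). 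For part (4), if $x_1,\ldots,x_n\in[a,b]$ then $m(a,x_i,b)=x_i$, i.e. $x_i\leqslant_b a$ for all $i$, so $p:=m(x_1,\ldots,x_n;b)$ satisfies $p\leqslant_b x_i\leqslant_b a$; in particular $p\leqslant_b a$, and working inside the convex (hence median-closed) interval $[b,a]$, the median interval $[a,p]$ is the order interval $\{y:p\leqslant_b y\leqslant_b a\}$, which contains every $x_i$.

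For part (3) we must use $\rank X\leqslant d$. Choose $S\subseteq\{x_1,\ldots,x_n\}$ minimal among subsets with $\bigwedge_b S=p:=m(x_1,\ldots,x_n;b)$; it suffices to bound $|S|$, and the plan is to attach to $S$ a family of $|S|$ pairwise crossing halfspaces (convex sets with convex complement). Reindex so that $S=\{x_1,\ldots,x_k\}$ and set $p_j:=\bigwedge_b(S\setminus\{x_j\})$, so minimality gives $p<_b p_j$. Pick a halfspace $H_j$ with $p\in H_j$ and $p_j\notin H_j$. Then $b\in H_j$ (else $b,p_j\in H_j^c$ and convexity forces $p\in[b,p_j]\subseteq H_j^c$); $x_l\in H_j^c$ for $l\neq j$ (else $b,x_l\in H_j$ and convexity forces $p_j\in[b,x_l]\subseteq H_j$, using $p_j\leqslant_b x_l$); and $x_j\in H_j$, because $p=m(p_j,b,x_j)$ and a halfspace contains the median of a triple precisely when it contains at least two of its members. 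The walls $\{H_j,H_j^c\}$ then pairwise cross: for $i\neq j$ the three quadrants $H_i\cap H_j$, $H_i\cap H_j^c$, $H_i^c\cap H_j$ are witnessed by $b$, $x_i$, $x_j$, and $H_i^c\cap H_j^c$ by any further point $x_l$; since $k$ pairwise crossing walls force $\rank X\geqslant k$, we conclude $k\leqslant d$. The main obstacle is exactly this halfspace bookkeeping — establishing the separations and the ``two out of three'' vote defining $H_j$, and disposing of the small cases $k\leqslant 2$; the rest is formal.
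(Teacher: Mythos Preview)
The paper does not supply a proof here; the lemma is quoted from \cite{vspakula2017coarse}, with only the remark that (1) follows from commutativity and associativity of $*_b$. Your semilattice treatment of (1), (2) and (4) is correct and in line with that remark: reading $[x,b]$ as the down-set $\{y:y\leqslant_b x\}$ makes (2) transparent, and (4) follows (for instance from Isbell's condition $m(a,m(a,b,x_i),m(b,x_i,p))=m(a,b,x_i)$, using $m(a,b,x_i)=x_i$ and $m(b,x_i,p)=p$).

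For (3) there is a real gap. You wave aside the case $k=2$, but it cannot be disposed of, because part (3) as recorded here is false for $d=1$. In $\mathbb{Z}$ (rank $1$) with $b=0$, $x_1=1$, $x_2=-1$ one has $m(x_1,x_2;b)=0\notin\{x_1,x_2\}$, so no singleton subset works; the same happens in the star $K_{1,3}$. What your crossing-walls argument actually establishes is $k\leqslant\max(d,2)$. The repair --- which also matches how the paper actually uses (3), namely inside an interval $[\bar a,\bar b]$ --- is to add the hypothesis of (4): if $x_1,\ldots,x_n\in[a,b]$ then each $p_j\leqslant_b a$, and since $b\in H_j$, $p_j\in[b,a]$ and $p_j\notin H_j$, convexity of $H_j$ forces $a\in H_j^c$ for every $j$; thus $a$ witnesses $H_i^c\cap H_j^c$, the walls cross for all $k\geqslant 2$, and your argument goes through.
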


We remark that condition (1) here follows immediately from the commutativity and associativity of the binary operator $*_b$.

We will also make use of the following ``(n+2)-point condition''.
\begin{lem}\label{itrt eqn}
Let $(X,m)$ be a median algebra and $a,b,e_1,\ldots,e_{n+1}\in X$, then
$$m(a,e_{n+1},m(e_1,\ldots,e_n;b))=m(m(a,e_{n+1},e_1),\ldots,m(a,e_{n+1},e_n);b).$$
\end{lem}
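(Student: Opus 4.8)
The plan is to deduce the identity from the classical fact that projection onto an interval is a median homomorphism, together with a small amount of bookkeeping about the base point. Write $c:=e_{n+1}$ and $\phi(y):=m(a,c,y)$. Recall from Section~\ref{medalg} that $\phi$ is a retraction of $X$ onto $[a,c]$, so $\phi(y)=y$ whenever $y\in[a,c]$, and in particular $\phi(e_i)=m(a,c,e_i)\in[a,c]$; recall also the classical distributive law for median algebras (see \cite{bandelt1983median}), which says that $\phi$ is in fact a homomorphism: $\phi(m(x,y,z))=m(\phi(x),\phi(y),\phi(z))$ for all $x,y,z\in X$.

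The first step is to promote this to iterated medians. An induction on $k$, using the recursion $m(x_1,\ldots,x_{k+1};b)=m(m(x_1,\ldots,x_k;b),x_{k+1},b)$ and applying the homomorphism property at each stage to the triple $(m(x_1,\ldots,x_k;b),x_{k+1},b)$, gives
\[
\phi\bigl(m(x_1,\ldots,x_k;b)\bigr)=m\bigl(\phi(x_1),\ldots,\phi(x_k);\phi(b)\bigr)
\]
for all $x_1,\ldots,x_k,b\in X$. Specialising to $x_i=e_i$ yields
\[
m\bigl(a,c,m(e_1,\ldots,e_n;b)\bigr)=m\bigl(m(a,c,e_1),\ldots,m(a,c,e_n);\,m(a,c,b)\bigr),
\]
which is the desired identity except that the base point has become $\phi(b)=m(a,c,b)$ rather than $b$.

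The second step removes this discrepancy, i.e.\ shows that $m(z_1,\ldots,z_n;\phi(b))=m(z_1,\ldots,z_n;b)$ for $z_i:=m(a,c,e_i)\in[a,c]$. Set $w:=m(z_1,\ldots,z_n;b)$. Since $[a,c]$ is convex — it is the convex hull of $\{a,c\}$ — it contains the convex hull of $\{z_1,\ldots,z_n\}$, and $w$ lies in the latter; hence $w\in[a,c]$, so $\phi(w)=w$. On the other hand, the first displayed identity applied with $x_i=z_i$ (and using $\phi(z_i)=z_i$) gives $\phi(w)=m(z_1,\ldots,z_n;\phi(b))$. Comparing the two expressions for $\phi(w)$ completes this step, and combining it with the first step proves the lemma.

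I expect the base-point bookkeeping of the second step to be the only delicate part; the homomorphism property of $\phi$ is entirely standard. If a self-contained argument is preferred, one can note that it is enough to treat the case $n=2$: the general case then follows by induction, applying the $n=2$ instance to the points $m(e_1,\ldots,e_{n-1};b),e_n$ (in place of $e_1,e_2$) and then the case $n-1$, via $m(e_1,\ldots,e_n;b)=m(m(e_1,\ldots,e_{n-1};b),e_n,b)$. The case $n=2$ itself, $m(a,c,m(e_1,e_2,b))=m(m(a,c,e_1),m(a,c,e_2),b)$, is the single instance of the distributive law (plus convexity of $[a,c]$) that must be checked directly from (M1)--(M3).
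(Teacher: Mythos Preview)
Your proof is correct. The paper's argument is precisely the alternative you sketch in your final paragraph: a direct induction on $n$, unwinding $m(e_1,\ldots,e_n;b)=m(m(e_1,\ldots,e_{n-1};b),e_n,b)$, applying the five-point condition
\[
m\bigl(a,e_{n+1},m(u,v,b)\bigr)=m\bigl(m(a,e_{n+1},u),\,m(a,e_{n+1},v),\,b\bigr)
\]
once (with $u=m(e_1,\ldots,e_{n-1};b)$, $v=e_n$), and then invoking the inductive hypothesis on the inner term. In particular the paper never needs your Step~2 base-point correction, since the five-point identity already leaves the naked $b$ in the last slot.

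Your main route is genuinely different: you use the stronger fact that $\phi=m(a,c,\,\cdot\,)$ is a median \emph{homomorphism} (the full three-variable distributive law), push it through the iterated median, and then repair the base point via convexity of $[a,c]$ and the description of $m(z_1,\ldots,z_n;b)$ as a point of the convex hull of the $z_i$. This is more conceptual and makes the role of the interval projection transparent, at the cost of importing slightly more background (the full distributive law and the convex-hull interpretation of iterated medians). The paper's approach is shorter and uses only the two-variable five-point law, which is exactly the $n=2$ instance of the lemma itself; either way the nontrivial input is the equivalence of (M1)--(M3) with the five-point/distributive formulation, which both arguments take as known.
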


\begin{proof}
We prove this by induction on $n$.

$n=1$: the equation holds trivially. Assume it holds for all $k\leqslant n$. Then
\begin{eqnarray*}
&&m(a,e_{n+1},m(e_1,\ldots,e_n;b)) = m\big(a,e_{n+1},m(m(e_1,\ldots,e_{n-1};b),e_n,b)\big)\\
                                 &=& m\big(m(a,e_{n+1},m(e_1,\ldots,e_{n-1};b)),m(a,e_{n+1},e_n),b\big)\\
                                 &=& m\big(m(m(a,e_{n+1},e_1),\ldots,m(a,e_{n+1},e_{n-1});b),m(a,e_{n+1},e_n),b\big)\\
                                 &=& m(m(a,e_{n+1},e_1),\ldots,m(a,e_{n+1},e_n);b),
\end{eqnarray*}
where we use the inductive assumption in the third equation.
\end{proof}

We now mention two alternative definitions for median algebras.

According to Isbell \cite{isbell1980median}, a ternary operator $m$ defines a median \emph{if and only if} it satisfies (M1), (M2) and \emph{Isbell's condition}:
\[
m(a,m(a, b,c),m(b,c,d))=m(a,b,c).
\]
This says that $m(a,b,c)$ is in the interval from $a$ to $m(b,c,d)$, or in geometrical terms that the projection of $a$ onto $[b,c]$ provided by the median lies between $a$ and every other point ($m(b,c,d)$) of the interval $[b,c]$.

An alternative and algebraically powerful formulation, is that $(X,m)$ is a median algebra if it satisfies (M1), (M2) and the \emph{five point condition}:
\[
m(m(a,b,c),d,e)=m(a,m(b,d,e),m(c,d,e)),
\]
which is the $n=3$ case of Lemma \ref{itrt eqn}.
Setting $e=b$ one recovers (M3), while
\[
m(a,m(d,b,c),(a,b,c))=m(m(a,d,a),b,c)=m(a,b,c)
\]
recovering Isbell's condition.

\subsection{Coarse median spaces}\label{cma}

In \cite{bowditch2013coarse}, Bowditch introduced coarse median operators as follows:
\begin{defn}[Bowditch, \cite{bowditch2013coarse}]
Given a metric space $(X,d)$, a \emph{coarse median (operator) on $X$} is a ternary operator $\mu\colon X^3 \rightarrow X$ satisfying the following conditions:
\begin{itemize}
  \item[(C1).] There is an affine function $\rho(t)=Kt+H_0$ such that for any $a,b,c,a',b',c' \in X$,
        $$ d(\mu(a,b,c), \mu(a',b',c')) \leqslant \rho(d(a,a')+d(b,b')+d(c,c')).$$
  \item[(C2).] There is a function $H\colon \mathbb{N} \rightarrow [0,+\infty)$, such that for any finite subset $A\subseteq X$ with $1 \leqslant |A| \leqslant p$, there exists a finite median algebra $(\Pi, \mu_{\Pi})$ and maps $\pi\colon A \rightarrow \Pi$, $\lambda\colon  \Pi \rightarrow X$ such that for any $x,y,z \in \Pi, a\in A$,
      $$\lambda \mu_{\Pi}(x,y,z) \thicksim_{H(p)} \mu(\lambda x, \lambda y, \lambda z),$$
      and
      $$\lambda \pi a \thicksim_{H(p)} a.$$
      We may assume that $\Pi$ is generated by $\pi(A)$.
\end{itemize}
We say that two coarse median operators $\mu_1, \mu_2$ on a metric space $(X,d)$ are  \emph{uniformly close} if there is a uniform bound on the set of distances
\[\{d(\mu_1(a,b,c), \mu_2(a,b,c))\mid a,b,c\in X\}.
\]
\end{defn}

\begin{rem}
The control function $\rho$ in (C1) is required by Bowditch to be affine, however it seems more natural in the context of coarse geometry to allow $\rho$ to be an \emph{arbitrary } function and indeed much of what we show in this paper works with that variation. This may allow wider applications in the future (in \cite{nwz2} we introduce and study such a generalisation in the context of coarse interval structures), though we note that when $X$ is a quasi-geodesic space the existence of any  control function $\rho$ guarantees that $\rho$ may be replaced by an affine control function as in (C1). In this paper we will assume that $\rho$ is affine throughout, but note that many of the statements and arguments can be suitably adapted to the more general context.
\end{rem}

We refer to the functions $\rho,H$ appearing in conditions (C1), (C2) as \emph{parameters} of the coarse median, or since $\rho$ has the form $\rho(t)=Kt+H_0$ we will sometimes refer to $K,H_0,H$ as parameters of the coarse median. The parameters are not unique, and neither are they part of the data of the coarse median; it is merely their existence which is required.

\begin{rem}\label{median assump}
As noted by Bowditch there is a constant $\kappao>0$ such that:
\begin{itemize}
  \item $\mu(a,a,b)\thicksim_{\kappao} a$;
  \item $\mu(a_1,a_2,a_3)\thicksim_{\kappao} \mu(a_{\sigma(1)},a_{\sigma(2)},a_{\sigma(3)})$ for any permutation $\sigma\in S_3$.
\end{itemize}
It follows that any coarse median operator on $(X,d)$ may be replaced by another to which it is uniformly close and which satisfies the first two median axioms (M1) and (M2), i.e. we may always assume that $\kappao=0$.
We note here that this is true even if we replace the affine control function by an arbitrary $\rho$ as discussed above, taking $\kappao=2\rho(3H(3))+2H(3)$.
\end{rem}

With this in mind we take the following as our definition of a coarse median space.
\begin{defn}\label{def for cma}
A \emph{coarse median space} is a triple $(X,d,\mu)$ where $d$ is a metric on $X$ and $\mu$ is a ternary operator on $X$ satisfying conditions (M1), (M2), (C1) and (C2).

A map $f$ between coarse median spaces $(X,d_X,\mu_X), (Y,d_Y,\mu_Y)$ is said to be an \emph{$L$-quasi-morphism} if for any $a,b,c\in X$, $\mu_Y(f(a),f(b),f(c))\thicksim_L f(\mu_X(a,b,c))$.
\end{defn}

We note that given a median algebra any metric on this will satisfy axiom (C2), however the metric must be chosen carefully if we wish it to also satisfy axiom (C1). Of course in the case that the median algebra has finite intervals, and hence arises as the vertex set of a CAT(0) cube complex, then both the intrinsic and edge path metrics satisfy this axiom. 

\subsection{The rank of a coarse median space}
Rank is a proxy for dimension in the context of coarse median spaces, directly analogous to the notion of dimension for a  CAT(0) cube complex.

\begin{defn}\label{rankdef}
Let $n$ be a natural number. We say $X$ has \emph{rank at most $n$} if there exist parameters $\rho,H$ for which we can always choose the approximating median algebra $\Pi$ in condition (C2) to have rank at most $n$.
\end{defn}

We remark that for a median algebra equipped with a suitable metric making it a coarse median space, the rank as a median algebra gives an upper bound for the rank as a coarse median space, however these need not agree. For example any finite median algebra has rank $0$ as a coarse median space. 

Zeidler \cite{zeidler2013coarse} showed that, at the cost of increasing the rank of the approximating median algebra, one can always assume that the map $\lambda \pi$ from condition (C2) is the inclusion map $\iota_A \colon A \hookrightarrow X$. We will now  show that this can be achieved without increasing the rank:

\begin{lem}\label{median rank assump}
Assume $(X,d,\mu)$ is a coarse median space. Then, at the cost of changing the parameter function $H$, one can always change the triples $\Pi, \lambda, \pi$ provided by axiom (C2), so that $\lambda \pi=\iota_A \colon A \hookrightarrow X$ (the inclusion map), without changing the rank of $\Pi$.
\end{lem}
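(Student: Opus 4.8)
The plan is to modify the approximation data $(\Pi,\lambda,\pi)$ from (C2) by ``absorbing'' the defect of $\lambda\pi$ into the median algebra itself, without enlarging its rank. Given a finite set $A\subseteq X$ with $|A|\leqslant p$, apply (C2) to obtain $(\Pi,\mu_\Pi)$, $\pi\colon A\to\Pi$ and $\lambda\colon\Pi\to X$ with $\lambda\mu_\Pi(x,y,z)\thicksim_{H(p)}\mu(\lambda x,\lambda y,\lambda z)$ and $\lambda\pi a\thicksim_{H(p)}a$. We may assume $\Pi$ is generated by $\pi(A)$. The idea is to form a new median algebra $\Pi'$ by adjoining to the abstract median algebra $\Pi$ one new generator $\hat a$ for each $a\in A$, taking $\Pi'$ to be the median algebra generated by $\Pi$ together with these new points subject to \emph{no} relations beyond those forced by the median axioms applied within $\Pi$; concretely one realises this as a submedian algebra of a sufficiently large median $N$-cube. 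One then extends $\lambda$ to $\lambda'\colon\Pi'\to X$ by sending $\hat a\mapsto a$ and extending over the generated algebra using $\mu$ (so $\lambda'$ of an iterated median of generators is the corresponding iterated coarse median in $X$), and defines $\pi'\colon A\to\Pi'$ by $\pi'a=\hat a$. Then $\lambda'\pi' = \iota_A$ on the nose.

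The key steps, in order, are: (i) construct $\Pi'$ precisely — the cleanest route is to note that $\Pi$, being finite, embeds as a submedian algebra of some median $N$-cube $I^N$ (every finite median algebra does); choose an additional coordinate block, one coordinate for each $a\in A$, and let $\hat a$ be the image of $\pi(a)$ modified to be $1$ in exactly the coordinate assigned to $a$ and agreeing with $\pi(a)$ elsewhere; then $\Pi'$ is the submedian algebra of this larger cube generated by $\Pi\cup\{\hat a: a\in A\}$; (ii) control the rank of $\Pi'$ — here the point is that although $\Pi'$ sits inside a cube of larger dimension, the only \emph{new} pairwise-crossing hyperplanes are the $|A|$ coordinates we added, and each of these separates only $\hat a$ from everything else, hence no two of the new hyperplanes cross within $\Pi'$ and no new hyperplane crosses a hyperplane of $\Pi$ \emph{within $\Pi'$} beyond what is already witnessed in $\Pi$; a short combinatorial argument then shows $\mathrm{rank}(\Pi')=\mathrm{rank}(\Pi)$ (one must check that adding a ``pendant'' generator to a median algebra never raises the rank — this follows since any embedded $I^{k}$ in $\Pi'$ meets the halfspace $\{\hat a = 1\}$ in a face, and a proper nonempty face of a cube is a cube of strictly smaller dimension unless the coordinate is constant on it, so the cube already lives in $\Pi$); (iii) verify the two estimates of (C2) for $(\Pi',\lambda',\pi')$ with a new parameter $H'(p)$, using condition (C1) and Remark \ref{median assump} to propagate the original $H(p)$-closeness through the finitely many median operations needed to express any element of $\Pi'$ in terms of generators; since $\Pi'$ is generated by at most $2|A|\leqslant 2p$ points, the number of such operations and hence the blow-up of the constant is bounded in terms of $p$ alone, giving $H'$ depending only on $H$ and $p$; (iv) record that $\lambda'\pi'=\iota_A$ by construction.

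The main obstacle I expect is step (ii): making rigorous the claim that adjoining the pendant generators $\hat a$ does not increase the rank. The naive realisation inside $I^N$ is fine for (C2) but a priori the new cube has larger dimension, so one cannot simply quote the dimension of the ambient complex. The argument must be intrinsic to $\Pi'$: one shows that every median subcube $I^k\hookrightarrow\Pi'$ can be pushed into $\Pi$. For this, observe that for each new coordinate $e_a$ (the hyperplane separating $\hat a$), the halfspace $\{e_a=0\}$ is convex and contains $\Pi$; if an embedded $I^k$ is not entirely contained in $\{e_a=0\}$ then $e_a$ restricts to a genuine hyperplane of that $I^k$, and the two halves are each embedded copies of $I^{k-1}$, one of which lies in $\{e_a=0\}$; iterating over all the finitely many new coordinates deposits a full $I^{k}$'s worth of data — or rather an embedded $I^{k}$ — inside $\bigcap_a\{e_a=0\}=\Pi$, so $\mathrm{rank}(\Pi')\leqslant\mathrm{rank}(\Pi)$, and the reverse inequality is trivial. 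The remaining steps are routine propagation-of-constants arguments of the kind already used in Lemma \ref{itrt eqn} and Remark \ref{median assump}.
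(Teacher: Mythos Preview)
Your approach can be made to work, but it is far more elaborate than the paper's and has a real gap in step (ii). The paper dispenses with cube embeddings entirely: it takes $\Pi'=\Pi\sqcup A$ as a bare set, defines a retraction $\tau\colon\Pi'\to\Pi$ by $\tau|_\Pi=\mathrm{id}$ and $\tau(a)=\pi(a)$, and sets $\mu'_\Pi(x,y,z)=\mu_\Pi(\tau x,\tau y,\tau z)$ except when two arguments coincide, where (M1) is imposed by hand. With $\pi'$ the inclusion of $A$ and $\lambda'=\lambda\sqcup\iota_A$, the quasi-morphism estimate is a two-line computation yielding $H'(p)=\rho(3H(p))+H(p)$, and rank is preserved trivially: any median of three pairwise distinct points of $\Pi'$ lands in $\Pi$, so no embedded $I^k$ with $k\geq 2$ can use a vertex from $A$.

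Your cube construction in fact produces an isomorphic object, though you do not notice this. The assertion that each new hyperplane $e_a$ ``separates only $\hat a$ from everything else'' is precisely the claim $\Pi'\cap\{e_a=1\}=\{\hat a\}$, and it is not obvious: it needs an argument (for instance, induct on median expressions, using that a majority vote among coordinates with at most one $1$ returns $0$ unless two arguments equal $\hat a$, in which case the median is $\hat a$). Once you prove it, together with the companion fact $\Pi'\cap\bigcap_a\{e_a=0\}=\Pi$ (also unproven in your sketch), you get $\Pi'=\Pi\sqcup\{\hat a:a\in A\}$ as a set, and a direct coordinate check shows the induced median is exactly the paper's $\mu'_\Pi$. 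Your step (iii) then collapses --- there is nothing beyond $\Pi\cup\{\hat a\}$ to extend $\lambda'$ over, so the appeal to free median algebras on $2p$ generators is unnecessary. Finally, the sentence ``iterating over all the finitely many new coordinates deposits \ldots\ an embedded $I^k$ inside $\Pi$'' is not a valid deduction as written (passing to a halfspace halves the cube and drops dimension); the correct argument, which your earlier parenthetical almost gives, is that a one-point halfspace forces $e_a$ to be constant on any embedded $I^k$ with $k\geq 2$, so the whole cube already lies in $\{e_a=0\}$.
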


\begin{proof}
Given a finite subset $A \subseteq X$ with $1 \leqslant |A| \leqslant p$, let the finite median algebra $(\Pi, \mu_{\Pi})$ and maps $\pi\colon A \rightarrow \Pi$, $\lambda \colon \Pi \rightarrow X$ be as in the definition above.

We now construct another finite median algebra $(\Pi', \mu'_\Pi)$. As a set, $\Pi'=\Pi \sqcup A$. Define a map $\tau \colon \Pi' \rightarrow \Pi$ by $\tau x=x$ if $x\in \Pi$, and $\tau a=\pi a$ if $a\in A$. Now define a median $\mu'_\Pi$ on $\Pi'$ by:
\begin{itemize}
  \item $\mu'_\Pi(a,a,x)=\mu'_\Pi(a,x,a)=\mu'_\Pi(x,a,a)=a$, if $a\in A$ and $x\in \Pi'$;
  \item $\mu'_\Pi(x,y,z)=\mu_\Pi(\tau x,\tau y,\tau z)$, otherwise.
\end{itemize}
It is a direct calculation to check that $\mu_\Pi'$ satisfies the axioms of a median operator.
Now define $\pi' \colon A \rightarrow \Pi'$ by $\pi'a=a$; and $\lambda' \colon \Pi' = \Pi \sqcup A \rightarrow X$ by $\lambda'=\lambda \sqcup \iota_A$.
For any $x,y,z\in \Pi'$: if two of them are equal and sit in $A$, say $x=y\in A$, then
$$\lambda'\mu_\Pi'(x,y,z)=\lambda'x=x=\mu(\lambda'x, \lambda'y, \lambda'z);$$
otherwise, we have:
\begin{eqnarray*}
\lambda'\mu_\Pi'(x,y,z) & = &\lambda'\mu_\Pi(\tau x,\tau y,\tau z) \quad=\quad \lambda\mu_\Pi(\tau x,\tau y,\tau z)\\
                        & \thicksim_{H(p)} & \mu(\lambda'\tau x ,\lambda'\tau y ,\lambda'\tau z) ~~\thicksim_{\rho(3H(p))}~~ \mu(\lambda'x, \lambda'y, \lambda'z),
\end{eqnarray*}
where in the last estimate we use (C1) and the fact that for any $x\in A$,
$$\lambda'\tau x = \lambda' \pi x= \lambda \pi x \thicksim_{H(p)} x=\lambda'x.$$
Now for any $a\in A$, $\lambda'\pi'a=\lambda'a=a$, and by the construction, it is obvious that $\rank{\Pi'}=\rank{\Pi}$.

In conclusion we have constructed $\Pi',\lambda',\pi'$ such that $\lambda'\pi'$ is the inclusion, $\Pi'$ has the same dimension as $\Pi$ and
$$\lambda' \mu'_{\Pi}(x,y,z) \thicksim_{H'(p)} \mu(\lambda' x, \lambda' y, \lambda' z),$$
where  $H' \colon p \mapsto \rho(3H(p))+H(p)$.
\end{proof}

According to the above lemma there is no loss of generality in assuming that the triples $\Pi, \lambda, \pi$ provided by axiom (C2) satisfy the additional condition that $\lambda\pi$ is the inclusion map. Hereafter we will assume that parameters $\rho,H$ (or $K,H_0,H$) for a coarse median are chosen such that this holds.

In the finite rank case we introduce the following terminology.
\begin{defn}\label{convention}
For a coarse median space $(X,d,\mu)$, we say that $\rank X \leqslant n$ can be \emph{achieved under parameters $H,\rho$} (or $K,H_0,H$) if one can always choose $\Pi$ in condition (C2) with rank at most $n$ and $\lambda \pi=\iota_A \colon A \hookrightarrow X$. By Lemma \ref{median rank assump} there is no change to the definition of rank.
\end{defn}

\subsection{Iterated coarse medians}\label{iteratedcoarsemedians}

By analogy with the iterated medians defined in Section \ref{medalg}, we can define the \emph{iterated coarse median operator} in a coarse median space.

\begin{defn}\label{coarseiteratedmediandefn}
Let $(X,d,\mu)$ be a coarse median space and $b\in X$. For $x_1\in X$ define
$$\mu(x_1;b):=x_1,$$
and for $k \geqslant 1$ and $x_1,\ldots,x_{k+1} \in X$, define
$$\mu(x_1,\ldots,x_{k+1};b):=\mu(\mu(x_1,\ldots,x_k;b),x_{k+1},b).$$
Note that this definition ``agrees'' with the original coarse median operator $\mu$ in the sense that for any $a,b,c$ in $X$, $\mu(a,b,c)=\mu(a,b;c)$.
\end{defn}

We extend the estimates provided by axioms (C1) and (C2) for a coarse median operator to hold more generally for the iterated coarse median operators as follows:

\begin{lem}\label{itrt C1}
Let $(X,d)$ be a metric space with ternary operator $\mu$ satisfying (C1) with parameter $\rho$. Then for any $n$ there exists an increasing (affine)  function $\rho_n$ depending on $\rho$, such that for any $a_0, a_1,\ldots,a_n,b_0,b_1,\ldots,b_n \in X$:
\[
d(\mu(a_1,\ldots,a_n;a_0),\mu(b_1,\ldots,b_n;b_0)) \leqslant \rho_n \big(\sum_{k=0}^n d(a_k,b_k)\big).
\]
\end{lem}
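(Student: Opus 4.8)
The plan is to prove the estimate by induction on $n$, using (C1) at each step to peel off the outermost coarse median from the iterated expression.

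For the base case $n=1$ we have $\mu(a_1;a_0)=a_1$ and $\mu(b_1;b_0)=b_1$ by definition, so $d(\mu(a_1;a_0),\mu(b_1;b_0))=d(a_1,b_1)\leqslant d(a_0,b_0)+d(a_1,b_1)$, and we may take $\rho_1(t)=t$. For the inductive step, suppose $\rho_n$ has been constructed. Fix points $a_0,\ldots,a_{n+1},b_0,\ldots,b_{n+1}$ and write $S=\sum_{k=0}^{n+1}d(a_k,b_k)$. Unfolding the definition of the iterated coarse median gives $\mu(a_1,\ldots,a_{n+1};a_0)=\mu\big(\mu(a_1,\ldots,a_n;a_0),a_{n+1},a_0\big)$ and likewise for the $b$'s, so applying (C1) to this outermost median yields
\[
d\big(\mu(a_1,\ldots,a_{n+1};a_0),\mu(b_1,\ldots,b_{n+1};b_0)\big)\leqslant \rho\Big(d\big(\mu(a_1,\ldots,a_n;a_0),\mu(b_1,\ldots,b_n;b_0)\big)+d(a_{n+1},b_{n+1})+d(a_0,b_0)\Big).
\]
By the inductive hypothesis the first summand inside $\rho$ is at most $\rho_n\big(\sum_{k=0}^{n}d(a_k,b_k)\big)\leqslant\rho_n(S)$ (using that $\rho_n$ is increasing), while each of the other two summands is at most $S$. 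Since $\rho$ is increasing we obtain
\[
d\big(\mu(a_1,\ldots,a_{n+1};a_0),\mu(b_1,\ldots,b_{n+1};b_0)\big)\leqslant \rho\big(\rho_n(S)+2S\big),
\]
so it suffices to set $\rho_{n+1}(t):=\rho\big(\rho_n(t)+2t\big)$. The map $t\mapsto\rho_n(t)+2t$ is affine and increasing because $\rho_n$ is, and $\rho$ is affine and increasing by hypothesis, so $\rho_{n+1}$ is again affine and increasing, completing the induction.

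There is no real obstacle here; the only mild subtlety is that one must rephrase the bound purely in terms of the single quantity $S=\sum_{k=0}^{n+1}d(a_k,b_k)$ before invoking monotonicity of $\rho$ and $\rho_n$, since the inductive hypothesis together with (C1) naturally produces a nested expression in several separate distances. If one wishes to exhibit the constants explicitly, writing $\rho(t)=Kt+H_0$ and $\rho_n(t)=K_nt+H_n$ gives the recursion $K_{n+1}=K(K_n+2)$, $H_{n+1}=KH_n+H_0$ with $K_1=1$, $H_1=0$, which can be solved in closed form but is not needed for the statement.
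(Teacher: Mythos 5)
Your proof is correct and takes essentially the same approach as the paper's: induction on $n$, unfolding the outermost iterated median and applying (C1), then bounding the remaining distances by the total sum $S$ and invoking monotonicity. The only cosmetic difference is that you bound $d(a_{n+1},b_{n+1})+d(a_0,b_0)\leqslant 2S$ term by term, giving the recursion $\rho_{n+1}(t)=\rho(\rho_n(t)+2t)$, whereas the paper notes these two terms are already part of the sum $S$ and so obtains the slightly tighter $\rho_n(t)=\rho(\rho_{n-1}(t)+t)$; both yield valid affine increasing control functions.
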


\begin{proof}
We carry out induction on $n$. This is trivial for $n=1$ with $\rho_1(t)=t$. Now consider the case $n>1$ and assume that the result holds for $n-1$:
\begin{eqnarray*}
&&d(\mu(a_1,\ldots,a_n;a_0),\mu(b_1,\ldots,b_n;b_0))\\
 &=& d\big(\mu(\mu(a_1,\ldots,a_{n-1};a),a_n,a_0),\mu(\mu(b_1,\ldots,b_{n-1};b),b_n,b_0)\big)\\
 &\leqslant & \rho\big(d(\mu(a_1,\ldots,a_{n-1};a_0),\mu(b_1,\ldots,b_{n-1};b_0))+d(a_n,b_n)+d(a_0,b_0)\big)\\
 &\leqslant & \rho \big(\rho_{n-1} \big(\sum_{k=0}^{n-1} d(a_k,b_k)\big) + d(a_n,b_n)+d(a_0,b_0)\big)\\
 &\leqslant & \rho \big(\rho_{n-1} \big(\sum_{k=0}^{n} d(a_k,b_k)\big) + \sum_{k=0}^{n} d(a_k,b_k)\big)\\
 &= & \rho_n\big(\sum_{k=0}^n d(a_k,b_k)\big),
\end{eqnarray*}
where $\rho_n(t):=\rho(\rho_{n-1}(t)+t)$. We use (C1) in the third line, and the inductive assumption in the fourth. Note that  as $\rho_{n-1}$ is increasing $\rho_n$ is also increasing.
\end{proof}

Recall that in Bowditch's definition of a coarse median space axiom (C2) says that for any finite $A \subseteq X$ with $|A|\leqslant p$, the approximation map $\sigma: (\Pi,\mu_\Pi) \rightarrow (X,\mu)$ is an $H(p)$-quasi-morphism, i.e. for any $x,y,z\in \Pi$,
$$\sigma \mu_{\Pi}(x,y,z) \thicksim_{H(p)} \mu(\sigma x, \sigma y, \sigma z).$$

\begin{lem}\label{itrt C2}
Let $(X,d)$ be a metric space with ternary operator $\mu$ satisfying (C1) with parameter $\rho$, $(\Pi,\mu_\Pi)$ a median algebra, and $\sigma \colon \Pi \rightarrow X$ an $L$-quasi-morphism. Then there exists a constant $H_n(L)$ (depending on $\rho$) such that for any $x_1,\ldots,x_n,b\in\Pi$,
$$\sigma(\mu_\Pi(x_1,\ldots,x_n;b))\thicksim_{H_n(L)}\mu(\sigma(x_1),\ldots,\sigma(x_n);\sigma(b)).$$
\end{lem}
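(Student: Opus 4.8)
The plan is to proceed by induction on $n$, mirroring the inductive structure of the definition of the iterated (coarse) median and of Lemma \ref{itrt C1}. For $n=1$ the statement is trivial since $\mu_\Pi(x_1;b)=x_1$ and $\mu(\sigma(x_1);\sigma(b))=\sigma(x_1)$, so we may take $H_1(L)=0$. For the inductive step, suppose the estimate holds for $n-1$ with constant $H_{n-1}(L)$. Unwinding the definitions,
\[
\sigma\big(\mu_\Pi(x_1,\ldots,x_n;b)\big)=\sigma\big(\mu_\Pi(\mu_\Pi(x_1,\ldots,x_{n-1};b),x_n,b)\big),
\]
and since $\sigma$ is an $L$-quasi-morphism, this is $L$-close to
\[
\mu\big(\sigma(\mu_\Pi(x_1,\ldots,x_{n-1};b)),\,\sigma(x_n),\,\sigma(b)\big).
\]
Now I would apply the inductive hypothesis to replace the first entry $\sigma(\mu_\Pi(x_1,\ldots,x_{n-1};b))$ by $\mu(\sigma(x_1),\ldots,\sigma(x_{n-1});\sigma(b))$, which are $H_{n-1}(L)$-close, and feed this into the (C1) estimate: changing one argument of $\mu$ by at most $H_{n-1}(L)$ changes the output by at most $\rho(H_{n-1}(L))$. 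This yields that $\sigma(\mu_\Pi(x_1,\ldots,x_n;b))$ is $\big(L+\rho(H_{n-1}(L))\big)$-close to
\[
\mu\big(\mu(\sigma(x_1),\ldots,\sigma(x_{n-1});\sigma(b)),\,\sigma(x_n),\,\sigma(b)\big)=\mu(\sigma(x_1),\ldots,\sigma(x_n);\sigma(b)),
\]
the last equality being the definition of the iterated coarse median. So the recursion $H_n(L):=L+\rho(H_{n-1}(L))$ works, and $H_n(L)$ depends only on $L$, $\rho$ and $n$ as claimed.

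There is no serious obstacle here; the only point requiring a little care is the bookkeeping in the inductive step, namely making sure we combine the two sources of error correctly — one $L$-error coming from the single application of the quasi-morphism property of $\sigma$ to the outermost coarse median, and one $\rho(H_{n-1}(L))$-error coming from propagating the inductive error through axiom (C1). Both telescope into the stated constant. One should also note, as in Lemma \ref{itrt C1}, that since $\rho$ is increasing (affine), the resulting constants $H_n(L)$ are finite and increasing in $L$, and that the argument is insensitive to which of the median axioms (M1), (M2) we assume for $\mu$ — we only use (C1) together with the $L$-quasi-morphism hypothesis. This makes the lemma applicable in particular to the approximation maps $\lambda\colon\Pi\to X$ from axiom (C2), where $L=H(p)$, giving the promised extension of (C2) to iterated coarse medians.
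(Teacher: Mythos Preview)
Your proof is correct and follows essentially the same inductive argument as the paper: unwind the definition, apply the $L$-quasi-morphism property once to the outermost median, then propagate the inductive error through (C1), obtaining the recursion $H_n(L)=L+\rho(H_{n-1}(L))$. The only cosmetic difference is that the paper separately records the base case $H_2(L)=L$ (rather than $L+\rho(0)$ as your recursion would give), which yields a marginally sharper constant but is immaterial to the lemma.
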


\begin{proof}
We carry out induction on $n$. For the case $n=1$, set $H_1(L)=0$; and for the case $n=2$, set $H_2(L)=L$. Now assume $n>2$, for any $x_1,\ldots,x_n,b\in\Pi$, we have:
$$\sigma(\mu_\Pi(x_1,\ldots,x_{n-1};b))\thicksim_{H_{n-1}(L)}\mu(\sigma(x_1),\ldots,\sigma(x_{n-1});\sigma(b)).$$
Then
\begin{eqnarray*}
\sigma(\mu_\Pi(x_1,\ldots,x_n;b))&=& \sigma(\mu_\Pi(\mu_\Pi(x_1,\ldots,x_{n-1};b),x_n,b)) \\
&\thicksim_L& \mu(\sigma(\mu_\Pi(x_1,\ldots,x_{n-1};b)),\sigma(x_n),\sigma(b))\\
&\thicksim_{\rho(H_{n-1}(L))}& \mu(\mu(\sigma(x_1),\ldots,\sigma(x_{n-1});\sigma(b)),\sigma(x_n),\sigma(b))\\
&=& \mu(\sigma(x_1),\ldots,\sigma(x_n);\sigma(b)),
\end{eqnarray*}
where we use the definition of quasi-morphism in the second line, and (C1) as well as the inductive assumption in the third line. Finally, take $H_n(L)=\rho(H_{n-1}(L))+L$ for $n>2$.
\end{proof}

Now we prove a coarse version of Lemma \ref{itrt eqn}. Note that the case $n=1$ is precisely the coarse analogue of the five point condition for a median algebra.
\begin{lem}\label{itrt eqn coarse}
Let $(X,d,\mu)$ be a coarse median space with parameters $\rho,H$, then there exists a constant $C_n$ depending on $\rho,H$ such that for any $a,b,e_1,\ldots,e_{n+1}\in X$,
$$\mu(a,e_{n+1},\mu(e_1,\ldots,e_n;b))\thicksim_{C_n}\mu(\mu(a,e_{n+1},e_1),\ldots,\mu(a,e_{n+1},e_n);b).$$
\end{lem}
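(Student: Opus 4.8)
The plan is to transport the \emph{exact} median identity of Lemma \ref{itrt eqn} from an approximating median algebra back to $X$. Since that identity holds on the nose in any median algebra, it suffices to pull the points $a,b,e_1,\ldots,e_{n+1}$ into an approximating algebra furnished by axiom (C2), invoke Lemma \ref{itrt eqn} there, and push the resulting equality back to $X$, controlling the accumulated error with (C1) together with the iterated estimates of Lemmas \ref{itrt C1} and \ref{itrt C2}.

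In detail, set $A=\{a,b,e_1,\ldots,e_{n+1}\}$, so that $|A|\leqslant p:=n+3$, and let $(\Pi,\mu_\Pi)$, $\pi\colon A\to\Pi$ and $\lambda\colon\Pi\to X$ be as in axiom (C2), chosen by Lemma \ref{median rank assump} so that $\lambda\pi=\iota_A$; recall that $\lambda$ is then an $H(p)$-quasi-morphism. Writing $\bar x$ for $\pi x$, Lemma \ref{itrt eqn} applied inside $(\Pi,\mu_\Pi)$ gives
\[
\mu_\Pi\big(\bar a,\bar e_{n+1},\mu_\Pi(\bar e_1,\ldots,\bar e_n;\bar b)\big)=\mu_\Pi\big(\mu_\Pi(\bar a,\bar e_{n+1},\bar e_1),\ldots,\mu_\Pi(\bar a,\bar e_{n+1},\bar e_n);\bar b\big),
\]
so applying $\lambda$ to both sides produces equal points of $X$; it remains to estimate each side after applying $\lambda$.

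For the left-hand side, the quasi-morphism property together with $\lambda\pi=\iota_A$ gives $\lambda\mu_\Pi(\bar a,\bar e_{n+1},\mu_\Pi(\bar e_1,\ldots,\bar e_n;\bar b))\thicksim_{H(p)}\mu(a,e_{n+1},\lambda\mu_\Pi(\bar e_1,\ldots,\bar e_n;\bar b))$; Lemma \ref{itrt C2} gives $\lambda\mu_\Pi(\bar e_1,\ldots,\bar e_n;\bar b)\thicksim_{H_n(H(p))}\mu(e_1,\ldots,e_n;b)$; and (C1), moving only the last argument, then upgrades this to $\mu(a,e_{n+1},\lambda\mu_\Pi(\bar e_1,\ldots,\bar e_n;\bar b))\thicksim_{\rho(H_n(H(p)))}\mu(a,e_{n+1},\mu(e_1,\ldots,e_n;b))$. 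For the right-hand side, write $\bar f_i:=\mu_\Pi(\bar a,\bar e_{n+1},\bar e_i)$; Lemma \ref{itrt C2} applied to the $n$-fold iterated median with base $\bar b$ gives $\lambda\mu_\Pi(\bar f_1,\ldots,\bar f_n;\bar b)\thicksim_{H_n(H(p))}\mu(\lambda\bar f_1,\ldots,\lambda\bar f_n;b)$, while the quasi-morphism property gives $\lambda\bar f_i\thicksim_{H(p)}\mu(a,e_{n+1},e_i)$ for each $i$, whence Lemma \ref{itrt C1} yields $\mu(\lambda\bar f_1,\ldots,\lambda\bar f_n;b)\thicksim_{\rho_n(nH(p))}\mu(\mu(a,e_{n+1},e_1),\ldots,\mu(a,e_{n+1},e_n);b)$. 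Chaining these estimates through the exact identity proves the lemma with
\[
C_n:=H(p)+\rho\big(H_n(H(p))\big)+H_n(H(p))+\rho_n\big(nH(p)\big),\qquad p=n+3,
\]
which depends only on $\rho$, $H$ and $n$.

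There is no conceptual obstacle here; the content is essentially bookkeeping, and the key point is simply that the five-point-type identity becomes \emph{exact} once one is inside the median algebra $\Pi$. The only step that genuinely needs care is ensuring the approximating algebra can be taken with $\lambda\pi$ equal to the inclusion — this is exactly Lemma \ref{median rank assump} — so that $\lambda\pi a$, $\lambda\pi e_i$ and $\lambda\pi b$ are the original points on the nose rather than merely $H(p)$-close, which would otherwise introduce a further layer of error; secondarily, one must keep straight that Lemma \ref{itrt C1} is the tool for comparing two genuine iterated medians in $X$ with perturbed entries, whereas Lemma \ref{itrt C2} is the tool for commuting $\lambda$ past an iterated median computed in $\Pi$.
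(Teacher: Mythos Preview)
Your proof is correct, but it takes a genuinely different route from the paper's. The paper argues by induction on $n$: the base case $n=1$ is precisely the coarse five-point condition
\[
\mu(x,y,\mu(z,v,w)) \thicksim_{\kappav} \mu(\mu(x,y,z),\mu(x,y,v),w),
\]
and the inductive step unwinds the outer layer of the iterated median, applies this five-point estimate once, and then uses (C1) together with the inductive hypothesis, yielding $C_n=\rho(C_{n-1})+\kappav$. Your approach instead applies (C2) directly to the full $(n+3)$-point set, invokes the \emph{exact} identity of Lemma~\ref{itrt eqn} in the approximating median algebra, and transports it back via Lemmas~\ref{itrt C1} and~\ref{itrt C2}.

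Both are perfectly valid under the stated hypotheses. The trade-off is in the constants and in portability: the paper's inductive argument only ever invokes (C2) for five-point sets, so its $C_n$ depends on $\rho$ and $H(5)$ alone, whereas your $C_n$ depends on $H(n+3)$. This matters if one later wants the lemma under the simplified axioms (C0)$'$--(C2)$'$ of Theorem~\ref{simp.cma}, where the five-point estimate is available (via Proposition~\ref{strong-Zeidler}) but full (C2) for large sets is not yet established. Conversely, your argument is conceptually cleaner --- it makes transparent that the lemma is just the median-algebra identity viewed through the (C2) approximation --- and avoids any induction.
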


\begin{proof}
We prove this by induction on $n$.

The case $n=1$ reduces to establishing a coarse analogue of the median algebra five point condition which was established in \cite{bowditch2013coarse}, specifically there is a constant
$$\kappav = \rho(H(5))+\rho(2H(5))+2H(5)$$
such that:
$\forall x,y,z,v,w \in X$,
\begin{equation}\label{main est}
\mu(x,y,\mu(z,v,w)) \thicksim_\kappav \mu(\mu(x,y,z),\mu(x,y,v),w).
\end{equation}
So we can take $C_1=\kappav$.

Now assume that $n>1$ and the result holds for $n-1$. Then
\begin{eqnarray*}
&&\mu(a,e_{n+1},\mu(e_1,\ldots,e_n;b)) = \mu\big(a,e_{n+1},\mu(\mu(e_1,\ldots,e_{n-1};b),e_n,b)\big)\\
&\thicksim_{\kappav}& \mu\big(\mu(a,e_{n+1},\mu(e_1,\ldots,e_{n-1};b)),\mu(a,e_{n+1},e_n),b\big)\\
&\thicksim_{\rho(C_{n-1})}& \mu\big(\mu(\mu(a,e_{n+1},e_1),\ldots,\mu(a,e_{n+1},e_{n-1});b),\mu(a,e_{n+1},e_n),b\big)\\
&=& \mu(\mu(a,e_{n+1},e_1),\ldots,\mu(a,e_{n+1},e_n);b),
\end{eqnarray*}
where we use the inductive assumption in the third line. Set $C_n=\rho(C_{n-1})+\kappav$ and we are done.
\end{proof}

We note that this result still holds if we replace the affine control function $\rho$ by  an arbitrary increasing control function.

We conclude our consideration of iterated coarse medians with the following lemma.

\begin{lem}\label{itrt eqn coarse2}
Let $(X,d,\mu)$ be a coarse median space with parameters $\rho,H$, then there exists a constant $D_n$ depending on $\rho,H$ such that for any $a,b,c,e_1,\ldots,e_n\in X$,
$$\mu(a,b,\mu(e_1,\ldots,e_n;c))\thicksim_{D_n}\mu(\mu(a,b,e_1),\ldots,\mu(a,b,e_n);\mu(a,b,c)).$$
\end{lem}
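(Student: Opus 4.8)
The plan is to deduce the estimate from the corresponding \emph{exact} identity in a median algebra, transported back to $X$ via axiom (C2) together with Lemmas \ref{itrt C1} and \ref{itrt C2}, exactly in the spirit of the proof of Lemma \ref{itrt eqn coarse}. So I would first record that in a median algebra $(\Pi,\mu_{\Pi})$ one has
\[
\mu_{\Pi}(p,q,\mu_{\Pi}(x_1,\ldots,x_n;b))=\mu_{\Pi}(\mu_{\Pi}(p,q,x_1),\ldots,\mu_{\Pi}(p,q,x_n);\mu_{\Pi}(p,q,b))
\]
for all $p,q,b,x_1,\ldots,x_n$, and then apply (C2) to the finite set $A=\{a,b,c,e_1,\ldots,e_n\}$.

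The key observation for the median-algebra identity is that $r:=\mu_{\Pi}(p,q,\cdot)$, the gate projection onto the interval $[p,q]$, is a median homomorphism, i.e. $r(\mu_{\Pi}(x,y,z))=\mu_{\Pi}(rx,ry,rz)$; for the finite $\Pi$ we need this is transparent after embedding $\Pi$ as a subalgebra of a median cube $I^k$, where $r$ acts coordinatewise as the identity or as a constant, and it can alternatively be read off from the five point condition (which gives $r(\mu_{\Pi}(x,y,z))=\mu_{\Pi}(x,ry,rz)$, and then $\mu_{\Pi}(x,ry,rz)=\mu_{\Pi}(rx,ry,rz)$ since $ry,rz\in[p,q]$). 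A median homomorphism commutes with iterated medians — a short induction just like the proof of Lemma \ref{itrt eqn} — so applying $r$ with $p,q=\hat a,\hat b$ (writing $\hat x:=\pi x$) to $\mu_{\Pi}(\hat e_1,\ldots,\hat e_n;\hat c)$ gives precisely the displayed identity with $(p,q,b,x_k)=(\hat a,\hat b,\hat c,\hat e_k)$.

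For the transport, take the finite median algebra $\Pi$ and the $H(n+3)$-quasi-morphism $\lambda\colon\Pi\to X$ with $\lambda\pi=\iota_A$ supplied by (C2) for $A$ (so $|A|\leqslant n+3$). Applying $\lambda$ to the $\Pi$-identity above yields a single point of $X$; I would estimate the two expressions $\mu(a,b,\mu(e_1,\ldots,e_n;c))$ and $\mu(\mu(a,b,e_1),\ldots,\mu(a,b,e_n);\mu(a,b,c))$ against it separately. On the left, $\lambda\mu_{\Pi}(\hat e_1,\ldots,\hat e_n;\hat c)\thicksim_{H_n(H(n+3))}\mu(e_1,\ldots,e_n;c)$ by Lemma \ref{itrt C2}, and then an application of (C1) absorbs the outer $\mu_{\Pi}(\hat a,\hat b,\cdot)$. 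On the right, Lemma \ref{itrt C2} handles the $n$-fold iterated median, after which each $\lambda\mu_{\Pi}(\hat a,\hat b,\hat e_k)\thicksim_{H(n+3)}\mu(a,b,e_k)$ and $\lambda\mu_{\Pi}(\hat a,\hat b,\hat c)\thicksim_{H(n+3)}\mu(a,b,c)$, so Lemma \ref{itrt C1} finishes the job. Combining by the triangle inequality gives the lemma with a constant of the shape $D_n=H(n+3)+\rho(H_n(H(n+3)))+H_n(H(n+3))+\rho_n((n+1)H(n+3))$.

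The only genuinely non-formal ingredient is the median-algebra identity, i.e. that interval projections are homomorphisms compatible with iterated medians; this is classical but I would want to state it carefully (or prove it via the $I^k$ model) so as not to leave a gap. Everything else is the routine bookkeeping of control functions through Lemmas \ref{itrt C1} and \ref{itrt C2}, exactly as in the preceding proofs. One could instead try to argue by induction on $n$ using $(\ref{main est})$ and Lemma \ref{itrt eqn coarse}, but such an induction still bottoms out at the case $n=2$, which is precisely the assertion that $\mu(a,b,\cdot)$ is coarsely a homomorphism and which itself is obtained from (C2); so the direct transport seems the most economical route, and I would present it that way.
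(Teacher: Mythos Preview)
Your argument is correct: the interval projection $r=\mu_\Pi(p,q,\cdot)$ is indeed a median homomorphism (the five-point identity gives $r(m(x,y,z))=m(x,ry,rz)$, and applying the symmetric version $r(m(x,y,z))=m(rx,y,rz)$ with $y,z$ replaced by $ry,rz$ yields $m(rx,ry,rz)=r(m(x,ry,rz))=m(x,ry,rz)$), so it commutes with iterated medians, and the transport through (C2), Lemma~\ref{itrt C1} and Lemma~\ref{itrt C2} goes through exactly as you describe.

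However, this is \emph{not} the route the paper takes, and your closing remark mischaracterises the alternative. The paper argues by a direct induction on $n$ using only the coarse five-point estimate (\ref{main est}). The base case is $n=1$, which is the identity $\mu(a,b,e_1)=\mu(a,b,e_1)$ with $D_1=0$; there is no need to bottom out at $n=2$ or to invoke (C2) for large sets at any stage. The inductive step expands $\mu(e_1,\dots,e_n;c)=\mu(\mu(e_1,\dots,e_{n-1};c),e_n,c)$, then applies (\ref{main est}) twice (together with the idempotence $\mu(a,b,\mu(a,b,w))\thicksim_{\kappav}\mu(a,b,w)$) to distribute $\mu(a,b,\cdot)$ over all three arguments, and finally invokes the inductive hypothesis on the first slot. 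This yields $D_n=\rho(D_{n-1})+2\rho(\kappav)+2\kappav$.

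The practical difference is in the constants: the paper's $D_n$ depends only on $\rho$ and $\kappav$ (hence only on $H(5)$), whereas your $D_n$ involves $H(n+3)$, $H_n$, and $\rho_n$. Your approach is more conceptual and makes the underlying median identity explicit; the paper's is more self-contained and gives sharper dependence on the parameters, which is in keeping with the paper's later theme that everything is controlled by small-cardinality approximations.
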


\begin{proof}
The proof is by induction; the case $n=1$ is elementary setting $D_1=0$. Now assume $n>1$ and the lemma holds for $n-1$. Then
\begin{eqnarray*}
&&\mu(a,b,\mu(e_1,\ldots,e_n;c)) = \mu\big(a,b,\mu(\mu(e_1,\ldots,e_{n-1};c),e_n,c)\big)\\
&\thicksim_\kappav& \mu\left( a,b,\mu\big(a,b,\mu(\mu(e_1,\ldots,e_{n-1};c),e_n,c)\big) \right)\\
&\thicksim_{\rho(\kappav)}& \mu\left( a,b, \mu\big(\mu(a,b,\mu(e_1,\ldots,e_{n-1};c),\mu(a,b,e_n),c)\big)\right)\\
&\thicksim_{\kappav}& \mu\left(\mu(a,b,\mu(e_1,\ldots,e_{n-1};c), \mu(a,b,\mu(a,b,e_n)),\mu(a,b,c)\big)\right)\\
&\thicksim_{\rho(\kappav)}& \mu\left(\mu(a,b,\mu(e_1,\ldots,e_{n-1};c), \mu(a,b,e_n),\mu(a,b,c)\big)\right)\\
&\thicksim_{\rho(D_{n-1})}& \mu\left(\mu(\mu(a,b,e_1),\ldots,\mu(a,b,e_{n-1});\mu(a,b,c)), \mu(a,b,e_n),\mu(a,b,c)\big)\right)\\
&=&\mu(\mu(a,b,e_1),\ldots,\mu(a,b,e_n);\mu(a,b,c))
\end{eqnarray*}
where we use the inductive assumption in the sixth inequality. Set $D_n=\rho(D_{n-1})+2\rho(\kappav)+2\kappav$ and we are done.
\end{proof}

Again this result holds in the context of arbitrary control functions.

\subsection{(Coarse) intervals}\label{coarseintervals}
In CAT(0) cube complexes intervals play an important role, indeed the natural median is determined by the interval structure and vice versa. Similarly, in coarse median spaces, one needs to consider  coarse analogues of intervals. Some of these were introduced by Bowditch \cite{bowditch2014embedding}.

\begin{defn}
Given $(X,\mu)$ a set equipped with a ternary operator $\mu$, we define the \emph{interval} between points $x$ and $z$ to be:
$$[x,z]=\{\mu(x,y,z): y\in X\}.$$
\end{defn}

This should be contrasted with Bowditch's definition, \cite{bowditch2014embedding}, of the $\lambda$-\emph{coarse interval} between points $x$ and $z$ in a coarse median space $(X,d,\mu)$ as:
$$[x,z]_\lambda=\{y\in X: \mu(x,y,z)\thicksim_\lambda y\}.$$
Clearly $[x,z]_0\subseteq [x,z]$ and, as noted in Section \ref{medalg}, if $(X,\mu)$ is a median algebra then $[x,z]_0=[x,z]$, however these two notions of interval do not always coincide in a coarse median space. An example is provided in Section \ref{counterexample}.

In a CAT(0) cube complex the median of three points is always the unique point in the intersection of the three intervals they define. Bowditch \cite{bowditch2014embedding} showed the same result holds coarsely in a coarse median space. We adapt this to our notion of interval as follows (as usual the result works whether or not the control function is affine).

\begin{lem}\label{kappav}
Let $(X,d,\mu)$ be a coarse median space with parameters $\rho,H$. Then for any $x,y,z\in X$, $\mu(x,y,z)\in [x,y]_\kappavv$, i.e. $[x,y] \subseteq [x,y]_\kappavv$, where $\kappav$ is the constant $\rho(H(5))+\rho(2H(5))+2H(5)$ defined above.  In particular,
$$\mu(x,y,z) \in [x,y]_\kappavv \cap [y,z]_\kappavv \cap [x,z]_\kappavv.$$
\end{lem}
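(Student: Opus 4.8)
The plan is to unwind the definition of the coarse interval and reduce the claim to a single application of the coarse five point estimate \eqref{main est}. By definition, $\mu(x,y,z)\in[x,y]_{\kappavv}$ means exactly that $\mu(x,\mu(x,y,z),y)\thicksim_{\kappavv}\mu(x,y,z)$, and since we are assuming $\kappao=0$ (Remark \ref{median assump}) so that $\mu$ satisfies (M1) and (M2) on the nose, this is the same as showing
\[
\mu\big(x,y,\mu(x,y,z)\big)\thicksim_{\kappavv}\mu(x,y,z).
\]

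The one computation is to apply \eqref{main est}, which in the form stated reads $\mu(p,q,\mu(r,s,t))\thicksim_{\kappavv}\mu(\mu(p,q,r),\mu(p,q,s),t)$, with $p=x$, $q=y$, $r=x$, $s=y$, $t=z$. This gives
\[
\mu\big(x,y,\mu(x,y,z)\big)\thicksim_{\kappavv}\mu\big(\mu(x,y,x),\mu(x,y,y),z\big).
\]
Next I would simplify the right-hand side using (M1) and (M2) \emph{exactly}: $\mu(x,y,x)=\mu(x,x,y)=x$ and $\mu(x,y,y)=\mu(y,y,x)=y$, so the right-hand side is literally $\mu(x,y,z)$. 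Chaining the two displays proves $\mu(x,y,z)\in[x,y]_{\kappavv}$, i.e. $[x,y]\subseteq[x,y]_{\kappavv}$.

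For the ``in particular'' clause I would first observe that $[a,b]_\lambda=[b,a]_\lambda$ for all $a,b$ and all $\lambda$, which is immediate from (M2); then, since $\mu(x,y,z)=\mu(y,z,x)=\mu(z,x,y)$ by (M2), applying the first part to the permuted triples yields $\mu(x,y,z)\in[y,z]_{\kappavv}$ and $\mu(x,y,z)\in[z,x]_{\kappavv}=[x,z]_{\kappavv}$ as well.

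The argument is essentially bookkeeping, so I do not anticipate a genuine obstacle; the only delicate point is that the two collapsed medians $\mu(x,y,x)$ and $\mu(x,y,y)$ must equal $x$ and $y$ exactly, which is precisely why we arranged for $\mu$ to satisfy (M1) and (M2) on the nose rather than merely coarsely. Had one worked with the original operator, for which these axioms hold only up to $\kappao$, one would pick up an additional additive error controlled by $\rho$, and the stated constant would have to be enlarged accordingly.
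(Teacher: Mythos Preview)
Your proof is correct and is exactly the argument the paper has in mind: the lemma is stated without a written proof, being presented as an immediate consequence of the coarse five-point estimate~\eqref{main est} (attributed to Bowditch) with the specific constant $\kappav=\rho(H(5))+\rho(2H(5))+2H(5)$. Your substitution $p=x$, $q=y$, $r=x$, $s=y$, $t=z$ together with the exact axioms (M1) and (M2) is precisely how one extracts the inclusion $[x,y]\subseteq[x,y]_{\kappav}$ from~\eqref{main est}, and your treatment of the symmetry for the ``in particular'' clause is likewise the intended one.
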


Note that generally, the coarse interval $[a,b]_\kappavv$ is not closed under the coarse median $\mu$. However, we can endow it with another coarse median $\mu_{a,b}$ such that $[a,b]_\kappavv$ is closed under $\mu_{a,b}$, and $\mu_{a,b}$ is uniformly closed to $\mu$ as follows:
\begin{lem}\label{redefine cma on intvl}
Let $(X,d,\mu)$ be a coarse median space, and $\kappav$ be as defined above. Then there exists a constant $C>0$ depending  on the chosen parameters $\rho,H$ (and not on $(X,d,\mu)$ itself), such that for any $a,b \in X$ and $x,y,z\in [a,b]_\kappavv$, $\mu_{a,b}(x,y,z) := \mu(a,b,\mu(x,y,z))$ is $C$-close to $\mu(x,y,z)$. In conclusion, $\mu_{a,b}$ is indeed a coarse median on the coarse interval $[a,b]_\kappavv$, and $\mu_{a,b}$ is uniformly close to $\mu$. The same result holds if we use $[a,b]$ instead of $[a,b]_{\kappav}$.
\end{lem}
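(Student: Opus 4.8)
The plan is to isolate the one real computation — the estimate $\mu_{a,b}(x,y,z)=\mu(a,b,\mu(x,y,z))\thicksim_{C}\mu(x,y,z)$ for $x,y,z\in[a,b]_{\kappavv}$ — and then deduce everything else about $\mu_{a,b}$ from it together with the lemmas already established. For the estimate I would apply the coarse five point condition (\ref{main est}) with the substitution $(x,y,z,v,w)\mapsto(a,b,x,y,z)$, which gives $\mu(a,b,\mu(x,y,z))\thicksim_{\kappavv}\mu(\mu(a,b,x),\mu(a,b,y),z)$. Since $x,y\in[a,b]_{\kappavv}$ and $\mu$ satisfies (M2), we have $\mu(a,b,x)=\mu(a,x,b)\thicksim_{\kappavv}x$ and $\mu(a,b,y)\thicksim_{\kappavv}y$, so (C1) yields $\mu(\mu(a,b,x),\mu(a,b,y),z)\thicksim_{\rho(2\kappavv)}\mu(x,y,z)$. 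The triangle inequality then gives the claim with $C:=\kappavv+\rho(2\kappavv)$, a constant depending only on $\rho$ and $H$ (indeed only two of the three arguments need to lie in $[a,b]_{\kappavv}$).

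Next I would check that $\mu_{a,b}$ is actually an operator on, and a coarse median of, the interval. Closure holds because Lemma \ref{kappav} applied to the triple $a,b,w$ shows $\mu(a,b,w)\in[a,b]_{\kappavv}$ for every $w$, so $\mu_{a,b}$ maps $[a,b]_{\kappavv}^{3}$ into $[a,b]_{\kappavv}$; applied to $a,b,y$ the same lemma also gives $[a,b]\subseteq[a,b]_{\kappavv}$, so the estimate above holds verbatim over $[a,b]$, where closure is anyway immediate since $\mu_{a,b}(x,y,z)=\mu(a,\mu(x,y,z),b)$ is manifestly of the form $\mu(a,\cdot,b)$. The operator $\mu_{a,b}$ inherits (M2) exactly from $\mu$; it satisfies (M1) up to $\kappavv$, since $\mu_{a,b}(x,x,y)=\mu(a,b,x)\thicksim_{\kappavv}x$ for $x\in[a,b]_{\kappavv}$; and it satisfies (C1) with control function $\rho\circ\rho$ directly from (C1) for $\mu$.

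Finally, for (C2): given finite $A\subseteq[a,b]_{\kappavv}$ with $|A|\leqslant p$, take the approximating median algebra $(\Pi,\mu_\Pi)$ with maps $\pi,\lambda$ supplied by (C2) for $\mu$, normalised so that $\lambda\pi=\iota_A$ (Lemma \ref{median rank assump}), and replace $\lambda$ by $\lambda':=\mu(a,b,\lambda(\cdot))$, which maps $\Pi$ into $[a,b]_{\kappavv}$ by Lemma \ref{kappav}. Then $\lambda'\pi(a')=\mu(a,b,a')\thicksim_{\kappavv}a'$ for $a'\in A$, and for $x,y,z\in\Pi$, (C1) together with Lemma \ref{itrt eqn coarse2} at $n=2$ and the first-paragraph estimate give
$$\lambda'\mu_\Pi(x,y,z)\thicksim_{\rho(H(p))}\mu(a,b,\mu(\lambda x,\lambda y,\lambda z))\thicksim_{D_2}\mu(\lambda'x,\lambda'y,\lambda'z)\thicksim_{C}\mu_{a,b}(\lambda'x,\lambda'y,\lambda'z),$$
so $\lambda'$ is a quasi-morphism $(\Pi,\mu_\Pi)\to([a,b]_{\kappavv},\mu_{a,b})$ with uniformly controlled constant; this is (C2), and the $[a,b]$ version is identical since there $\lambda'$ automatically takes values in $[a,b]$. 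The first-paragraph estimate is exactly the assertion that $\mu_{a,b}$ is uniformly close to $\mu$, and since $\mu_{a,b}$ obeys (M1) and (M2) up to a uniform constant, Remark \ref{median assump} lets us, if we wish, replace it by a genuine coarse median operator on $[a,b]_{\kappavv}$ uniformly close to it. I do not expect any step to present a serious obstacle: the only identity used beyond the five point condition is the middle comparison in the display, which is precisely Lemma \ref{itrt eqn coarse2} with $n=2$, and the remaining effort is simply tracking constants.
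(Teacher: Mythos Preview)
Your proof is correct and the core computation is identical to the paper's: both apply the coarse five point condition (\ref{main est}) with the substitution $(x,y,z,v,w)\mapsto(a,b,x,y,z)$ and then use (C1) together with $x,y\in[a,b]_{\kappavv}$ to obtain the constant $C=\kappavv+\rho(2\kappavv)$, and both note that $\mu_{a,b}(x,y,z)\in[a,b]\subseteq[a,b]_{\kappavv}$. The paper stops there, treating the conclusion that $\mu_{a,b}$ is a coarse median as an immediate consequence of uniform closeness to $\mu$; your explicit verification of (C2) via $\lambda':=\mu(a,b,\lambda(\cdot))$ and Lemma~\ref{itrt eqn coarse2} is a welcome expansion of what the paper leaves implicit.
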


\begin{proof}
In the above setting, we have
$$\mu_{a,b}(x,y,z) = \mu(a,b,\mu(x,y,z)) \thicksim_\kappav\mu(\mu(a,b,x),\mu(a,b,y),z) \thicksim_{\rho(2\kappav)} \mu(x,y,z),$$
where in the last estimate we use that fact that $x,y\in [a,b]_{\kappav}$.

Finally, $\mu_{a,b}(x,y,z) = \mu(a,b,\mu(x,y,z))$ sits in $[a,b] \subseteq [a,b]_{\kappav}$.
\end{proof}

\section{The 4-point condition defines a coarse median space}\label{4pointcondition}
A fundamental difficulty with verifying that a space satisfies Bowditch's axioms is that one needs to establish approximations for subsets of arbitrary cardinality.  Here we will provide an alternative characterisation of coarse median spaces, for which one need only consider subsets of cardinality up to $4$.

Applying the axioms (C1), (C2), it is not hard to show that there is a constant $\kappaiv=2\rho(H(4))+2H(4)$ such that in any coarse median space $(X,d,\mu)$ with parameters $\rho,H$, for any points $a,b,c,d$ we have:
$$\mu(\mu(a,b,c),b,d) \thicksim_{\kappaiv} \mu(a,b, \mu(c,b,d)).$$
This provides a coarse analogue of Kolibiar's 4-point axiom (M3). We will provide the following converse:

\begin{thm}\label{simp.cma}
Let $(X,d)$ be a metric space, and $\mu\colon X^3 \rightarrow X$ a ternary operation. Then $\mu$ is a coarse median on $(X,d)$ (i.e., it satisfies conditions (C1) and (C2)) if and only if the following three conditions hold:
\begin{itemize}
  \item[(C0)' {\rm Coarse localisation and coarse symmetry:}] There is a constant $\kappao$ such that for all points $a_1,a_2,a_3$ in $X$, $\mu(a_1,a_1,a_2)\thicksim_{\kappao} a_1$, and $\mu(a_{\sigma(1)},a_{\sigma(2)},a_{\sigma(3)})\thicksim_{\kappao} \mu(a_1,a_2,a_3)$ for any permutation $\sigma$ of $\{1,2,3\}$;
  \item[(C1)' {\rm Affine control:}] There exists an affine function $\rho:[0,+\infty)\to [0,+\infty)$ such that for all $a,a',b,c\in X$,
      $$d(\mu(a,b,c), \mu(a',b,c)) \leqslant \rho(d(a,a'));$$
  \item[(C2)' {\rm Coarse 4-point condition:}] There exists a constant $\kappaiv>0$ such that for any $a,b,c,d\in X$, we have:
      $$\mu(\mu(a,b,c),b,d) \thicksim_{\kappaiv} \mu(a,b, \mu(c,b,d)).$$
  \end{itemize}
\end{thm}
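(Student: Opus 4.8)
The plan is to prove the two implications separately; the forward direction is essentially recorded already in the text preceding the statement, while the reverse direction carries all of the work.

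\textbf{Forward direction.} Assume $\mu$ satisfies (C1) and (C2). Then (C0)$'$ is exactly Remark~\ref{median assump}, and (C1)$'$ is the special case of (C1) in which $b$ and $c$ are held fixed. For (C2)$'$, apply (C2) to the four-point set $A=\{a,b,c,d\}$, with the approximating median algebra $(\Pi,\mu_\Pi)$ generated by $\pi(A)$ and $\lambda\pi=\iota_A$. In $\Pi$ the honest identity (M3) holds, so $\mu_\Pi(\mu_\Pi(\pi a,\pi b,\pi c),\pi b,\pi d)=\mu_\Pi(\pi a,\pi b,\mu_\Pi(\pi c,\pi b,\pi d))$; transporting both sides back to $X$ via the quasi-morphism estimate of (C2), and using (C1) to absorb the error from replacing $\mu(a,b,c)$ by $\lambda\mu_\Pi(\pi a,\pi b,\pi c)$ (and similarly on the other side), gives $\mu(\mu(a,b,c),b,d)\thicksim_{\kappaiv}\mu(a,b,\mu(c,b,d))$ with $\kappaiv=2\rho(H(4))+2H(4)$.

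\textbf{Reverse direction, getting (C1).} First upgrade (C1)$'$ to (C1). Using coarse symmetry (C0)$'$ to move whichever argument is being perturbed into the first slot, (C1)$'$ controls perturbations of \emph{any} single argument of $\mu$; telescoping over the three arguments yields an affine control function for $d(\mu(a,b,c),\mu(a',b',c'))$, at the cost of adding a bounded multiple of $\kappao$. Iterating the same device shows that perturbing a single leaf of an $n$-fold iterated median alters its value by at most an $n$-fold composite of $\rho$ applied to the perturbation (an analogue of Lemma~\ref{itrt C1}); this is what lets us apply a rewrite deep inside a median term at a cost bounded in terms of the term's depth.

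\textbf{Reverse direction, getting (C2) --- the heart of the proof.} Given a finite $A\subseteq X$ with $|A|=p$, take $\Pi$ to be the free median algebra $F_A$ on the set $A$, which is finite; $\pi\colon A\to F_A$ the canonical map onto the generators; and $\lambda\colon F_A\to X$ the ``evaluation'' map, defined on each element of $F_A$ by choosing a canonical median term representing it and evaluating that term in $X$ using $\mu$. Both clauses of (C2) reduce to a single coarse well-definedness statement: any two median terms in $p$ variables of complexity bounded by a function of $p$ that represent the same element of $F_A$ evaluate, under $\mu$, to points within a distance depending only on $p$ and the parameters $\kappao,\rho,\kappaiv$. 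Since the variety of median algebras is presented by (M1)--(M3), two such terms are joined by a finite chain of rewrites using instances of (M1), (M2), (M3), and each single rewrite costs a bounded error by (C0)$'$, (C2)$'$ and the depth-wise substitution estimate above. The delicate point --- and the main obstacle --- is the purely quantitative one of bounding the \emph{number} of rewrite steps and the \emph{sizes} of the intermediate terms solely in terms of $p$; this is exactly where one needs an explicit model of the free median algebra on $n$ points, supplying canonical normal forms and an a priori bound $N(p)$ on $|F_A|$, so that the rewriting can be confined to terms of bounded size and the accumulated error controlled uniformly. Granting the well-definedness lemma, $\lambda$ is an $H(p)$-quasi-morphism for a suitable $H$, $\lambda\pi=\iota_A$ by construction, and $\Pi=F_A$ is generated by $\pi(A)$, so (C2) holds and $\mu$ is a coarse median.
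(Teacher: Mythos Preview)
Your proposal is correct and follows essentially the same architecture as the paper: the free median algebra on $p$ generators as the approximating $\Pi$, evaluation of chosen term representatives as $\lambda$, and the core lemma that two median terms representing the same element evaluate to uniformly close points in $X$ (the paper's Proposition~\ref{strong-Zeidler}), proved by induction over a chain of elementary rewrites corresponding to (M1)--(M3).

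One small point of comparison on the ``delicate point'' you flag. You frame it as needing a uniform bound on the number and size of rewrite steps for all pairs of terms of complexity bounded in $p$. The paper sidesteps this: Proposition~\ref{strong-Zeidler} produces a constant $R(\varphi,\psi)$ depending on the \emph{specific} identity (via one fixed chain of elementary transformations), with no attempt at uniformity in the complexity. The finiteness of $\M_p$ is then used only at the very end: having chosen once and for all a finite set $\Theta_p$ of representatives, one needs just the finitely many identities $\langle\varphi_1\varphi_2\varphi_3\rangle=\psi$ with $\varphi_i,\psi\in\Theta_p$, and $H(p)$ is simply the maximum of those finitely many $R$'s. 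So no uniform rewriting bound is required; only that $\M_p$ is finite (which the paper cites from \cite{van1993theory}) and that each individual identity holds coarsely. Your route would also work, but is a little harder than necessary.
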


We note that if we are interested in generalising coarse median spaces to allow arbitrary control functions then the affine control axiom (C1)' should be replaced by the requirement that the map $a\mapsto \mu(a,b,c)$ is bornologous uniformly in $b,c$.

Free median algebras will play a crucial role in the proof and next we will give a concrete construction of the free median algebra on $p$ points  as a quotient of the space of formal ternary expressions on $p$ variables. Under the name of formal median expressions these were originally introduced by Zeidler in \cite{zeidler2013coarse}.

Throughout this section, we fix an integer $p>0$ and variables $\alpha_1,\ldots,\alpha_p$.

\subsection{Formal ternary expressions and formal median identities}\label{formal}
We introduce a model to define formal ternary expressions on the alphabet $\Omega_p=\{\alpha_1,\ldots,\alpha_p\}$. We introduce two additional symbols: $"\ml"$ and $"\mr"$, and set $\widetilde{\Omega}_p=\Omega_p \cup \{~\ml~,~\mr~\}$. Let  $\widetilde{\Omega}_p^\star$ be the set of all finite words in $\widetilde{\Omega}_p$.

\begin{defn}
Define $\mathfrak{M}_p$ to be the unique smallest subset $A$ in $\widetilde{\Omega}_p^\star$ containing $\Omega_p$ and satisfying: for any $\varphi_1,\varphi_2,\varphi_3\in A$, we have $\ml\varphi_1\varphi_2\varphi_3\mr\in A$. Elements in $\mathfrak{M}_p$ are called \emph{formal ternary expressions in variables} $\alpha_1,\ldots,\alpha_p$, or \emph{formal ternary expressions with} $p$ \emph{variables}. Note that $\mathfrak{M}_p$ carries a natural ternary operation:
\[ (\varphi_1, \varphi_2, \varphi_3)\mapsto \langle \varphi_1\varphi_2\varphi_3\rangle.
\]
\end{defn}

\begin{lem}\label{basic fme}
For any $\varphi \in \mathfrak{M}_p \setminus \Omega_p$, there exist unique $\varphi_1,\varphi_2,\varphi_3\in \mathfrak{M}_p$ such that $\varphi=\ml\varphi_1\varphi_2\varphi_3\mr$.
\end{lem}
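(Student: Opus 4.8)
The statement splits into an existence claim (the decomposition $\varphi=\ml\varphi_1\varphi_2\varphi_3\mr$ exists) and a uniqueness claim (unique readability of such an expression). For existence I would exploit the minimality built into the definition of $\mathfrak M_p$. Set $B:=\Omega_p\cup\{\,\ml\psi_1\psi_2\psi_3\mr:\psi_1,\psi_2,\psi_3\in\mathfrak M_p\,\}$. Then $B\subseteq\mathfrak M_p$, $B$ contains $\Omega_p$, and $B$ is closed under the ternary bracket operation (if $\chi_1,\chi_2,\chi_3\in B$ then in particular $\chi_i\in\mathfrak M_p$, so $\ml\chi_1\chi_2\chi_3\mr\in B$); hence by minimality $B=\mathfrak M_p$. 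Since every non-variable element of $B$ begins with the symbol $\ml$ while no $\alpha_i$ does, the two pieces of $B$ are disjoint, so each $\varphi\in\mathfrak M_p\setminus\Omega_p$ has the required form. Alongside this I would record, by the same structural induction, the trivial facts that every element of $\mathfrak M_p$ is a nonempty word and that a non-variable element begins with $\ml$; both are used below.

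The uniqueness is the crux, and I would reduce it to a prefix-freeness lemma: \emph{if $\varphi,\psi\in\mathfrak M_p$ and one of them is an initial segment of the other, then $\varphi=\psi$}. I would prove this by strong induction on $|\varphi|+|\psi|$, assuming without loss of generality that $\varphi$ is a prefix of $\psi$. If either word is a single variable, comparing first letters forces $\varphi=\psi$ (a variable has length $1$ and does not begin with $\ml$). Otherwise write $\varphi=\ml\varphi_1\varphi_2\varphi_3\mr$ and $\psi=\ml\psi_1\psi_2\psi_3\mr$ using the existence part already proved; deleting the leading $\ml$ shows $\varphi_1\varphi_2\varphi_3\mr$ is a prefix of $\psi_1\psi_2\psi_3\mr$, so $\varphi_1$ and $\psi_1$ are both prefixes of the word $\psi_1\psi_2\psi_3\mr$ and therefore comparable. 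Each component is a proper, hence strictly shorter, factor of the expression containing it, so $|\varphi_1|+|\psi_1|<|\varphi|+|\psi|$ and the inductive hypothesis gives $\varphi_1=\psi_1$; cancelling and repeating the argument twice more yields $\varphi_2=\psi_2$ and $\varphi_3=\psi_3$, whence $\varphi=\psi$. Granting this lemma, the uniqueness in the statement is immediate: from $\ml\varphi_1\varphi_2\varphi_3\mr=\ml\psi_1\psi_2\psi_3\mr$ one gets $\varphi_1\varphi_2\varphi_3=\psi_1\psi_2\psi_3$ as words, the lemma applied to the comparable prefixes $\varphi_1,\psi_1$ gives $\varphi_1=\psi_1$, and cancelling repeatedly gives $\varphi_2=\psi_2$ and $\varphi_3=\psi_3$.

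The only real obstacle is organisational: keeping the induction in the prefix-freeness lemma well-founded (it is, since each $\varphi_i$ is strictly shorter than $\varphi$) and using correctly the elementary fact that two words which are both prefixes of a common word are comparable. An alternative, equally routine, proof of the lemma uses the bracket-balance function $h(w)$ equal to the number of occurrences of $\ml$ in $w$ minus the number of occurrences of $\mr$: one checks by structural induction that $h(\varphi)=0$ for every $\varphi\in\mathfrak M_p$ whereas every nonempty proper prefix of a non-variable element of $\mathfrak M_p$ has $h\geq 1$, so no nonempty proper prefix of an element of $\mathfrak M_p$ can itself lie in $\mathfrak M_p$. Either way this is the standard unique-readability argument, adapted to ternary rather than binary bracketing.
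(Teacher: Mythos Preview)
Your proof is correct and follows essentially the same approach as the paper: existence via minimality of $\mathfrak{M}_p$, and uniqueness via the prefix-freeness lemma proved by induction on word length (you induct on $|\varphi|+|\psi|$, the paper on $|\varphi|$ with $\psi$ a prefix of $\varphi$, but the content is identical). Your exposition is slightly more careful in explicitly constructing the set $B$ and noting the bracket-balance alternative, but there is no substantive difference.
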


\begin{proof}
Each word in $\mathfrak{M}_p \setminus \Omega_p$ has the form $\ml\varphi_1\varphi_2\varphi_3\mr$ since otherwise we can delete the words not satisfying this condition to get a smaller set, which is a contradiction to the minimality of $\mathfrak{M}_p$.

Now we claim: for any $\varphi,\psi \in \mathfrak{M}_p$, if $\psi$ is a prefix of $\varphi$ (i.e. there exists a word $w\in \widetilde{\Omega}_p^\star$ such that $\varphi=\psi w$), then they are equal. If the claim holds then it follows that each $\varphi \in \mathfrak{M}_p \setminus \Omega_p$ can be written uniquely in the form $\ml\varphi_1\varphi_2\varphi_3\mr$. Indeed, assume $\varphi=\ml\varphi_1\varphi_2\varphi_3\mr = \ml\psi_1\psi_2\psi_3\mr$ for $\varphi_i, \psi_i \in \mathfrak{M}_p$. Since $\psi_1$ is a prefix of $\varphi_1$ or vice versa, so by the claim, we have $\psi_1=\varphi_1$; similarly, we have $\psi_2=\varphi_2$ and $\psi_3=\varphi_3$.

Now we prove the claim by induction on the word length of $\varphi$. The claim holds trivially for any $\varphi$ of word length $1$, i.e. $\varphi\in \Omega_p$. Assume that $\varphi$ has word length $n$ and that for any word of length less than $n$ the claim holds. Suppose that $\psi$ is a prefix of $\varphi$, and assume $\psi=\ml\psi_1\psi_2\psi_3\mr$ and $\varphi=\ml\varphi_1\varphi_2\varphi_3\mr$ for $\varphi_i, \psi_i \in \mathfrak{M}_p$. As $\psi$ is a prefix of $\varphi$, we know $\psi_1$ is a prefix of $\varphi_1$ or vice versa. Inductively we have $\psi_1=\varphi_1$. Similarly, we have $\psi_2=\varphi_2$ and $\psi_3=\varphi_3$, so $\psi=\varphi$, as required.
\end{proof}

\begin{defn}
For each formal ternary expression $\varphi \in \mathfrak{M}_p$, we can associate a natural number $\xi(\varphi)$ to it inductively as follows, which is called the \emph{complexity} of $\varphi$:
\begin{itemize}
  \item If $\varphi \in \Omega_p$, i.e. $\varphi=\alpha_i$ for some $i$, we define $\xi(\alpha_i)=0$;
  \item Assume we have defined complexities for all the formal ternary expressions with length (as a word in $\widetilde{\Omega}_p^\star$) less than $n>1$, then for $\varphi$ with length $n$, by Lemma \ref{basic fme}, there exists unique $\varphi_1,\varphi_2,\varphi_3 \in \mathfrak{M}_p$ such that $\varphi=\ml\varphi_1\varphi_2\varphi_3\mr$. Since each $\varphi_i$ has length less than $n$, $\xi(\varphi_i)$ has already been defined. Now we define the complexity $\xi(\varphi):= \max\{\xi(\varphi_1),\xi(\varphi_2),\xi(\varphi_3)\}+1$.
\end{itemize}
Let $\mathfrak{M}_p(n) \subseteq \mathfrak{M}_p$ be the set of all formal ternary expressions having complexity less than or equal to $n$, and $\mathfrak{S}_p(n)=\mathfrak{M}_p(n) \setminus \mathfrak{M}_p(n-1)$ be the set of all formal ternary expressions having complexity equal to $n$ (set $\mathfrak{S}_p(0)=\mathfrak{M}_p(0)$).
\end{defn}

\begin{rem}
It's obvious by definition that
$$\mathfrak{M}_p=\bigcup_{n=0}^\infty \mathfrak{M}_p(n) = \bigsqcup_{n=0}^\infty \mathfrak{S}_p(n),$$
and, by an easy inductive argument, $\mathfrak{M}_p(n)$ is finite for each $n$. Lemma \ref{basic fme} says that for any formal ternary expression $\varphi \notin \mathfrak{M}_p(0)$, there exist unique formal ternary expressions $\varphi_1,\varphi_2,\varphi_3\in \mathfrak{M}_p$ with $\xi(\varphi_i)<\xi(\varphi)$ and $\varphi=\ml\varphi_1\varphi_2\varphi_3\mr$.
\end{rem}

\begin{lem}\label{free}
Let $Y$ be a set with ternary operation $\mu$ and $\mathfrak{M}_p$ be the set of formal ternary expressions on the alphabet $\Omega_p$. Then any map $\Omega_p\rightarrow Y$ extends uniquely to a map of ternary algebras $\mathfrak{M}_p\rightarrow Y$. Thus $\mathfrak{M}_p$ is the free ternary algbera on $p$ variables.
\end{lem}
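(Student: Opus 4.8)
The plan is to prove the universal property for $\mathfrak{M}_p$ as the free ternary algebra by defining the extension recursively on the complexity $\xi$ and then checking that any extension must satisfy the recursion, forcing uniqueness. Concretely, suppose $f\colon \Omega_p \to Y$ is given. I define $\tilde f \colon \mathfrak{M}_p \to Y$ by induction on the complexity $\xi(\varphi)$: on $\mathfrak{M}_p(0)=\Omega_p$ set $\tilde f(\alpha_i) := f(\alpha_i)$; and if $\varphi$ has complexity $n>0$, then by Lemma \ref{basic fme} (and the remark following) there are unique $\varphi_1,\varphi_2,\varphi_3 \in \mathfrak{M}_p$ with $\xi(\varphi_i)<\xi(\varphi)$ and $\varphi=\ml\varphi_1\varphi_2\varphi_3\mr$; so $\tilde f(\varphi_i)$ has already been defined, and we set $\tilde f(\varphi) := \mu(\tilde f(\varphi_1),\tilde f(\varphi_2),\tilde f(\varphi_3))$. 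This is well defined precisely because the decomposition $\varphi=\ml\varphi_1\varphi_2\varphi_3\mr$ is unique.

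Next I would verify that $\tilde f$ is a homomorphism of ternary algebras, i.e. $\tilde f(\ml\psi_1\psi_2\psi_3\mr)=\mu(\tilde f(\psi_1),\tilde f(\psi_2),\tilde f(\psi_3))$ for all $\psi_1,\psi_2,\psi_3\in\mathfrak{M}_p$. But this is immediate: $\ml\psi_1\psi_2\psi_3\mr$ is an element of $\mathfrak{M}_p\setminus\Omega_p$ whose unique decomposition (by Lemma \ref{basic fme}) has first, second and third components $\psi_1,\psi_2,\psi_3$, so the defining recursion for $\tilde f$ gives exactly $\mu(\tilde f(\psi_1),\tilde f(\psi_2),\tilde f(\psi_3))$. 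Also $\tilde f$ extends $f$ by construction on $\Omega_p$.

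For uniqueness, suppose $g\colon\mathfrak{M}_p\to Y$ is any homomorphism of ternary algebras with $g|_{\Omega_p}=f$. I show $g=\tilde f$ by induction on complexity: on $\Omega_p$ they agree by hypothesis; and if $\xi(\varphi)=n>0$ with unique decomposition $\varphi=\ml\varphi_1\varphi_2\varphi_3\mr$, then $g(\varphi)=\mu(g(\varphi_1),g(\varphi_2),g(\varphi_3))=\mu(\tilde f(\varphi_1),\tilde f(\varphi_2),\tilde f(\varphi_3))=\tilde f(\varphi)$, using the inductive hypothesis at the middle step since each $\xi(\varphi_i)<n$. Hence $g=\tilde f$, and $\mathfrak{M}_p$ is the free ternary algebra on $p$ variables.

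There is essentially no serious obstacle here; the entire content is already packaged in Lemma \ref{basic fme}, which guarantees that every non-variable expression has a \emph{unique} decomposition into its three immediate subexpressions, so that the recursion defining $\tilde f$ is unambiguous and the induction on complexity terminates. The only point requiring a little care is to phrase both the definition of $\tilde f$ and the uniqueness argument as inductions on $\xi$ rather than on word length, and to invoke the remark after the last displayed formula (that the subexpressions $\varphi_i$ have strictly smaller complexity) to ensure the recursion is well-founded.
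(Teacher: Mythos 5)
Your proof is correct and follows essentially the same route as the paper: define the extension by induction on complexity, relying on the unique decomposition from Lemma \ref{basic fme} for well-definedness, and prove uniqueness by the same induction. You spell out the homomorphism verification and uniqueness step a bit more explicitly than the paper does, but there is no substantive difference in approach.
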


\begin{proof}
Let $y_1,\ldots,y_p \in Y$ be the images of the variables $\{\alpha_1, \ldots, \alpha_p\}$ in $\Omega_p$. Given $\varphi\in \mathfrak{M}_p$ we define an element $\varphi_Y(y_1,\ldots,y_p)$ inductively as follows:
\begin{enumerate}
  \item For $\varphi=\alpha_i\in \mathfrak{M}_p(0)$, define $\varphi_Y(y_1,\ldots,y_p)=y_i$;
  \item If $\varphi=\ml\varphi_1\varphi_2\varphi_3\mr\in \mathfrak{M}_p(n), n>0$, $\varphi_1, \varphi_2,\varphi_3$ all lie in $\mathfrak{M}_p(n-1)$ and we define: $$\varphi_Y(y_1,\ldots,y_p)=\mu\big((\varphi_1)_Y(y_1,\ldots,y_p),(\varphi_2)_Y(y_1,\ldots,y_p),(\varphi_3)_Y(y_1,\ldots,y_p)\big).$$
\end{enumerate}
By construction the map $\varphi\mapsto \varphi_Y(y_1, \ldots, y_p)$ is a map of ternary algebras extending the map from $\Omega_p$ to $Y$ as required.

Conversely, the condition that we have a map of ternary algebras extending the original map implies that $\varphi_Y$ must satisfy conditions $1$ and $2$ above yielding uniqueness.
\end{proof}

Let $Y$ be a set with ternary operation $\mu$, and $y_1,\ldots,y_p \in Y$. Given a formal ternary expression $\varphi$ on $p$ variables, we define its evaluation at $(y_1,\ldots,y_p)$ to be the image $\varphi_Y(y_1, \ldots, y_p)$ of $\varphi$ provided by Lemma \ref{free}. We call the corresponding map $\varphi_Y:Y^p\rightarrow Y$  the \emph{realisation of $\varphi$}.

Given two formal ternary expressions $\varphi ,\psi \in \mathfrak{M}_p$, we say that the equation $\varphi=\psi$ is a \emph{formal median identity}, if it holds in any median algebra. To be more precise, for any median algebra $(Y,\mu)$ and $y_1,\ldots,y_p\in Y$, we have:
\[\varphi_Y(y_1,\ldots,y_p)=\psi_Y(y_1,\ldots,y_p).
\]
It is easy to see that the axioms for a median operator immediately lead to the following formal median identities:
$$
\left.
   \begin{array}{rclcl}
      \varphi &=& \langle \varphi\varphi\psi\rangle, &\quad& \forall \varphi,\psi\in \mathfrak{M}_p;\\
      \ml\varphi_1\varphi_2\varphi_3\mr &=& \ml\varphi_{\sigma(1)}\varphi_{\sigma(2)}\varphi_{\sigma(3)}\mr, &\quad& \forall  \varphi_1,\varphi_2,\varphi_3\in \mathfrak{M}_p \text{ and } \sigma \in S_3;\\
      \ml\ml\varphi_1\varphi_2\varphi_3\mr\varphi_2\varphi_4\mr &=&\ml\varphi_1\varphi_2\ml\varphi_3\varphi_2\varphi_4\mr\mr, &\quad& \forall \varphi_1,\ldots,\varphi_4\in \mathfrak{M}_p.
   \end{array}
\right.
$$
Two formal ternary expressions are said to be \emph{equivalent}, written $\varphi \thicksim_{med} \psi$, if $\varphi=\psi$ is a formal median identity. Clearly this is an equivalence relation on $\mathfrak{M}_p$. Now suppose that $\varphi_1=\psi_1, \varphi_2=\psi_2, \varphi_3=\psi_3$ are formal median identities. Then $\ml\varphi_1\varphi_2\varphi_3\mr = \ml\psi_1\psi_2\psi_3\mr$ is also a formal median identity, so the ternary operator on  $\mathfrak{M}_p$ descends to a ternary operator on the quotient of $\mathfrak{M}_p$ by the equivalence relation $\thicksim_{med}$. Moreover this operator makes the quotient itself a median algebra as it now satisfies the median axioms. Given any other median algebra $(Y, \mu)$ and points $y_1,\ldots,y_p\in Y$ the universal map of ternary algebras from $\mathfrak{M}_p$ to $Y$ factors through the quotient by definition of the formal median identities, so we have shown that:
\begin{lem}The quotient $\mathfrak{M}_p/\thicksim_{med}$ is the free median algebra on $p$ points.
\end{lem}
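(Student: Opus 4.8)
The plan is to carry out the standard ``quotient by a congruence'' argument: show that $\thicksim_{med}$ is a congruence on the free ternary algebra $\mathfrak{M}_p$, deduce that the quotient $Q:=\mathfrak{M}_p/\thicksim_{med}$ is a median algebra, and then read off the universal property directly from Lemma~\ref{free}.

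First I would check that $\thicksim_{med}$ is compatible with the bracket operation, so that it is a congruence and not merely an equivalence relation. If $\varphi_i\thicksim_{med}\psi_i$ for $i=1,2,3$, then for every median algebra $(Y,\mu)$ and all $y_1,\ldots,y_p\in Y$ one has $(\varphi_i)_Y(y_1,\ldots,y_p)=(\psi_i)_Y(y_1,\ldots,y_p)$; applying $\mu$ and using the inductive definition of the realisation in Lemma~\ref{free} gives $\ml\varphi_1\varphi_2\varphi_3\mr_Y=\ml\psi_1\psi_2\psi_3\mr_Y$, i.e. $\ml\varphi_1\varphi_2\varphi_3\mr\thicksim_{med}\ml\psi_1\psi_2\psi_3\mr$. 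Hence the ternary operation descends to a well-defined operation $\bar\mu$ on $Q$, with quotient map $q\colon\mathfrak{M}_p\to Q$ a homomorphism of ternary algebras. The three formal median identities displayed just above the statement then say precisely that $\bar\mu$ satisfies (M1), (M2), (M3) on $q(\mathfrak{M}_p)=Q$, so $(Q,\bar\mu)$ is a median algebra; moreover it is generated by the classes of $\alpha_1,\ldots,\alpha_p$ because $\mathfrak{M}_p$ is generated by $\Omega_p$.

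Next I would verify the universal property. Given a median algebra $(Y,\mu)$ and $y_1,\ldots,y_p\in Y$, Lemma~\ref{free} supplies the unique ternary-algebra homomorphism $\Phi\colon\mathfrak{M}_p\to Y$ sending $\alpha_i\mapsto y_i$, namely $\varphi\mapsto\varphi_Y(y_1,\ldots,y_p)$. By the very definition of $\thicksim_{med}$, $\varphi\thicksim_{med}\psi$ implies $\varphi_Y(y_1,\ldots,y_p)=\psi_Y(y_1,\ldots,y_p)$, so $\Phi$ is constant on $\thicksim_{med}$-classes and factors as $\Phi=\bar\Phi\circ q$ for a unique map $\bar\Phi\colon Q\to Y$; since $q$ is surjective, $\bar\Phi$ inherits the property of being a ternary-algebra homomorphism, which between median algebras is the same thing as a median homomorphism. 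Uniqueness of $\bar\Phi$ with $\bar\Phi(q(\alpha_i))=y_i$ follows from surjectivity of $q$ together with the uniqueness clause of Lemma~\ref{free}. This is exactly the assertion that $Q$ is the free median algebra on $p$ points.

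I do not expect a genuine obstacle: the whole argument is routine universal algebra. The only points needing care are (i) confirming that $\thicksim_{med}$ is a congruence rather than just an equivalence relation, and (ii) being explicit that a ternary-algebra homomorphism between two median algebras automatically preserves the median, so that the factored map $\bar\Phi$ is a morphism in the right category. If one also wants the $p$ generators to be distinct in $Q$ (not needed for the statement), this follows by evaluating in the median cube $I^p$ with $\alpha_i\mapsto e_i$.
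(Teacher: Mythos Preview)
Your proposal is correct and follows essentially the same route as the paper: the paper's argument (given in the paragraph immediately preceding the lemma) also checks that $\thicksim_{med}$ is a congruence, observes that the displayed identities give the median axioms on the quotient, and then factors the universal ternary map of Lemma~\ref{free} through the quotient to obtain the universal property. Your write-up is more explicit on the points (i) and (ii) you flag, but there is no substantive difference in strategy.
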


In the case of coarse median spaces the median identities will only hold up to bounded error. To quantify the errors we need to express the equivalence relation $\thicksim_{med}$ explicitly, which we do by describing it in terms of elementary transformations which admit metric control.

\subsection{Elementary transformations}

We define \emph{elementary transformations} inductively as follows:
\begin{itemize}
  \item For $\varphi \in \mathfrak{S}_p(0)$, the only allowed elementary transformations are of the form  $\varphi \mapsto \ml\varphi\varphi\psi\mr$ for some $\psi \in \mathfrak{M}_p$;
  \item Suppose for elements in $\mathfrak{M}_p(n-1)$, we have defined all the allowed elementary transformations. For $\varphi \in \mathfrak{S}_p(n)$ we define the allowed elementary transformations of $\varphi$ as follows:
      \begin{itemize}
        \item[\emph{Type I).}]  $\varphi \mapsto \ml\varphi\varphi\psi\mr$ for any $\psi$ in $\mathfrak{M}_p$; if $\varphi=\ml\psi\psi\varphi'\mr$, then $\varphi \mapsto \psi$;
        \item[\emph{Type II).}] If $\varphi=\ml\varphi_1\varphi_2\varphi_3\mr$, then $\varphi \mapsto \ml\varphi_{\sigma(1)}\varphi_{\sigma(2)}\varphi_{\sigma(3)}\mr$ for some $\sigma \in S_3$;
        \item[\emph{Type III).}] If $\varphi=\ml\ml\varphi_1\varphi_2\varphi_3\mr\varphi_2\varphi_4\mr$, then $\varphi \mapsto \ml\varphi_1\varphi_2\ml\varphi_3\varphi_2\varphi_4\mr\mr$; \\
            or if $\varphi=\ml\varphi_1\varphi_2\ml\varphi_3\varphi_2\varphi_4\mr\mr$, then $\varphi \mapsto \ml\ml\varphi_1\varphi_2\varphi_3\mr\varphi_2\varphi_4\mr$;
        \item[\emph{Type IV).}] If $\varphi=\ml\varphi_1\varphi_2\varphi_3\mr$, and $\varphi_1'$ is obtained from $\varphi_1$ via an elementary transformation, then $\varphi \mapsto \ml\varphi_1'\varphi_2\varphi_3\mr$ (note that $\varphi_1 \in \mathfrak{M}_p(n-1)$, so the elementary transformations of $\varphi_1$ have already been defined).
      \end{itemize}
\end{itemize}
If $\psi$ is obtained from $\varphi$ by a single elementary transformation we write $\varphi \thicksim_{ET} \psi$. It is easy to see that the relation $\thicksim_{ET}$ is symmetric, i.e. $\varphi \thicksim_{ET} \psi$ if and only if $\psi \thicksim_{ET} \varphi$.

\subsection{A construction for the free median algebra}
Let $\thicksim$ be the transitive closure of $\thicksim_{ET}$ on the set $\mathfrak{M}_p$ of formal ternary expressions. The closure of $\thicksim_{ET}$ is an equivalence relation; we denote the quotient set by $\M_p=\mathfrak{M}_p/\thicksim$, and use $\ql\varphi\qr$ to denote the equivalence class of $\varphi$ in $\M_p$. Furthermore, we define an induced ternary operator $\m$ on $\M_p$ as follows:
$$\m(\ql\varphi_1\qr,\ql\varphi_2\qr,\ql\varphi_3\qr):=\ql\ml\varphi_1\varphi_2\varphi_3\mr\qr$$
for $\varphi_i \in \mathfrak{M}_p$.

We will show that $(\M_p,\m)$ is the free median algebra generated by $[\alpha_1], \ldots, [\alpha_p]$.

\begin{lem}
$\m$ is a well-defined median operator on $\M_p$.
\end{lem}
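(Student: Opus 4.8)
The plan is to check two things: that $\m$ is \emph{well-defined}, i.e. that the transitive closure $\thicksim$ is a congruence for the ternary operation $(\varphi_1,\varphi_2,\varphi_3)\mapsto\ml\varphi_1\varphi_2\varphi_3\mr$ on $\mathfrak{M}_p$; and that the resulting operation $\m$ on $\M_p$ satisfies the median axioms (M1), (M2) and (M3). Both will follow directly from the fact that the four families of elementary transformations were designed to encode, respectively, axiom (M1), axiom (M2), axiom (M3), and closure of the relation under passing to subexpressions.

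For well-definedness, by transitivity of $\thicksim$ it is enough to show that a single elementary transformation applied to one of the three arguments induces a move in $\thicksim_{ET}$ of the whole expression; I would treat each coordinate in turn. For the first slot: if $\varphi_1\thicksim_{ET}\varphi_1'$ then, writing $n$ for the complexity of $\ml\varphi_1\varphi_2\varphi_3\mr$, we have $\varphi_1\in\mathfrak{M}_p(n-1)$, so the elementary transformation $\varphi_1\thicksim_{ET}\varphi_1'$ is already defined at the previous stage of the induction, and Type IV gives $\ml\varphi_1\varphi_2\varphi_3\mr\thicksim_{ET}\ml\varphi_1'\varphi_2\varphi_3\mr$. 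For the second and third slots I would conjugate by a Type II transposition, e.g.
\[
\ml\varphi_1\varphi_2\varphi_3\mr\thicksim_{ET}\ml\varphi_2\varphi_1\varphi_3\mr\thicksim_{ET}\ml\varphi_2'\varphi_1\varphi_3\mr\thicksim_{ET}\ml\varphi_1\varphi_2'\varphi_3\mr,
\]
using Type IV for the middle step. Chaining these moves along a $\thicksim$-chain in each coordinate and applying transitivity shows that $\varphi_i\thicksim\psi_i$ for $i=1,2,3$ implies $\ml\varphi_1\varphi_2\varphi_3\mr\thicksim\ml\psi_1\psi_2\psi_3\mr$, so $\m$ descends to a well-defined operation on $\M_p$.

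It then remains to verify the median axioms at the level of representatives. For (M1): $\m(\ql\varphi\qr,\ql\varphi\qr,\ql\psi\qr)=\ql\ml\varphi\varphi\psi\mr\qr$, and $\varphi\thicksim_{ET}\ml\varphi\varphi\psi\mr$ is a Type I transformation (the only move available when $\varphi$ has complexity $0$, and one of those listed under Type I otherwise), using symmetry of $\thicksim_{ET}$; hence this equals $\ql\varphi\qr$. For (M2): $\m(\ql\varphi_1\qr,\ql\varphi_2\qr,\ql\varphi_3\qr)=\ql\ml\varphi_1\varphi_2\varphi_3\mr\qr=\ql\ml\varphi_{\sigma(1)}\varphi_{\sigma(2)}\varphi_{\sigma(3)}\mr\qr$ by a Type II transformation. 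For (M3): evaluated on representatives the two sides are $\ql\ml\ml\varphi_1\varphi_2\varphi_3\mr\varphi_2\varphi_4\mr\qr$ and $\ql\ml\varphi_1\varphi_2\ml\varphi_3\varphi_2\varphi_4\mr\mr\qr$, which agree by a Type III transformation. Together with well-definedness this shows $(\M_p,\m)$ is a median algebra.

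The only genuinely delicate point will be the well-definedness argument, precisely because Type IV only allows rewriting inside the \emph{first} argument: one has to notice that the Type II transpositions make every argument accessible in that position, and that the inductive order of the definition guarantees the needed elementary transformation of the lower-complexity subexpression is already in hand when Type IV is invoked. Everything else amounts to reading the three median axioms off the transformations of Types I, II and III.
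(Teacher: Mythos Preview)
Your proof is correct and follows essentially the same approach as the paper: well-definedness via Type IV in the first slot plus Type II conjugation for the other slots, and the median axioms read off directly from the Type I, II, III transformations. The paper is terser (it dispatches the median axioms in a single sentence), but your more detailed verification is faithful to the same argument.
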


\begin{proof}
We need to check if $\varphi_1 \thicksim \varphi_1', \varphi_2 \thicksim \varphi_2'$ and $\varphi_3 \thicksim \varphi_3'$, then $\ml\varphi_1\varphi_2\varphi_3\mr \thicksim \ml\varphi_1'\varphi_2'\varphi_3'\mr$.

First suppose that $\varphi_1\thicksim_{ET}\varphi_1'$. Then $\ml\varphi_1\varphi_2\varphi_3\mr \mapsto \ml\varphi_1'\varphi_2\varphi_3\mr$ is a type IV elementary transformation. More generally  if $\varphi\thicksim \varphi_1'$ then a sequence of type IV transformations will ensure that $\ml\varphi_1\varphi_2\varphi_3\mr \thicksim \ml\varphi_1'\varphi_2\varphi_3\mr$. Now in the same way, in addition using type II elementary transformations, we can show that $\ml\varphi_1'\varphi_2\varphi_3\mr \thicksim \ml\varphi_1'\varphi_2'\varphi_3\mr\thicksim\ml\varphi_1'\varphi_2'\varphi_3'\mr
$. Hence $\m$ is a well defined ternary operator on $\M_p$. The fact that it is a median  follows immediately by applying the elementary transformations to its definition.
\end{proof}

\begin{prop}
The median algebra $(\M_p,\m)$ is the free median algebra generated by $\{[\alpha_1],\ldots,[\alpha_p]\}$.
\end{prop}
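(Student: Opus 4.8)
The plan is to prove the equality of relations ${\thicksim}={\thicksim_{med}}$ on $\mathfrak{M}_p$. Once this is established, $\M_p=\mathfrak{M}_p/{\thicksim}$ is literally the median algebra $\mathfrak{M}_p/{\thicksim_{med}}$, which was already identified above with the free median algebra on $p$ points, and the class $[\alpha_i]$ is exactly the image of the $i$-th generator; the proposition then follows immediately. (Equivalently, one could verify the universal property of the free median algebra for $(\M_p,\m)$ directly; the two routes use the same ingredients.)

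First I would prove ${\thicksim}\subseteq{\thicksim_{med}}$. Since $\thicksim_{med}$ is itself an equivalence relation and $\thicksim$ is the transitive closure of $\thicksim_{ET}$, it is enough to show that $\varphi\thicksim_{ET}\psi$ implies that $\varphi=\psi$ is a formal median identity. This I would do by induction on word length, following the inductive definition of elementary transformations: a Type I move $\varphi\mapsto\ml\varphi\varphi\psi\mr$ and its inverse are instances of the identity $\varphi=\ml\varphi\varphi\psi\mr$; a Type II move is an instance of the symmetry identity; a Type III move is an instance of the $(M3)$ identity $\ml\ml\varphi_1\varphi_2\varphi_3\mr\varphi_2\varphi_4\mr=\ml\varphi_1\varphi_2\ml\varphi_3\varphi_2\varphi_4\mr\mr$ or its inverse; and a Type IV move $\ml\varphi_1\varphi_2\varphi_3\mr\mapsto\ml\varphi_1'\varphi_2\varphi_3\mr$ with $\varphi_1\thicksim_{ET}\varphi_1'$ is covered by the inductive hypothesis, since if $\varphi_1=\varphi_1'$ holds in every median algebra then so does $\ml\varphi_1\varphi_2\varphi_3\mr=\ml\varphi_1'\varphi_2\varphi_3\mr$, evaluation being a homomorphism of ternary algebras by Lemma \ref{free}.

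Next I would prove the reverse inclusion ${\thicksim_{med}}\subseteq{\thicksim}$, which is where the recently established fact that $(\M_p,\m)$ is a genuine median algebra gets used. The quotient map $q\colon\mathfrak{M}_p\to\M_p$, $\varphi\mapsto[\varphi]$, is a homomorphism of ternary algebras carrying $\alpha_i$ to $[\alpha_i]$, so by the uniqueness clause of Lemma \ref{free} the realisation of any $\chi\in\mathfrak{M}_p$ satisfies $\chi_{\M_p}([\alpha_1],\ldots,[\alpha_p])=q(\chi)=[\chi]$. Hence if $\varphi=\psi$ is a formal median identity, then, since $\M_p$ is a median algebra, evaluating it at $([\alpha_1],\ldots,[\alpha_p])$ gives $[\varphi]=[\psi]$, i.e. $\varphi\thicksim\psi$. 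Combining the two inclusions yields ${\thicksim}={\thicksim_{med}}$ and the proposition.

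The argument is essentially formal once the earlier structural results (Lemmas \ref{basic fme}, \ref{free}, and the well-definedness of $\m$) are in place, so the expected main obstacle is purely bookkeeping: in the first inclusion one must keep track of how a single elementary transformation applied deep inside a formal expression is assembled from a stack of Type IV moves sitting over a Type I, II, or III move at the innermost position, and correspondingly that the needed median identity is obtained by repeated substitution. No delicate estimate is involved at this stage; it is the metric control of these elementary transformations, not their bare existence, that will be the real work later.
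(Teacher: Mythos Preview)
Your proof is correct. The paper takes the reverse organizational route: it verifies the universal property of $(\M_p,\m)$ directly---constructing $\bar f([\varphi]):=\tilde f(\varphi)$ for a given $f\colon\Omega_p\to Y$ and checking well-definedness by the same case analysis over elementary transformation types that you use for the inclusion ${\thicksim}\subseteq{\thicksim_{med}}$---and only afterwards deduces ${\thicksim}={\thicksim_{med}}$ as a corollary (Corollary~\ref{fmi-char}). You instead prove ${\thicksim}={\thicksim_{med}}$ first and then invoke the already-established freeness of $\mathfrak{M}_p/{\thicksim_{med}}$. The ingredients are identical: your first inclusion is exactly the paper's well-definedness check stated abstractly, and your second inclusion (evaluating a formal median identity in the median algebra $\M_p$) is the same observation the paper uses in the proof of Corollary~\ref{fmi-char}. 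Your ordering is marginally more economical in that it does not re-verify a universal property already checked for $\mathfrak{M}_p/{\thicksim_{med}}$, while the paper's ordering makes the corollary a one-line consequence of two universal objects agreeing; neither buys anything substantive over the other.
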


\begin{proof}
One only needs to check the \emph{universal property}: for any median algebra $(Y,\nu)$ and any map $f: \Omega_p \rightarrow Y$, there exists a unique median homomorphism  $\bar{f}: (\M_p,\m) \rightarrow (Y,\nu)$ extending $f$. Since $\mathfrak{M}_p$ is the free ternary algebra on $p$ points the map $f$ extends uniquely to a  map $\tilde{f}: \mathfrak{M}_p \rightarrow Y$. Explicitly $\tilde{f}(\varphi)=\varphi_Y(f(\alpha_1),\ldots, f(\alpha_p))$.

Now we define $\bar{f}(\ql\varphi\qr):=\tilde{f}(\varphi)$, for any $\ql\varphi\qr \in \M_p$. We need to check that this is a well-defined median homomorphism.

\emph{Well-definedness:} It suffices to show if $\varphi_1\thicksim_{ET}\varphi_2$ for two given formal median expressions, then $\tilde{f}(\varphi_1)=\tilde{f}(\varphi_2)$. We do it via  induction on $\xi(\varphi_1)$. If $\xi(\varphi_1)=0$, we know $\varphi_2=\ml\varphi_1\varphi_1\psi\mr$ for some $\psi \in \mathfrak{M}_p$, which implies
$$\tilde{f}(\varphi_1)=f(\varphi_1)=\nu\big(f(\varphi_1),f(\varphi_1),\tilde{f}(\psi)\big)=\nu\big(\tilde{f}(\varphi_1),\tilde{f}(\varphi_1),\tilde{f}(\psi)\big)=\tilde{f}\big(\ml\varphi_1\varphi_1\psi\mr\big)=\tilde{f}(\varphi_2).$$
If $\varphi_1=\ml\psi_1\psi_2\psi_3\mr$ with $\xi(\psi_i)<\xi(\varphi_1)$, by the definition of elementary transformations, the proof is divided into four cases:
\begin{itemize}
  \item \emph{Type I) to III).}  One just needs to observe that these elementary transformations do not change elements in a median algebra. For example in the case of a Type III) elementary transformation, $\psi_1$ will have the form $\psi_1=\ml\psi_4\psi_2\psi_5\mr$ for some $\psi_4,\psi_5$, so that $\varphi_1=\ml\ml\psi_4\psi_2\psi_5\mr\psi_2\psi_3\mr$, and $\varphi_2$ will have the form $\varphi_2=\ml\psi_4\psi_2\ml\psi_5\psi_2\psi_3\mr\mr$.  Then we have:
      \begin{eqnarray*}
        \tilde{f}(\varphi_1) & = & \nu\big(\tilde{f}(\ml\psi_4\psi_2\psi_5\mr),\tilde{f}(\psi_2),\tilde{f}(\psi_3)\big)\\
        & = & \nu\big(\nu(\tilde{f}(\psi_4),\tilde{f}(\psi_2),\tilde{f}(\psi_5)),\tilde{f}(\psi_2),\tilde{f}(\psi_3)\big)\\
        & = & \nu\big(\tilde{f}(\psi_4),\tilde{f}(\psi_2),\nu(\tilde{f}(\psi_5),\tilde{f}(\psi_2),\tilde{f}(\psi_3))\big)\\
        & = & \nu\big(\tilde{f}(\psi_4),\tilde{f}(\psi_2),\tilde{f}(\ml\psi_5\psi_2\psi_3\mr)\big)\\
        & = & \tilde{f}\Big( \ml\psi_4\psi_2\ml\psi_5\psi_2\psi_3\mr\mr\Big)\\
        & = & \tilde{f}(\varphi_2).
      \end{eqnarray*}
      Here the third equality follows axiom (M3) for a median algebra, while the other equalities all follow from the fact that $\tilde{f}$ is a map of ternary algebras. The  elementary transformations of Type I) and Type II) can be checked similarly.
  \item \emph{Type IV).} $\psi_1'$ is obtained from $\psi_1$ via an elementary transformation and $\varphi_2=\ml\psi_1'\psi_2\psi_3\mr$: by induction one has $\tilde{f}(\psi_1')=\tilde{f}(\psi_1)$, which implies
      $$\tilde{f}(\varphi_1)=\nu\big( \tilde{f}(\psi_1),\tilde{f}(\psi_2),\tilde{f}(\psi_3) \big) = \nu\big( \tilde{f}(\psi_1'),\tilde{f}(\psi_2),\tilde{f}(\psi_3) \big) = \tilde{f}(\varphi_2).$$
\end{itemize}

\emph{Median homomorphism:} By construction, we have:
\begin{eqnarray*}
  \bar{f}\big(\m(\ql\varphi_1\qr,\ql\varphi_2\qr,\ql\varphi_3\qr)\big)&=&\bar{f}\big( \ql\ml\varphi_1\varphi_2\varphi_3\mr\qr \big)=\tilde{f}\big(\ml\varphi_1\varphi_2\varphi_3\mr\big)\\
  &=&\nu\big( \tilde{f}(\varphi_1),\tilde{f}(\varphi_2),\tilde{f}(\varphi_3) \big)=\nu\big(\bar{f}(\ql\varphi_1\qr),\bar{f}(\ql\varphi_2\qr),\bar{f}(\ql\varphi_3\qr)\big).
\end{eqnarray*}

\emph{Uniqueness:} This follows from the definition of $\mathfrak{M}_p$ by an easy inductive argument.
\end{proof}

As a direct corollary of the above construction, we have the following description for formal median identities.
\begin{cor}\label{fmi-char}
Given $\varphi,\psi \in \mathfrak{M}_p$, then $\varphi=\psi$ is a formal median identity if and only if $\varphi \thicksim \psi$.
\end{cor}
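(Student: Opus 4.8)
The plan is to deduce both implications directly from the identification of $(\M_p,\m)$ as the free median algebra on $\{[\alpha_1],\ldots,[\alpha_p]\}$, using the realisation maps supplied by Lemma \ref{free}. No new verification of the median axioms will be needed: everything follows by feeding the appropriate median algebra into the universal constructions already in place.

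For the forward implication, suppose $\varphi \thicksim \psi$. I would observe that the well-definedness step in the proof that $(\M_p,\m)$ is free already establishes slightly more than was stated there: for \emph{any} median algebra $(Y,\nu)$ and \emph{any} map $f\colon\Omega_p\to Y$, if $\varphi_1\thicksim_{ET}\varphi_2$ then $\tilde f(\varphi_1)=\tilde f(\varphi_2)$, where $\tilde f(\chi)=\chi_Y(f(\alpha_1),\ldots,f(\alpha_p))$ is the realisation of $\chi$ from Lemma \ref{free}. Passing to the transitive closure, $\varphi\thicksim\psi$ forces $\varphi_Y(f(\alpha_1),\ldots,f(\alpha_p))=\psi_Y(f(\alpha_1),\ldots,f(\alpha_p))$ for every median algebra $(Y,\nu)$ and every choice of points $f(\alpha_1),\ldots,f(\alpha_p)\in Y$. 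By definition this says precisely that $\varphi=\psi$ is a formal median identity.

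For the reverse implication, I would evaluate the identity in the particular median algebra $(\M_p,\m)$ at the tuple $([\alpha_1],\ldots,[\alpha_p])$. A short induction on the complexity $\xi(\varphi)$ shows that $\varphi_{\M_p}([\alpha_1],\ldots,[\alpha_p])=[\varphi]$: the base case is the definition $(\alpha_i)_{\M_p}=[\alpha_i]$, and for $\varphi=\ml\varphi_1\varphi_2\varphi_3\mr$ one has $\varphi_{\M_p}(\cdots)=\m\big((\varphi_1)_{\M_p}(\cdots),(\varphi_2)_{\M_p}(\cdots),(\varphi_3)_{\M_p}(\cdots)\big)=\m([\varphi_1],[\varphi_2],[\varphi_3])=[\ml\varphi_1\varphi_2\varphi_3\mr]$ by the inductive hypothesis together with the very definition of $\m$. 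Consequently, if $\varphi=\psi$ is a formal median identity then applying it in $(\M_p,\m)$ yields $[\varphi]=[\psi]$, that is, $\varphi\thicksim\psi$.

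The whole argument is bookkeeping; the only point that takes a moment to recognise is that the well-definedness computation in the preceding proposition is exactly the content of the forward direction, so that the two implications together amount to nothing beyond the freeness of $(\M_p,\m)$ already proved. I do not expect a genuine obstacle here.
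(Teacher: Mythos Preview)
Your proof is correct. Both implications are established cleanly: the forward direction legitimately reuses the well-definedness computation from the preceding proposition (which was carried out for an arbitrary median algebra $Y$, not just for $\M_p$), and the reverse direction is exactly the observation that the quotient map $\mathfrak{M}_p\to\M_p$ is a ternary-algebra morphism, so that realisation at the generators returns the equivalence class.

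The paper's argument packages the same ingredients slightly differently. Rather than verifying the two containments $\thicksim\subseteq\thicksim_{med}$ and $\thicksim_{med}\subseteq\thicksim$ separately, it notes that the natural map $\mathfrak{M}_p/\thicksim_{med}\to\M_p$ sends the class of $\varphi$ to $[\varphi]$, and then invokes the uniqueness of the free median algebra: since both quotients satisfy the universal property on the same generating set, this map is an isomorphism, whence the two equivalence relations coincide. Your version is a touch more explicit and self-contained (it does not appeal to abstract uniqueness of universal objects), while the paper's is a line shorter; the underlying content is the same.
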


\begin{proof}
Since $\M_p$ is a median algebra the equivalence relation $\thicksim$ must contain $\thicksim_{med}$ and the universal map from $\mathfrak{M}_p/\thicksim_{med}$ to the quotient $\M_p$ maps the class of $\varphi$ to the class $[\varphi]$. Since $\M_p$ is also universal this map must be an isomorphism and the equivalence relations agree. Thus $\varphi=\psi$ is a formal median identity if and only if $\varphi \thicksim_{med} \psi$ if and only if $\varphi \thicksim \psi$.
\end{proof}

In \cite{zeidler2013coarse} Zeidler established that, while a given formal median identity does not have to hold for a  coarse median operator, it does hold up to bounded error: for $\mu$ a coarse median on a metric space $(X,d)$ with parameters $\rho,H$ and $\varphi=\psi$ a formal median identity, there exists a constant $R=R(\rho,H,\varphi,\psi)$ such that for all $x_1,\ldots,x_p\in X$,
$$\varphi_X(x_1,\ldots,x_p) \thicksim_R \psi_X(x_1,\ldots,x_p).$$

In order to prove Theorem \ref{simp.cma} we need to establish this under our, \emph{a priori} weaker, axioms $(C0)'\sim (C2)'$.

\begin{prop}\label{strong-Zeidler}
Let $(X,d)$ be a metric space with a ternary operator $\mu$ satisfying (C0)' $\sim$ (C2)' with parameters $\rho,\kappao$ and $\kappaiv$. Let $\varphi=\psi$ be a formal median identity with $p$ variables. Then there exists a constant $R=R(\rho,\kappao,\kappaiv,\varphi,\psi)$, such that for any $x_1,\ldots,x_p \in X$,
$$\varphi_X(x_1,\ldots,x_p) \thicksim_R \psi_X(x_1,\ldots,x_p).$$
\end{prop}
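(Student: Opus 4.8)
The plan is to use the explicit description of the equivalence relation $\thicksim$ from Corollary~\ref{fmi-char}: since $\varphi=\psi$ is a formal median identity, $\varphi\thicksim\psi$, so there is a finite chain $\varphi=\varphi^{(0)}\thicksim_{ET}\varphi^{(1)}\thicksim_{ET}\cdots\thicksim_{ET}\varphi^{(m)}=\psi$ of single elementary transformations. It suffices to show that a single elementary transformation introduces only a bounded error, uniformly controlled by $\rho,\kappao,\kappaiv$ and the complexity of the expressions involved; then $R$ is obtained by summing the $m$ error terms along the chain (and $m$, as well as all the intermediate complexities, depends only on $\varphi,\psi$). So the crux is the following claim: if $\varphi\thicksim_{ET}\varphi'$, then for all $x_1,\dots,x_p\in X$ we have $\varphi_X(x_1,\dots,x_p)\thicksim_{S}\varphi'_X(x_1,\dots,x_p)$ for a constant $S$ depending only on $\rho,\kappao,\kappaiv$ and $\xi(\varphi)$.

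I would prove this claim by induction on $\xi(\varphi)$, mirroring the structure of the definition of elementary transformations. The base case $\xi(\varphi)=0$ is a Type~I transformation $\alpha_i\mapsto\ml\alpha_i\alpha_i\psi\mr$, and the estimate $\varphi_X\thicksim_{\kappao}\mu(\varphi_X,\varphi_X,\psi_X)=\varphi'_X$ is exactly (C0)'. For the inductive step, with $\varphi=\ml\varphi_1\varphi_2\varphi_3\mr$ I would handle each type separately. Type~I and Type~II are immediate applications of (C0)' to the outermost median (localisation and symmetry respectively). Type~III is where the $4$-point condition enters: $\ml\ml\varphi_1\varphi_2\varphi_3\mr\varphi_2\varphi_4\mr$ realises to $\mu(\mu(a,b,c),b,d)$ and $\ml\varphi_1\varphi_2\ml\varphi_3\varphi_2\varphi_4\mr\mr$ realises to $\mu(a,b,\mu(c,b,d))$ with $a=(\varphi_1)_X$, $b=(\varphi_2)_X$, $c=(\varphi_3)_X$, $d=(\varphi_4)_X$, so (C2)' gives error $\kappaiv$ directly. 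Type~IV is the genuinely inductive case: if $\varphi_1\thicksim_{ET}\varphi_1'$ then by the inductive hypothesis $(\varphi_1)_X\thicksim_{S'}(\varphi_1')_X$ where $S'$ depends only on $\xi(\varphi_1)<\xi(\varphi)$, and then (C1)' (applied to the first argument of the outermost $\mu$) upgrades this to $\mu((\varphi_1)_X,(\varphi_2)_X,(\varphi_3)_X)\thicksim_{\rho(S')}\mu((\varphi_1')_X,(\varphi_2)_X,(\varphi_3)_X)$.

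The one point requiring care — and the main obstacle — is that (C1)' as stated only controls perturbation of the \emph{first} argument of $\mu$, not the second or third. In the Type~IV case the perturbed subexpression $\varphi_1$ sits in the first slot, so this is fine; but one must be sure the inductive framework never requires perturbing the second or third slot. This is exactly why the elementary transformations were defined asymmetrically, with Type~IV only acting on the first sub-expression while Type~II is available to permute the slots: to perturb $\varphi_2$ or $\varphi_3$ one first applies a Type~II transformation to bring it into the first slot (paying a $\kappao$ via (C0)'), then a Type~IV transformation, then a Type~II transformation back. So in the chain $\varphi=\varphi^{(0)}\thicksim_{ET}\cdots\thicksim_{ET}\varphi^{(m)}=\psi$ every step is already in the normalised form the induction can absorb, and no extra bookkeeping is needed beyond tracking that the constants depend only on $\rho,\kappao,\kappaiv$ and on the (bounded, $\varphi,\psi$-dependent) complexities appearing along the chain. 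Finally, setting $R:=\sum_{j=0}^{m-1} S_j$ where $S_j$ is the bound for the $j$-th step (which depends only on $\rho,\kappao,\kappaiv$ and $\max_j\xi(\varphi^{(j)})$, hence only on $\rho,\kappao,\kappaiv,\varphi,\psi$), the triangle inequality yields $\varphi_X(x_1,\dots,x_p)\thicksim_R\psi_X(x_1,\dots,x_p)$ as required.
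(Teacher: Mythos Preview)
Your proposal is correct and follows essentially the same route as the paper: invoke Corollary~\ref{fmi-char} to obtain a finite chain of elementary transformations, bound the error of a single step by induction on complexity (using (C0)' for Types~I/II, (C2)' for Type~III, and (C1)' plus the inductive hypothesis for Type~IV), then sum along the chain. Your explicit observation that the asymmetric formulation of Type~IV (acting only on the first slot) is precisely what makes (C1)' sufficient is a point the paper leaves implicit, but otherwise the arguments coincide.
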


We remark that if $\mu$ is a coarse median on a space $(X,d)$ with parameters $\rho, H$, then (C0)' $\sim$ (C2)' hold with parameters $\rho, \kappao,\kappaiv$ where $\kappao,\kappaiv$ depend only on $\rho$ and $H(4)$, thereby  recovering Zeidler's result, but with the constant depending only on the parameters $\rho,H(4)$. We also note that if we weaken the affine control in (C1)' to a uniform bornology then the result remains true.

\begin{proof}[Proof of \ref{strong-Zeidler}]
Since $\rho,\kappao,\kappaiv$ are fixed throughout the proof, we will omit these parameters from the notation and write $R(\varphi,\psi)$ in place of $R(\rho,\kappao,\kappaiv,\varphi,\psi)$.

By Corollary \ref{fmi-char}, we may choose a finite sequence of elementary transformations:
$$\varphi = \psi_0 \thicksim_{ET} \psi_1 \thicksim_{ET} \ldots \thicksim_{ET} \psi_n=\psi.$$
We will prove the result in the case that $\varphi \thicksim_{ET} \psi$, the general case following from this by adding the constants. By definition, there are four cases:
\begin{itemize}
  \item \emph{Type I) and Type II)}: By (C0)', we have $\varphi_X\thicksim_{\kappao} \psi_X$ so we choose $R(\varphi,\psi)=\kappao$;
  \item \emph{Type III)}. Assume $\varphi=\ml\ml\varphi_1\varphi_2\varphi_3\mr\varphi_2\varphi_4\mr$ and $\psi=\ml\varphi_1\varphi_2\ml\varphi_3\varphi_2\varphi_4\mr\mr$. By (C2)', we have:
      \begin{eqnarray*}
          \varphi_X &=& \mu\big(\mu((\varphi_1)_X,(\varphi_2)_X,(\varphi_3)_X),(\varphi_2)_X,(\varphi_4)_X\big) \\
                    & \thicksim_\kappaiv &\mu\big((\varphi_1)_X,(\varphi_2)_X,\mu((\varphi_3)_X,(\varphi_2)_X,(\varphi_4)_X)\big) \\
                    &=& \psi_X,
      \end{eqnarray*}
so we choose $R(\varphi,\psi)=\kappaiv$.
  \item \emph{Type IV)}. Let $\varphi,\psi\in \mathfrak{M}_p(n)$ and suppose that $\varphi$ is equivalent to $\psi$ by an elementary transformation of Type IV). Thus $\varphi=\ml\varphi_1\varphi_2\varphi_3\mr$, $\psi=\ml\varphi_1'\varphi_2\varphi_3\mr$ with $\varphi_1,\varphi_1'\in\mathfrak{M}_p(n-1)$ and  $\varphi_1 \thicksim_{ET} \varphi_1'$. If the latter is an elementary transformation of type I), II) or III) then we know that there is a constant $R(\varphi_1,\varphi_1')$ such that $(\varphi_1)_X\thicksim_{R(\varphi_1,\varphi_1')} (\varphi_1')_X$. Otherwise we have a further type IV) elementary transformation, and by induction on $n$ we may again assume that $(\varphi_1)_X\thicksim_{R(\varphi_1,\varphi_1')}  (\varphi_1')_X$, the base case $n=0$ being vacuous. By (C1)', we have:
      $$\varphi_X=\mu((\varphi_1)_X,(\varphi_2)_X,(\varphi_3)_X) \thicksim_{\rho(R(\varphi_1,\varphi_1'))} \mu((\varphi_1')_X,(\varphi_2)_X,(\varphi_3)_X) = \psi_X$$
      and we set $R(\varphi,\psi)=\rho(R(\varphi_1,\varphi_1'))$.
\end{itemize}
\end{proof}

\subsection{Proof of Theorem \ref{simp.cma}}

We know that (C0)' $\sim$ (C2)' hold in a metric space $(X,d)$ with a coarse median $\mu$. Conversely we must prove that if (C0)' $\sim$ (C2)' hold then (C2) also holds, since (C1) follows easily from (C0)' and (C1)'.

For each $p$, we choose a splitting $\M_p\to \mathfrak{M}_p$ of the quotient map $\mathfrak{M}_p\to \M_p$, such that $[\alpha_i]$ is lifted to $\alpha_i$. In other words, for each $p$ and each element of $\M_p$ we choose a representative in $\mathfrak{M}_p$. In a median algebra, the median stabilisation of finitely many points is itself finite, \cite[Lemma 6.20]{van1993theory}, so $\M_p$ is finite and thus the splittings provide finite subsets $\Theta_p$ of $\mathfrak{M}_p$.

Now let $A$ be a finite subset of $X$, with cardinality $p$, and enumerate the set $A$ as $A=\{x_1,\ldots,x_p\} $. Define $\pi: A \rightarrow \M_p$ by $x_i \mapsto [\alpha_i]$.
We consider the set of formal median identities of the form $\ml\varphi_1\varphi_2\varphi_3\mr=\psi$ where $\varphi_1,\varphi_2,\varphi_3,\psi\in \Theta_p$. Since $\Theta_p$ is finite there are only finitely many of these. By Proposition \ref{strong-Zeidler} for each of these formal median identities there is a constant $R$, which does not depend on $A$, such that $\mu((\varphi_1)_X,(\varphi_2)_X,(\varphi_3)_X) \thicksim_{R} \psi_X$. Let $H_p$ be an upper bound for the (finitely many) constants $R$ arising from these identities. (Note moreover that the constant $H_p$ does depend on the parameters $\rho, \kappao, \kappaiv$ but not on the space $X$ itself.)

Now we define a map $\lambda: \M_p \rightarrow X$ by $\ql\varphi\qr \mapsto (\varphi)_X$ for $\varphi\in\Theta_p$. By the above estimate for $\varphi_1,\varphi_2,\varphi_3\in\Theta_p$, we have:
\begin{eqnarray*}
  \lambda\big( \m(\ql\varphi_{1}\qr,\ql\varphi_{2}\qr,\ql\varphi_{3}\qr) \big) &=& \lambda \big( \ql\ml\varphi_{1}\varphi_{2}\varphi_{3}\mr\qr \big) \thicksim_{H_p} \mu\big((\varphi_{1})_X,(\varphi_{2})_X,(\varphi_{3})_X\big)\\
  &=&\mu\big( \lambda(\ql\varphi_{1}\qr),\lambda(\ql\varphi_{2}\qr),\lambda(\ql\varphi_{3}\qr) \big).
\end{eqnarray*}
Clearly, $\lambda\pi$ is the inclusion $A \hookrightarrow X$, so we are done.
\qed

\section{Characterising rank in a coarse median space}\label{rank}
For a coarse median space the rank is determined by the dimensions of the approximating median algebras for its finite subsets. However, replacing Bowditch's approximation axiom (C2) by our axiom (C2)' controlling four point approximations, it is no longer sufficient to consider the dimensions of these: the free median algebra on four points has dimension 3 (see Figure \ref{fig:freemedian}) which clearly does not provide a universal bound on the rank.

We begin by analysing the case of rank 1 coarse median spaces where we reprove Bowditch's result that for geodesic spaces this implies hyperbolicity. We will do this directly, without recourse to the asymptotic cones that are an essential ingredient in Bowditch's proof. We will then consider the general case, showing how to define ``coarse cubes'' in terms of the coarse median, showing that a coarse median space has rank at most $n$ if and only if it cannot contain arbitrarily large $(n+1)-$ coarse cubes.

\subsection{Rank 1 coarse median spaces}
According to Bowditch \cite{bowditch2013coarse}, the class of geodesic coarse median spaces with rank 1 coincides with the class of geodesic $\delta$-hyperbolic spaces. Given a geodesic hyperbolic space it is possible to construct a coarse median using the thinness of geodesic triangles; for the opposite direction Bowditch used the method of asymptotic cones. Here we give another more direct way to prove this.

We will use the following characterisation of Gromov's hyperbolicity for geodesic spaces, developed by Papasoglu and Pomroy, which asserts that for a geodesic metric space stability of quasi-geodesics implies hyperbolicity.
\begin{thm}[\cite{papasoglu1995strongly,pomroy,chatterji2006characterization}]\label{char for hyp}
Let $(X,d)$ be a geodesic metric space. Suppose there exists $\varepsilon>0, R>0$, such that for any $a,b\in X$, all $(1, \varepsilon)$-quasi-geodesics from $a$ to $b$ remain in an $R$-neighbourhood of any geodesic connecting $a,b$. Then $X$ is hyperbolic.
\end{thm}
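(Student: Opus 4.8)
\medskip

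\noindent\textbf{Proof proposal (for Theorem \ref{char for hyp}).}
The plan is to extract from the stability hypothesis a uniform bound on the slimness of geodesic triangles in $X$; since for a geodesic space uniform triangle slimness is one of the standard equivalent formulations of Gromov hyperbolicity, this suffices. Two preliminary, unconditional observations set the stage. First, every geodesic is a $(1,\varepsilon)$-quasi-geodesic, so the hypothesis immediately makes geodesic bigons uniformly $R$-thin; this is the base level of control. Second, an elementary coarse-geometry fact: a continuous path $P$ that lies in the $R$-neighbourhood of a geodesic joining its two endpoints also has that geodesic in its $O(R)$-neighbourhood, so a one-sided statement ``$P\subseteq N_R([u,v])$'' upgrades to a two-sided comparison of $P$ and $[u,v]$.

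The core is a surgery argument, run by contradiction. Suppose geodesic triangles are not uniformly slim. Then, for any threshold we like, there is a geodesic triangle $[x,y]\cup[y,z]\cup[z,x]$ and a point $w$ on one side, say $w\in[z,x]$, with $\ell:=d\big(w,[x,y]\cup[y,z]\big)$ exceeding that threshold. Excise from $[z,x]$ a sub-geodesic $\sigma$ whose length is comparable to $\ell$ and which has $w$ well inside it, and reroute: replace $\sigma$ by a path $P$, between the endpoints of $\sigma$, assembled from a bounded number of geodesic pieces running out to and along $[x,y]\cup[y,z]$ (through suitable nearest-point projections of the cut points). The crucial estimate is that, once $\ell$ is large relative to $\varepsilon$ and the bigon constant, $P$ is a genuine $(1,\varepsilon)$-quasi-geodesic: the distance $\ell$ keeps the rerouted portion from shortcutting back towards $\sigma$, which bounds both the length of $P$ and the Gromov products at the junctions of its geodesic pieces. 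Applying the hypothesis to $P$ places it $R$-close to $\sigma$, and then by the preliminary coarse fact $w\in\sigma$ is $O(R)$-close to $P$, hence $O(R)$-close to $[x,y]\cup[y,z]$ --- contradicting $\ell>$ (threshold) once the threshold is chosen above this $O(R)$-bound. This proves that triangles are $\delta$-slim for some $\delta=\delta(\varepsilon,R)$, and hence $X$ is hyperbolic.

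To make the crucial estimate work, some organisational bookkeeping is needed: the cut points of $\sigma$ and the projections used to build $P$ must be chosen carefully (one typically runs the argument inductively on the ``size'' of the triangle, so that the only inputs are the inductive hypothesis at strictly smaller size together with the unconditional bigon-thinness), and the degenerate cases --- where $w$ lies close to a vertex, or the triangle is small --- must be handled separately (they are easier). The dependence of the eventual hyperbolicity constant on $\varepsilon$ and $R$ should be tracked throughout, since the whole point is that it is \emph{uniform}.

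The main obstacle is the circularity inside the crucial estimate: to certify that the surgered path $P$ is a $(1,\varepsilon)$-quasi-geodesic with \emph{exactly} the constant $\varepsilon$ supplied by the theorem --- a coarser constant would carry no information --- one seems to need precisely the ``local thinness'' one is trying to prove. The resolution is quantitative rather than conceptual: one chooses the length of the excised segment proportional to the far-distance $\ell$ and takes $\ell$ large enough that the unconditional control (thin bigons, together with the smaller-size cases already handled) is by itself sufficient to pin down the quality of $P$. Getting these proportions right, so that ``large $\ell$'' both produces the $(1,\varepsilon)$-quasi-geodesic and yields a contradiction, is the delicate heart of the proof.
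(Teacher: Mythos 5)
First, a point of comparison: the paper does not prove Theorem \ref{char for hyp} at all --- it is imported verbatim from Papasoglu, Pomroy and Chatterji--Niblo and used as a black box. So your proposal has to be judged on its own merits, and unfortunately it has a genuine gap at exactly the point you flag as ``the delicate heart''.

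The gap is that the surgered path $P$ is not a $(1,\varepsilon)$-quasi-geodesic for the \emph{fixed} $\varepsilon$ of the hypothesis, and your proposed fix --- take $\ell$ large --- makes matters worse rather than better. A $(1,\varepsilon)$-quasi-geodesic from $u$ to $v$ can waste at most an additive $\varepsilon$ of length over the straight-line distance $d(u,v)$, uniformly over all scales. Your path $P$ replaces a sub-geodesic $\sigma$ of length comparable to $\ell$ by a detour that travels out to $[x,y]\cup[y,z]$ and back; even in the most favourable case (both projections land on the same side, and the excised endpoints are themselves within $\ell$ of that side) the length of $P$ is of order $3\,d(u,v)$, and if the projections land on different sides the middle portion must pass through the far vertex and can be arbitrarily long. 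So $P$ is at best a $(\lambda,0)$-quasi-geodesic for some $\lambda>1$, or a $(1,C\ell)$-quasi-geodesic with additive defect growing linearly in $\ell$ --- and the hypothesis is silent about both classes. Enlarging $\ell$ scales the detour up proportionally, so no threshold on $\ell$ rescues the estimate; this is not a bookkeeping issue but the reason the theorem is hard. The actual proofs circumvent it by only ever applying the hypothesis to paths whose defect is bounded independently of scale (e.g.\ a geodesic followed by a jump of length at most $\varepsilon$ is an honest $(1,2\varepsilon)$-quasi-geodesic), and then deriving hyperbolicity not from a one-shot triangle surgery but from a divergence or subdivision argument run across all scales (Papasoglu's bigon argument, or the subquadratic-isoperimetric route). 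If you want to pursue a proof, that is the mechanism to reconstruct; the triangle-surgery-by-contradiction as written cannot be made to close.
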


We will prove that in a rank 1 geodesic coarse median space quasi-geodesics are stable in the above sense. The key idea is to use coarse intervals in place of the geodesics connecting $a$ and $b$. We will prove the following:

\begin{thm}\label{rank1}
Let $(X,d,\mu)$ be a geodesic coarse median space with rank $1$. For any $\zeta, \varepsilon >0$, there exists a constant $\widetilde{D}=\widetilde{D}(\zeta,\varepsilon)$, such that for any $a,b \in X$ and any $(\zeta,\varepsilon)$-quasi-geodesic $\gamma \colon [s,t] \rightarrow X$ connecting $a$ and $b$, the Hausdorff distance between $[a,b]$ and $\mathrm{Im}(\gamma)$ is less than $\widetilde{D}$. \end{thm}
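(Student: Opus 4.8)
The plan is to show that the coarse interval $[a,b]$ is, with constants depending only on the coarse median parameters, a \emph{coarse geodesic} from $a$ to $b$, and then to deduce the Hausdorff bound by a fellow-travelling argument -- first for geodesics and $(1,\varepsilon)$-quasi-geodesics, and then, via hyperbolicity, in general. I would establish two properties of $[a,b]$. (i) \emph{Coarse connectedness:} if $\sigma\colon[0,d(a,b)]\to X$ is a geodesic from $a$ to $b$, the points $c_i=\mu(a,\sigma(i),b)\in[a,b]$ satisfy $c_0\thicksim_{\kappao}a$, $c_N\thicksim_{\kappao}b$ and $d(c_i,c_{i+1})\leqslant\rho(1)$ by (C1), so $[a,b]$ carries a $\rho(1)$-chain from $a$ to $b$ and the function $c\mapsto d(a,c)$ is coarsely onto $[0,d(a,b)]$. (ii) \emph{Coarse thinness:} there is a uniform $\Lambda$ with $d(a,c)+d(c,b)\leqslant d(a,b)+\Lambda$ for every $c\in[a,b]$; equivalently $[a,b]\subseteq N_{\Lambda/2}(\mathrm{Im}\,\sigma)$ for any geodesic $\sigma$ from $a$ to $b$, and with (i) this says $[a,b]$ is a uniform quasi-line from $a$ to $b$. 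Property (ii) is where rank $1$ enters: one approximates four-point subsets by median algebras, which under the rank $1$ hypothesis are finite trees, uses that the corresponding estimate is exact there, and transfers back using (C1) and the geodesic structure of $X$.

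Granting (i) and (ii), I would obtain the Hausdorff bound for a $(1,\varepsilon)$-quasi-geodesic $\gamma$ from $a$ to $b$ as follows. Directly from the definition of a $(1,\varepsilon)$-quasi-geodesic, a point $p=\gamma(r)$ has bounded excess: $d(a,p)+d(p,b)-d(a,b)\leqslant 3\varepsilon$. Put $c=\mu(a,p,b)\in[a,b]$; by Lemma~\ref{kappav}, $c\in[a,p]_{\kappav}\cap[p,b]_{\kappav}\cap[a,b]_{\kappav}$. Applying the thinness estimate (ii) to the three intervals $[a,p]$, $[p,b]$ and $[a,b]$ (with $c$ lying $\kappav$-coarsely in each) and adding yields $2\,d(p,c)\leqslant\bigl(d(a,p)+d(p,b)-d(a,b)\bigr)+O(1)\leqslant 3\varepsilon+O(1)$, so $\mathrm{Im}(\gamma)\subseteq N_{\widetilde D_1}([a,b])$. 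For the reverse inclusion I would interpolate $\gamma$ by geodesic segments so that it becomes continuous and fix $c\in[a,b]$. By (i) and (ii) the interval $[a,b]$ is a uniform quasi-line, so $c$ splits it into an ``$a$-part'' $\{c'\in[a,b]:d(a,c')\leqslant d(a,c)\}$ and a ``$b$-part'' which overlap only in a uniformly bounded neighbourhood of $c$. Since $\gamma$ runs from $a$ (near the $a$-part) to $b$ (near the $b$-part) and $\mathrm{Im}(\gamma)\subseteq N_{\widetilde D_1}([a,b])$, continuity produces a point of $\gamma$ close to both parts, hence close to $c$; thus $[a,b]\subseteq N_{\widetilde D_2}(\mathrm{Im}(\gamma))$.

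To finish, geodesics are $(1,0)$-quasi-geodesics, so the previous paragraph shows that any $(1,\varepsilon)$-quasi-geodesic and any geodesic with the same endpoints are at bounded Hausdorff distance (both being close to $[a,b]$), with bound depending only on $\varepsilon$ and the coarse median parameters. Theorem~\ref{char for hyp} then shows $X$ is $\delta$-hyperbolic with $\delta$ depending only on the coarse median parameters. For a general $(\zeta,\varepsilon)$-quasi-geodesic $\gamma$ from $a$ to $b$, the stability of quasi-geodesics in the $\delta$-hyperbolic space $X$ puts $\mathrm{Im}(\gamma)$ within some $M(\delta,\zeta,\varepsilon)$ of a geodesic from $a$ to $b$, which by (ii) lies within a uniform distance of $[a,b]$; running (i)+(ii) in the other direction (the geodesic, being a $(1,0)$-quasi-geodesic, has $[a,b]$ in a uniform neighbourhood of its image, hence of $\mathrm{Im}(\gamma)$ after enlarging constants) gives the Hausdorff bound $\widetilde D=\widetilde D(\zeta,\varepsilon)$.

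The step I expect to be the main obstacle is (ii), the uniform coarse thinness of $[a,b]$. The four-point approximations of axiom (C2)$'$ control only the coarse median operator, not the metric, so one cannot simply transport the exact tree estimate through the approximation; the argument must be coupled with (C1) and the hypothesis that $X$ is geodesic to bound the excess $d(a,c)+d(c,b)-d(a,b)$ independently of $a$, $b$ and, crucially, of $d(a,b)$ -- a naive bisection of the interval only gives a bound growing like $\log d(a,b)$, so obtaining uniformity takes more care. Everything else -- coarse connectedness of $[a,b]$, the fellow-travelling estimates, the appeal to Papasoglu--Pomroy, and the final bootstrap via the Morse lemma -- is comparatively routine.
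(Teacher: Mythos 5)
Your architecture is reasonable and parts of it (the bounded-excess computation showing $\mathrm{Im}(\gamma)\subseteq N_{\widetilde D_1}([a,b])$, the final bootstrap through Theorem~\ref{char for hyp} and the Morse lemma) would work, but the proposal has a genuine gap at exactly the point you flag as ``the main obstacle'': step (ii) is never proved, and it is the entire content of the hard direction. The sketch you offer --- approximate four-point subsets by trees, use that the estimate is exact there, and ``transfer back'' --- cannot be completed as stated, for the reason you yourself give: the approximating median algebra $\Pi$ in (C2) carries no metric compatible with $d$, so exactness of the tree identity says nothing about the excess $d(a,c)+d(c,b)-d(a,b)$. The paper closes this gap with two ingredients you do not supply: Lemma~\ref{hyp trg for intvl} (in rank $1$, $\mathcal N_\zeta([a,b])\subseteq\mathcal N_{\zeta'}([a,c])\cup\mathcal N_{\zeta'}([c,b])$), which by bisection along a rectifiable path yields the logarithmic estimate of Proposition~\ref{control}, and then the exponential-divergence detour argument: take $x_0\in[a,b]$ nearly realising $D=\sup_{x\in[a,b]}d(x,\mathrm{Im}\gamma)$, cut off points $y\in[x_0,a]$, $z\in[x_0,b]$ at distance about $2D$ from $x_0$, and build a path $\vartheta$ from $y'$ to $z'$ through $\gamma$ of length linear in $D$ that avoids the $(D-1)$-ball around $x_0$; Proposition~\ref{control} then forces $D-1\lesssim\log D$, hence $D$ is uniformly bounded. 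You correctly observe that naive bisection only gives $\log d(a,b)$, but the idea that converts this into a uniform bound is missing from your write-up, so the proof is not complete.

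A secondary problem: your ``equivalently'' in (ii) conflates two inequivalent statements. Bounded excess $d(a,c)+d(c,b)\leqslant d(a,b)+\Lambda$ does \emph{not} imply $c\in N_{\Lambda/2}(\mathrm{Im}\,\sigma)$ in a general geodesic space (in $\mathbb R^2$ the set of points of excess $\leqslant\Lambda$ relative to $a,b$ has width of order $\sqrt{\Lambda\, d(a,b)}$); that implication is itself a hyperbolicity phenomenon, so invoking it before Papasoglu--Pomroy is circular. Your forward inclusion $\mathrm{Im}(\gamma)\subseteq N_{\widetilde D_1}([a,b])$ only uses the bounded-excess form and is fine, but the reverse inclusion, where you split $[a,b]$ into an $a$-part and a $b$-part overlapping only near $c$, needs $[a,b]$ to be a genuine quasi-line, i.e.\ the strong neighbourhood form of (ii). So even modulo the main gap you would need to prove (ii) in the strong form directly --- which is again precisely the first half of the paper's argument.
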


Applying Theorem \ref{char for hyp} we obtain the following corollary.

\begin{cor}
Let $(X,d,\mu)$ be a geodesic coarse median space with rank $1$.  Then $(X,d)$ is hyperbolic.
\end{cor}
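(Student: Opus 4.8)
The plan is to deduce hyperbolicity directly from Theorem~\ref{rank1} together with the stability criterion of Papasoglu and Pomroy recorded as Theorem~\ref{char for hyp}. No further structural analysis of $(X,d,\mu)$ is required: the argument is a short bookkeeping deduction that feeds the conclusion of Theorem~\ref{rank1} into Theorem~\ref{char for hyp}.

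First I would fix $\zeta=1$ and choose some $\varepsilon>0$ (say $\varepsilon=1$), and set $\widetilde D:=\widetilde D(1,\varepsilon)$, the constant supplied by Theorem~\ref{rank1}. Let $a,b\in X$ be arbitrary. Since $(X,d)$ is geodesic, choose a geodesic $\sigma\colon[0,d(a,b)]\to X$ from $a$ to $b$; parametrised by arclength it satisfies the defining inequalities with constants $(1,0)$, hence it is in particular a $(1,\varepsilon)$-quasi-geodesic connecting $a$ and $b$. Applying Theorem~\ref{rank1} to $\sigma$ shows that the Hausdorff distance between the coarse interval $[a,b]$ and $\mathrm{Im}(\sigma)$ is less than $\widetilde D$; in particular $[a,b]\subseteq N_{\widetilde D}(\mathrm{Im}(\sigma))$.

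Next, let $\gamma\colon[s,t]\to X$ be an arbitrary $(1,\varepsilon)$-quasi-geodesic connecting $a$ and $b$. Applying Theorem~\ref{rank1} once more, this time to $\gamma$, gives $\mathrm{Im}(\gamma)\subseteq N_{\widetilde D}([a,b])$. Combining the two inclusions through the triangle inequality yields $\mathrm{Im}(\gamma)\subseteq N_{2\widetilde D}(\mathrm{Im}(\sigma))$. Crucially, $\widetilde D$ depends only on $\zeta$ and $\varepsilon$, not on $a$ and $b$, so setting $R:=2\widetilde D$ we have shown that for every pair of points $a,b\in X$, every $(1,\varepsilon)$-quasi-geodesic from $a$ to $b$ lies in the $R$-neighbourhood of any geodesic connecting $a$ and $b$. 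This is exactly the hypothesis of Theorem~\ref{char for hyp}, whose conclusion is that $(X,d)$ is hyperbolic.

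The only care needed at this level is bookkeeping: verifying that a geodesic genuinely qualifies as a $(1,\varepsilon)$-quasi-geodesic in the sense used by Theorem~\ref{rank1}, and that the fellow-travelling constant $\widetilde D$ is uniform over all basepoints $a,b$. Consequently there is no real obstacle in this step. The substantive content — that quasi-geodesics and coarse intervals mutually fellow-travel in the rank~$1$ case, which is what makes the Papasoglu--Pomroy criterion applicable — is precisely Theorem~\ref{rank1}; the main difficulty lies there (in exploiting the tree-like structure of the rank~$1$ approximating median algebras and the coarse retraction $x\mapsto\mu(a,x,b)$ onto $[a,b]$), and in the present deduction we merely invoke it.
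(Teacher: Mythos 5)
Your proposal is correct and coincides with the paper's own argument: both apply Theorem \ref{rank1} once to an arbitrary $(1,\varepsilon)$-quasi-geodesic and once to a geodesic, so that each lies within Hausdorff distance $\widetilde D$ of the interval $[a,b]$, and then invoke Theorem \ref{char for hyp} with $R=2\widetilde D$. The only difference is that you spell out the bookkeeping (uniformity of $\widetilde D$, the triangle inequality step) which the paper leaves implicit.
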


\begin{proof}
According to Theorem \ref{rank1}, any $(1,\varepsilon)$ quasi-geodesic is within Hausdorff distance $\widetilde D$ of the interval $[a,b]$. In particular any geodesic from $a$ to $b$ is within Hausdorff distance $\widetilde D$ of the interval. We now apply Theorem \ref{char for hyp} with $R=2\widetilde D$.
\end{proof}

We will also give a characterisation of rank $1$ coarse median spaces in terms of the geometry of intervals. Note that here we do not require the space to be (quasi)-geodesic.

\begin{thm}\label{char for rank1}
Let $(X,d,\mu)$ be a coarse median space. Then $X$ has rank at most $1$ if and only if there exists a constant $\lambda$ such that for any $a,b,x\in X$ with $x\in [a,b]$, we have:
\begin{equation}\label{thintervaltriangles}
[a,b] \subseteq \mathcal{N}_\lambda([a,x]) \cup \mathcal{N}_\lambda([x,b]).
\end{equation}
\end{thm}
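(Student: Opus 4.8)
The plan is to prove both implications by relating the "thin interval triangle" condition to the non-existence of large coarse squares, which is the rank~$\leqslant 1$ criterion that will be developed for general $n$ later in the section. For the forward direction, suppose $X$ has rank at most $1$, achieved under some parameters. Given $a,b\in X$ and $x\in[a,b]$, take a finite subset $A$ containing $a,b,x$ together with an arbitrary point $y\in[a,b]$, so that $|A|\leqslant 4$, and apply axiom (C2): there is a median algebra $\Pi$ of rank at most $1$ with maps $\pi,\lambda$ and $\lambda\pi=\iota_A$. A median algebra of rank~$1$ is (isomorphic to a subalgebra of) a tree-like median algebra, so in $\Pi$ the element $\pi x$ lies on the "segment" from $\pi a$ to $\pi b$ and hence $\pi y$ projects onto one of $[\pi a,\pi x]$ or $[\pi x,\pi b]$; concretely $\mu_\Pi(\pi a,\pi y,\pi x)=\pi y$ or $\mu_\Pi(\pi x,\pi y,\pi b)=\pi y$. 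Pushing this through $\lambda$ and using Lemma~\ref{kappav} together with the quasi-morphism estimate from (C2) shows that $y$ is within a uniformly bounded distance (depending only on $H(4)$ and $\rho$) of $[a,x]$ or of $[x,b]$. Since $x\in[a,b]$ we must first verify $\pi x\in[\pi a,\pi b]$; this follows because $x=\mu(a,z,b)$ for some $z$, so $\pi x$ is close to $\mu_\Pi(\pi a,\pi z,\pi b)\in[\pi a,\pi b]$ after adjusting, and in a rank~$1$ median algebra being close to the interval, with $\pi x$ actually equal to something we can project, lands it on the interval — here one may instead replace $x$ by its projection $\mu_\Pi$ inside $\Pi$ at the cost of a bounded error. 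Setting $\lambda$ equal to the resulting uniform bound gives \eqref{thintervaltriangles}.

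For the converse, assume the thin interval triangle condition holds with constant $\lambda$, and suppose for contradiction that $X$ does not have rank at most $1$. By the (C2)-approximation one can then find, for arbitrarily large $N$, four points $a,b,c,d$ that $\mu$-generate (up to bounded error) a coarse square of side length at least $N$ — that is, configured so that the four iterated coarse medians sit at the corners of an $I^2$ that is metrically large. The strategy is to contradict thinness using this square. Put $p=\mu(a,c,b)$-type corner points; more precisely let $u=\mu(a,b,c)$, $v=\mu(a,b,d)$, $w=\mu(c,d,a)$, $z=\mu(c,d,b)$, and observe that $u,v\in[a,b]$ while the midpoint-type element $\mu(u,c,v)$ (or $\mu(u,v;c)$) is a point of $[a,b]$ that lies at distance comparable to $N/2$ from both $[a,u]$ and $[u,v]$-style subintervals — the flat geometry of $I^2$ forces a point of one side of the square to be far from both halves of the opposite side once we fold $[a,b]$ through its midpoint. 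Chasing the coarse median identities (Lemmas~\ref{itrt eqn coarse} and~\ref{itrt eqn coarse2}) to control how $[a,x]$ and $[x,b]$ sit inside the approximating $\Pi$, we get that the relevant point cannot be $\lambda$-close to either, provided $N>10\lambda$ say, contradicting \eqref{thintervaltriangles}.

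The main obstacle I expect is the converse direction: extracting an honest large coarse square from the failure of rank~$\leqslant 1$ and then pinning down \emph{which} point of $[a,b]$ witnesses the failure of thinness. The subtlety is that $[a,b]$ as we have defined it (the image of $\mu(a,\cdot,b)$) can be strictly larger than Bowditch's coarse interval, so one has to argue inside a single finite approximation $\Pi$ where intervals behave like honest median intervals, choose $x\in[a,b]$ to be the image of the "centre" coordinate of the square, and then transport the estimate back to $X$ controlling all errors by $\rho$ applied to $H(N')$ for the bounded cardinality $N'$ of the witnessing set. Getting the quantifiers right — the bad configuration has bounded cardinality even though the square is metrically large — is where care is needed, but it is exactly parallel to the general rank-$n$ argument via coarse cubes that the paper sets up next, so I would factor out a lemma identifying "rank $>1$" with "arbitrarily large coarse squares" and cite it for both directions.
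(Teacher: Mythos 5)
Your forward direction is essentially the paper's: approximate a four-point set by a tree, use the fact that in a tree any point of $[\bar a,\bar b]$ lies in $[\bar a,\bar x]$ or $[\bar x,\bar b]$, and push back through $\lambda$ with $H(4)$-control. In fact the paper's Lemma~\ref{hyp trg for intvl} proves this for \emph{arbitrary} $c$, not just $c\in[a,b]$, so your worry about whether $\pi x$ lands in $[\pi a,\pi b]$ is unnecessary; the argument works without that hypothesis.

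The converse is where the gap is. You propose the contrapositive: $\neg(\rank\leqslant 1)$ should produce arbitrarily large coarse squares, which then contradict thinness. The first step is the problem, and you half-acknowledge it. Negating $\rank\leqslant 1$ only says that for every choice of parameters there exist finite sets whose $\rho,H$-approximations cannot be taken tree-like; it does not hand you a metrically large $L$-quasi-morphic image of $I^2$ with uniform $L$. Extracting that is exactly the implication $(3)\Rightarrow(1)$ of Theorem~\ref{char for high rank-final} (at $n=1$), and the paper's proof of that implication \emph{is} the square-collapse argument you would be trying to avoid. So factoring out a ``rank $>1$ iff large coarse squares'' lemma and citing it for both directions does not save work: proving that lemma requires the same engine. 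Worse, a naive ``failure gives a bad configuration'' argument doesn't control the quantifiers correctly, because the rank definition demands a single parameter function $H'$ working for all cardinalities $p$ simultaneously. The paper handles this by collapsing hyperplanes of the approximating complex one at a time, bounding the new parameter at each step ($H'(p)=\beta^{2^p}(H(p))$ with $\beta$ depending only on $\rho,\kappavv,\lambda$), which your sketch does not touch.

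The paper's sufficiency proof is direct, not by contradiction: take any finite $A$ and its $\rho,H$-approximation $\Pi$; if $\dim\Pi\geqslant 2$, pick a $2$-square $\bar a,\bar b,\bar c,\bar d$, set $b'=\mu(a,b,d)$, $c'=\mu(a,c,d)\in[a,d]$, apply the thin interval triangles hypothesis to $[a,d]$ with splitting point $b'$, and a short chase using Lemma~\ref{estimate}, the coarse five-point estimate, and the (C2) identity $\mu(a,b,c)\thicksim_{H(p)} a$ forces one edge of the square to have uniformly short image in $X$. Zeidler's parallel edge lemma (Lemma~\ref{zeidler}) then degenerates the corresponding hyperplane, which is quotiented out; iterating over at most $2^p$ hyperplanes yields a tree, proving $\rank\leqslant 1$ with the modified parameter. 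Your second step (``large square violates thinness'') is intuitively the reverse of this collapse, but as written it is too vague to carry the load, and in any case the hard missing piece is the first step, not the second.
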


We regard the inclusion (\ref{thintervaltriangles}) as an analog of Gromov's thin triangles condition for coarse intervals, and begin by proving that, moreover, in a rank $1$ space an analogous condition holds for neighbourhoods of intervals. 

\begin{lem}\label{hyp trg for intvl}
Let $(X,d,\mu)$ be a coarse median space with rank 1 achieved under parameters $\rho,H$. Then for any $\zeta\geqslant 0$, there exists a constant $\zeta'=2H(4)+\zeta$ such that for any $a,b,c\in X$, we have:
$$\mathcal{N}_{\zeta}([a,b]) \subseteq \mathcal{N}_{\zeta'}([a,c]) \cup \mathcal{N}_{\zeta'}([c,b]).$$
\end{lem}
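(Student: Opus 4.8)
The plan is to reduce the statement to the corresponding statement for the intervals $[\cdot,\cdot]_{\kappavv}$, apply rank $1$ there, and then translate back. First I would use Lemma \ref{kappav}: every point $\mu(x,y,z)$ lies in $[x,y]_{\kappavv}$, and more to the point $[a,b]\subseteq[a,b]_{\kappavv}$. Conversely, if $y\in[a,b]_{\kappavv}$ then $y\thicksim_{\kappavv}\mu(a,y,b)\in[a,b]$, so $[a,b]$ and $[a,b]_{\kappavv}$ are within Hausdorff distance $\kappavv$ of one another; consequently for any $\zeta$, $\mathcal{N}_\zeta([a,b])\subseteq\mathcal{N}_{\zeta+\kappavv}([a,b]_{\kappavv})$ and similarly in the other direction with a shift by $\kappavv$. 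So it suffices to prove the thin-triangle inclusion with $[a,b]_{\kappavv}$-intervals and a possibly different constant, and the final bookkeeping absorbs the $\kappavv$ shifts into a constant of the advertised shape $2H(4)+\zeta$ (adjusting $H$ if necessary, which the statement's phrasing "under parameters $\rho,H$" permits).

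Next I would invoke rank $1$ concretely. Take $z\in\mathcal{N}_\zeta([a,b])$, so there is $w\in[a,b]$ with $d(z,w)\leqslant\zeta$; write $w=\mu(a,u,b)$ for some $u$. Now apply axiom (C2)' — or more conveniently the full (C2) via Lemma \ref{median rank assump} — to the finite set $\{a,b,c,u\}$ (four points), obtaining a rank $1$ median algebra $(\Pi,\mu_\Pi)$ and maps $\pi,\lambda$ with $\lambda\pi$ the inclusion and quasi-morphism constant $H(4)$. In a rank $1$ median algebra, i.e. a tree-like median algebra, the interval $[\pi a,\pi b]$ is covered by $[\pi a,\pi c]\cup[\pi c,\pi b]$ whenever $\pi c$ can be inserted — indeed in any median algebra, the point $m(\pi a,\pi c,\pi b)$ lies on $[\pi a,\pi b]$, and in rank $1$ every point of $[\pi a,\pi b]$ lies in $[\pi a,m]\cup[m,\pi b]$; since $[\pi a,m]\subseteq[\pi a,\pi c]$ and $[m,\pi b]\subseteq[\pi c,\pi b]$ we get the exact tree inclusion $[\pi a,\pi b]=[\pi a,\pi c]\cup[\pi c,\pi b]$. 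Hence $\pi w\in[\pi a,\pi c]$ or $\pi w\in[\pi c,\pi b]$; say the former, so $\mu_\Pi(\pi a,\pi w,\pi c)=\pi w$, hence applying $\lambda$ and using that it is an $H(4)$-quasi-morphism, $\mu(a,w,c)=\mu(\lambda\pi a,\lambda\pi w,\lambda\pi c)\thicksim_{H(4)}\lambda\mu_\Pi(\pi a,\pi w,\pi c)=\lambda\pi w=w$. Thus $w\thicksim_{H(4)}\mu(a,w,c)\in[a,c]$, giving $z\in\mathcal{N}_{\zeta+H(4)}([a,c])$, and symmetrically $z\in\mathcal{N}_{\zeta+H(4)}([c,b])$ in the other case. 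Combining with the $\kappavv$-bookkeeping (should I have routed through the coarse intervals) yields the result; routing instead directly through (C2) as just described in fact gives the cleaner constant $\zeta'=\zeta+H(4)$, and the stated $2H(4)+\zeta$ leaves comfortable slack.

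The main obstacle I anticipate is making sure the four-point median algebra really detects the tree inclusion exactly: one must be careful that "rank $1$" as a median algebra forces $[\pi a,\pi b]$ to be linearly ordered between its endpoints (so that any intermediate point $m$ splits the interval), and that the particular $u$ chosen to witness $w\in[a,b]$ is among the four points fed into (C2) so that $\pi w=\pi\mu(a,u,b)$ is comparable to $\mu_\Pi(\pi a,\pi u,\pi b)$ up to $H(4)$ — here I would use the coarse compatibility $\lambda\mu_\Pi(\pi a,\pi u,\pi b)\thicksim_{H(4)}\mu(a,u,b)=w$ together with $\lambda\pi w\thicksim_{H(4)} w$, and absorb the accumulated constants into the final $\zeta'$. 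Everything else is routine application of (C1) and the symmetry/localisation from Remark \ref{median assump}.
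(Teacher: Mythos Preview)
Your approach is essentially the paper's own: feed the four points $\{a,b,c,u\}$ into axiom~(C2) to get a rank~$1$ (tree) approximation $(\Pi,\mu_\Pi,\pi,\lambda)$ with $\lambda\pi$ the inclusion, observe that in a tree the point $\bar m:=\mu_\Pi(\pi a,\pi u,\pi b)\in[\pi a,\pi b]$ lies in $[\pi a,\pi c]$ or in $[\pi c,\pi b]$, and push this back to $X$ via the $H(4)$-quasi-morphism~$\lambda$. The detour through $[\cdot,\cdot]_{\kappav}$ in your first paragraph is unnecessary and the paper does not take it.

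One genuine slip: the symbol $\pi w$ is undefined, since $w=\mu(a,u,b)$ need not lie in $A=\{a,b,c,u\}$, so the chain $\mu(a,w,c)=\mu(\lambda\pi a,\lambda\pi w,\lambda\pi c)\thicksim_{H(4)}\lambda\mu_\Pi(\pi a,\pi w,\pi c)=\lambda\pi w=w$ is not valid as written. You correctly anticipate this in your final paragraph, and the fix is exactly what the paper does: work with $\bar m=\mu_\Pi(\pi a,\pi u,\pi b)\in\Pi$ rather than a nonexistent $\pi w$. Then $\lambda\bar m\thicksim_{H(4)}\mu(a,u,b)=w$ by the quasi-morphism property, and (assuming $\bar m\in[\pi a,\pi c]$) $\mu(a,c,\lambda\bar m)\thicksim_{H(4)}\lambda\mu_\Pi(\pi a,\pi c,\bar m)=\lambda\bar m$. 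Combining gives $z\thicksim_\zeta w\thicksim_{2H(4)}\mu(a,c,\lambda\bar m)\in[a,c]$, so the constant is $\zeta'=\zeta+2H(4)$, not the $\zeta+H(4)$ you claim; the second $H(4)$ is unavoidable because one application of the quasi-morphism bound is needed to relate $\lambda\bar m$ to $w$, and a second to relate $\lambda\bar m$ to a point of $[a,c]$.
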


\begin{proof}
For any $y\in \mathcal{N}_{\zeta}([a,b])$, there exists some $x\in X$, such that $\mu(a,b,x) \thicksim_\zeta y$, and we set $A=\{a,b,c,x\}$.  By Definition \ref{def for cma} and Definition \ref{convention} there exists a finite rank 1  median algebra $(\Pi, \mu_{\Pi})$ (i.e., a tree) and maps $\pi\colon A \rightarrow \Pi$, $\lambda\colon  \Pi \rightarrow X$ satisfying the conditions in (C2); furthermore $\lambda \circ \pi$ is the inclusion $A \hookrightarrow X$. Denote $\pi(a), \pi(b), \pi(c), \pi(x)$ by $\overline{a}, \overline{b}, \overline{c}, \overline{x}$ respectively.

Set $\overline{m}=\mu_{\Pi}(\overline{a},\overline{b},\overline{x})$, then $\mu_{\Pi}(\overline{a},\overline{b},\overline{m})=\overline{m}$. Since $\Pi$ is a tree, we have:
$$\mu_{\Pi}(\overline{a},\overline{c},\overline{m})=\overline{m} \quad\mbox{or}\quad \mu_{\Pi}(\overline{b},\overline{c},\overline{m})=\overline{m}.$$
Without loss of generality, assume the former, so
$$\lambda(\overline{m})=\lambda(\mu_{\Pi}(\overline{a},\overline{c},\overline{m})) \thicksim_{H(4)} \mu(a,c,\lambda(\overline{m})).$$
On the other hand, by the definition of $\overline{m}$, we have:
$$\lambda(\overline{m}) = \lambda(\mu_{\Pi}(\overline{a},\overline{b},\overline{x})) \thicksim_{H(4)} \mu(a,b,x) \thicksim_\zeta y.$$
Combine the above two estimates:
$$y \thicksim_{2H(4)+\zeta} \mu(a,c,\lambda(\overline{m})),$$
which implies $y\in \mathcal{N}_{2H(4)+\zeta}([a,c])$. Finally, take $\zeta'=2H(4)+\zeta$, then the lemma holds.
\end{proof}

Recall that in a hyperbolic space, any point on a geodesic from $a$ to $b$ sits logarithmically far, with respect to path-length,  from any path connecting $a$ and $b$. The analogous statement holds for rank $1$ spaces, if one replaces geodesics with intervals:
\begin{prop}\label{control}
Let $(X,d,\mu)$ be a geodesic coarse median space with rank 1, $\gamma$ a rectifiable path in $X$ connecting $a,b \in X$ and $\ell(\gamma)$ the length of $\gamma$, then there exist constants $C_1,C_2$ such that
$$[a,b] \subseteq \mathcal{N}_{C_1\log_2\ell(\gamma)+C_2}(\mathrm{Im}~\gamma).$$
\end{prop}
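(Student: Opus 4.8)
\textbf{Proof plan for Proposition \ref{control}.}

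The plan is to mimic the classical ``logarithmic divergence'' argument for hyperbolic spaces, but carried out entirely at the level of intervals using Lemma \ref{hyp trg for intvl} in place of the thin-triangles property for geodesics. First I would fix a point $w\in[a,b]$; the goal is to show $w$ lies within $C_1\log_2\ell(\gamma)+C_2$ of $\mathrm{Im}\,\gamma$. Choose a sequence of points $a=p_0,p_1,\ldots,p_N=b$ along $\gamma$, where consecutive points are at path-distance (hence at metric distance) at most $1$ apart, so that $N\leqslant\lceil\ell(\gamma)\rceil$; in fact one takes $N$ to be the least power of $2$ exceeding $\ell(\gamma)$ so that a clean dyadic subdivision is available, contributing only an additive constant.

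The core is a dyadic descent. Repeatedly applying Lemma \ref{hyp trg for intvl} to the concatenation, one shows by induction on $k$ that
\[
[a,b]\subseteq \bigcup_{j} \mathcal{N}_{k\zeta'_0+c_0}\big([p_{j2^{-k}N},\,p_{(j+1)2^{-k}N}]\big),
\]
where $\zeta'_0=2H(4)$ is the additive loss in a single application of Lemma \ref{hyp trg for intvl} and $c_0$ absorbs the replacement of $\mu(a,b,x)$ by $y$; here each step splits an interval $[p_i,p_j]$ at its midpoint $p_{(i+j)/2}$ along $\gamma$, which indeed lies in $[p_i,p_j]$ up to a bounded error that can be folded into the constants using Lemma \ref{kappav} and axiom (C1). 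After $k=\log_2 N$ steps the subdivision reaches intervals of the form $[p_i,p_{i+1}]$ with $d(p_i,p_{i+1})\leqslant 1$; since $[p_i,p_{i+1}]$ has uniformly bounded diameter (again by $\mu(p_i,x,p_{i+1})\thicksim_{\kappavv}p_i$ and (C1), its diameter is bounded in terms of $d(p_i,p_{i+1})$ and $\rho$), the point $w$ is within a bounded distance of $p_i\in\mathrm{Im}\,\gamma$. Tracking the constants gives the bound with $C_1$ proportional to $\zeta'_0=2H(4)$ and $C_2$ depending only on $\rho,H$.

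The step I expect to be the main obstacle is the bookkeeping in the inductive splitting: one must verify that each application of Lemma \ref{hyp trg for intvl} at the chosen dyadic midpoint is legitimate, i.e. that the midpoint $p_m$ of $[p_i,p_j]$ (taken along $\gamma$) is close enough to being genuinely in the coarse interval $[p_i,p_j]$ that Lemma \ref{hyp trg for intvl} applies with only an additive constant, and that the neighbourhood radii accumulate additively (one $\zeta'_0$ per level) rather than multiplicatively. The additive accumulation is what produces the $\log$ rather than a polynomial or exponential bound, so care is needed: the radius after $k$ levels is $k\zeta'_0$ plus a fixed constant, not $(\zeta')^k$, precisely because Lemma \ref{hyp trg for intvl} allows us to fix the starting radius $\zeta$ and only adds $2H(4)$ each time. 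Once this is set up correctly the final estimate with $k=\lceil\log_2\ell(\gamma)\rceil+O(1)$ is immediate.
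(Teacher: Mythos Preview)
Your approach is essentially the same as the paper's: a dyadic subdivision of $\gamma$ combined with repeated application of Lemma~\ref{hyp trg for intvl}, accumulating an additive $2H(4)$ at each of $\lceil\log_2\ell(\gamma)\rceil$ levels, terminating at subintervals whose endpoints are at distance $\leqslant 1$ and hence have uniformly bounded diameter by (C1). The paper runs the same induction but phrases it as an induction on $N$ with $\ell(\gamma)=2^N$ rather than tracking a single point $w$.

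Your anticipated ``main obstacle'' is not actually an obstacle. Lemma~\ref{hyp trg for intvl} is stated for \emph{any} $a,b,c\in X$: the point $c$ need not lie in, or near, the interval $[a,b]$. So at each dyadic step you may take $c$ to be the midpoint $p_m$ along $\gamma$ without any requirement that $p_m$ be close to $[p_i,p_j]$; the invocations of Lemma~\ref{kappav} and the extra constant $c_0$ you propose are unnecessary. This simplifies the bookkeeping considerably and makes the additive accumulation immediate. One small correction: to bound the diameter of $[p_i,p_{i+1}]$ at the bottom level you should use (C1) directly, namely $\mu(p_i,x,p_{i+1})\thicksim_{Kd(p_i,p_{i+1})+H_0}\mu(p_i,x,p_i)=p_i$, giving a bound of $K+H_0$; the constant $\kappav$ is not the relevant one here.
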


\begin{proof}
Let $K,H_0,H$ be parameters of $(X,d,\mu)$ under which $\rank X \leqslant 1$ can be achieved. Without loss of generality we will assume that $\gamma$ has been reparameterised to have unit speed. At the cost of varying the constants $C_1, C_2$ we can simplify the argument, again without loss of generality, and  assume $\ell(\gamma)=2^N$ for some integer $N$. We will argue by induction on $N$.

$N=0$: $\ell(\gamma)=1$, which implies $d(a,b)\leqslant 1$. For any $c\in [a,b]$, there exists some $x\in X$ such that $c=\mu(a,b,x)$. Now by axiom (C1), we have:
$$c=\mu(a,b,x)\thicksim_{K+H_0}\mu(a,a,x)=a,$$
which implies $c\in B(a,K+H_0)$. So $[a,b] \subseteq \mathcal{N}_{K+H_0}(\gamma)$ for $\ell(\gamma)=1$. We set $s_1=K+H_0$.

Induction step: Assume that we have established that for all pairs $a,b\in X$ and rectifiable paths $\gamma'$ connecting them of length $2^{N-1}$, there is a constant $s_{N-1}$ such that  $[a,b] \subseteq \mathcal{N}_{s_{N-1}}(\mathrm{Im}~\gamma')$. Now consider a path $\gamma$ of $\ell(\gamma)=2^N$ joining $a$ to $b$. We will denote  the midpoint of $\gamma$ by $c=\gamma(2^{N-1})$. By Lemma \ref{hyp trg for intvl}, we have:
$$[a,b] \subseteq \mathcal{N}_{2H(4)}([a,c]) \cup \mathcal{N}_{2H(4)}([c,b]).$$
By our inductive hypothesis, we have:
$$[a,c] \subseteq \mathcal{N}_{s_{N-1}}(\gamma([0,2^{N-1}])) \quad \mbox{and} \quad [c,b] \subseteq \mathcal{N}_{s_{N-1}}(\gamma([2^{N-1},2^N])),$$
which implies $[a,b] \subseteq \mathcal{N}_{2H(4)}([a,c]) \cup \mathcal{N}_{2H(4)}([c,b]) \subseteq \mathcal{N}_{s_{N-1}+2H(4)}(\mathrm{Im}~\gamma)$, i.e. we can take $s_N=s_{N-1}+2H(4)$, in other words, $s_N=K+H_0+(N-1)\cdot 2H(4)$.
In conclusion, take $C_1=2H(4)$ and $C_2=K+H_0-2H(4)$, we have $[a,b] \subseteq \mathcal{N}_{C_1\log_2\ell(\gamma)+C_2}(\mathrm{Im}~\gamma)$.
\end{proof}

As in the hyperbolic case, we need the following lemma to tame quasi-geodesics, replacing an arbitrary quasi-geodesic by a rectifiable quasi-geodesic close to the original.

\begin{lem}[\cite{BH99}]\label{taming geodesics}
Let $(X,d)$ be a geodesic metric space. Given any $(\zeta,\varepsilon)$-quasi-geodesic $\gamma \colon [s,t] \rightarrow X$, one can find a continuous $(\zeta,\varepsilon')$-quasi-geodesic $\gamma' \colon [s,t] \rightarrow X$ such that
\begin{enumerate}[1)]
  \item $\gamma(s)=\gamma'(s)$, $\gamma(t)=\gamma'(t)$;
  \item $\varepsilon'=2(\zeta+\varepsilon)$;
  \item $\ell(\gamma'|_{[s',t']}) \leqslant k_1 d(\gamma'(s'),\gamma'(t'))+k_2$ for all $s',t'\in [s,t]$, where $k_1=\zeta(\zeta+\varepsilon)$ and $k_2=(\zeta \varepsilon'+3)(\zeta+\varepsilon)$;
  \item the Hausdorff distance between $\mathrm{Im}\gamma$ and $\mathrm{Im}\gamma'$ is less than $\zeta+\varepsilon$.
\end{enumerate}
\end{lem}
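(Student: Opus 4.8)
The plan is to use the classical ``sample along an integer net, then join by geodesic segments'' construction, exactly as in \cite{BH99}. First I would set $\Sigma=\{s,t\}\cup(\mathbb Z\cap[s,t])$ and enumerate its points as $s=x_0<x_1<\dots<x_N=t$, so that $x_{i+1}-x_i\leqslant 1$ for every $i$ (with equality except possibly at the two end intervals). Since $(X,d)$ is geodesic, for each $i$ I would choose a geodesic segment in $X$ from $\gamma(x_i)$ to $\gamma(x_{i+1})$ and let $\gamma'\colon[s,t]\to X$ be the path which traverses the $i$-th such segment at constant speed on $[x_i,x_{i+1}]$. Then $\gamma'$ is continuous, piecewise geodesic, hence rectifiable on the compact interval $[s,t]$, and $\gamma'(s)=\gamma(s)$, $\gamma'(t)=\gamma(t)$, which is property (1).

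The workhorse for everything else is the one-step estimate
\[
\ell\bigl(\gamma'|_{[x_i,x_{i+1}]}\bigr)=d(\gamma(x_i),\gamma(x_{i+1}))\leqslant \zeta(x_{i+1}-x_i)+\varepsilon\leqslant \zeta+\varepsilon,
\]
which in particular puts both $\gamma'(u)$ and $\gamma(u)$ within $\zeta+\varepsilon$ of the common point $\gamma(x_i)=\gamma'(x_i)$ whenever $u\in[x_i,x_{i+1}]$. Property (4) is then immediate: each point of $\mathrm{Im}\gamma'$ is $(\zeta+\varepsilon)$-close to some $\gamma(x_i)\in\mathrm{Im}\gamma$, and symmetrically each point $\gamma(u)$ lies within $\zeta+\varepsilon$ of $\gamma'(x_i)\in\mathrm{Im}\gamma'$. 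For the quasi-geodesic inequalities for $\gamma'$, the upper bound comes from summing the one-step estimate over the net intervals met by a sub-interval $[u,v]$, of which there are at most $|u-v|+2$, giving $\ell(\gamma'|_{[u,v]})\leqslant(|u-v|+2)(\zeta+\varepsilon)$; for the lower bound I would estimate, for $u\in[x_i,x_{i+1}]$ and $v\in[x_j,x_{j+1}]$, $d(\gamma'(u),\gamma'(v))\geqslant d(\gamma(x_i),\gamma(x_j))-2(\zeta+\varepsilon)$, apply the quasi-geodesic inequality for $\gamma$ together with $|x_i-x_j|\geqslant|u-v|-1$, and rearrange. Property (3) is then obtained by feeding the resulting lower bound (which converts $|u-v|$ into a linear function of $d(\gamma'(u),\gamma'(v))$) into the length estimate above.

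I do not expect a genuine obstacle here: the construction is routine and the only thing demanding care is the bookkeeping that pins down the precise constants $\varepsilon'=2(\zeta+\varepsilon)$, $k_1=\zeta(\zeta+\varepsilon)$, $k_2=(\zeta\varepsilon'+3)(\zeta+\varepsilon)$ in items (2)--(3) --- in particular treating the two possibly-short end intervals of $\Sigma$ correctly, and observing that the lower quasi-geodesic estimate is not spoiled on net intervals where $\gamma$ back-tracks (so that $\gamma'$ runs slowly there, or is even constant, without violating the required inequality since such an interval has domain length at most $1$). Since this is precisely \cite[Ch.~III.H]{BH99}, one may alternatively simply quote that reference.
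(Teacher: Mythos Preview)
The paper does not prove this lemma at all; it simply records the statement and cites \cite{BH99} as the source. Your proposal correctly identifies this and sketches exactly the standard Bridson--Haefliger construction (sample $\gamma$ on an integer net and join by geodesic segments), so you are in complete agreement with the paper's approach, which is to quote the reference.
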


We also need the following two lemmas which hold trivially in the median case. Recall in a coarse median space with given parameters $\rho,H$, we defined the constant $\kappav=\rho(H(5))+\rho(2H(5))+2H(5)$.

\begin{lem}\label{coarse estimate}
Let $(X,d,\mu)$ be a coarse median space with parameters $\rho,H$. Then there exists a constant $C=\rho(\kappav)+\kappav$ such that for all $a,b\in X$, $x \in [a,b]$, $y\in [a,x]$ and $z\in [x,b]$, we have $x\in\mathcal{N}_C([y,z])$.
\end{lem}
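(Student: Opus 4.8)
The statement to prove is: for $a,b\in X$, $x\in[a,b]$, $y\in[a,x]$ and $z\in[x,b]$, the point $x$ lies in a uniformly bounded neighbourhood of $[y,z]$. The key observation is that in a genuine median algebra one has $x = m(y,x,z)$ whenever $y\in[a,x]$ and $z\in[x,b]$ with $x\in[a,b]$, because the interval containments force $x$ to lie on the interval between $y$ and $z$. Indeed if $y\in[a,x]$ then $y = m(a,y,x)$, so $m(y,x,z)$ can be rewritten using $y=m(a,y,x)$ and then the five point condition to pull the median of $y,x,z$ back towards $a$; symmetrically $z=m(x,z,b)$ pushes it towards $b$; combining, $m(y,x,z)$ is seen to equal $m(a,x,b)=x$. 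The coarse version simply tracks the additive error through each of these identities.

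Concretely, I would argue as follows. Since $y\in[a,x]$, by Lemma \ref{kappav} (applied with the roles giving $y\sim_{\kappavv}\mu(a,y,x)$, or directly from the definition of the interval together with the $\kappavv$ estimate) we have $y\thicksim_{\kappavv}\mu(a,y,x)$; similarly $z\thicksim_{\kappavv}\mu(b,z,x)$ and $x\thicksim_{\kappavv}\mu(a,x,b)$. Now estimate
$$
\mu(y,x,z)\ \thicksim_{2\rho(\kappavv)}\ \mu\bigl(\mu(a,y,x),\,x,\,\mu(b,z,x)\bigr),
$$
using (C1) to replace $y$ and $z$ by their approximants (two applications, each contributing $\rho(\kappavv)$). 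Then apply the coarse five point condition (\ref{main est}) to the outer median: pulling $x$ inside past $\mu(a,y,x)$ and then recombining, one rewrites the right-hand side, up to an error controlled by $\kappavv$, as $\mu(a,x,b)$ after using the localisation axiom (M1) to collapse the repeated $x$'s (e.g. $\mu(\mu(a,y,x),x,\cdot)$ simplifies since $x$ appears in both the first two slots). Thus $\mu(y,x,z)\thicksim_{C'}\mu(a,x,b)\thicksim_{\kappavv}x$ for a constant $C'$ built from $\rho$ and $\kappavv$. Finally $\mu(y,x,z)\in[y,z]$ by definition of the interval, so $x$ is within $C' + \kappavv$ of $[y,z]$, and one sets $C$ accordingly — matching the stated form $C=\rho(\kappavv)+\kappavv$ after a careful bookkeeping of which estimates are actually needed (the cleanest route uses that $y\in[a,x]$ gives $\mu(a,x,y)\thicksim_{\kappavv}y$ hence $\mu(y,x,z)\thicksim_{\rho(\kappavv)}\mu(\mu(a,x,y),x,z)$, then (M1)-type collapse, then one more $\kappavv$).

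The main obstacle is purely combinatorial: choosing the order in which to apply the coarse five point condition and the localisation axiom so that the repeated occurrences of $x$ collapse cleanly and the accumulated constant is genuinely $\rho(\kappavv)+\kappavv$ rather than something larger. In the exact median case there is no error at all, so the content is entirely in verifying that a single application of (C1) and a single application of the $\kappavv$-estimate suffice — i.e. that the chain of exact identities $\mu(y,x,z)=\mu(\mu(a,x,y),x,z)=\mu(a,x,\mu(x,x,z))$ wait, rather $=\mu(a,x,z)$ and then, using $z\in[x,b]$ symmetrically is \emph{not} needed because $\mu(a,x,z)$ with $z\in[x,b]\subseteq$ already-on-the-$a$-to-$b$ side collapses directly to $x$ — needs to be organised so that only the two estimates in the statement's constant are incurred. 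I would double-check that one does not in fact need both $y\in[a,x]$ and $z\in[x,b]$ for the error bound claimed; plausibly only one containment plus $x\in[a,b]$ is used, with the other giving symmetry, which is consistent with the small stated constant.
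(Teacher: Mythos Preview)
Your strategy---showing that $\mu(y,x,z)\in[y,z]$ is uniformly close to $x$---is sound and does prove the qualitative statement, but it differs from the paper's argument in one crucial respect that prevents you from reaching the stated constant $C=\rho(\kappav)+\kappav$.

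The paper does \emph{not} use $\mu(y,x,z)$ as the nearby point in $[y,z]$. Instead it uses the exact witnesses furnished by the definition of interval: writing $x=\mu(a,w,b)$, $y=\mu(a,v,x)$, $z=\mu(x,v',b)$ (these are equalities, not $\kappav$-approximations), it shows that $\mu(y,w,z)$ is close to $x$. The point is that with $w$ in the middle slot one gets, by a single application of the five-point estimate~(\ref{main est}),
\[
\mu(y,w,b)=\mu(\mu(a,v,x),w,b)\thicksim_{\kappav}\mu(\mu(a,w,b),\mu(v,w,b),x)=\mu(x,\mu(v,w,b),x)=x,
\]
because $\mu(a,w,b)=x$ \emph{exactly}. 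A second five-point application and one use of (C1) then give $\mu(y,w,z)\thicksim_{\rho(\kappav)+\kappav}x$.

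Your route via $\mu(y,x,z)$ unavoidably passes through the approximation $\mu(a,x,b)\thicksim_{\kappav}x$ (from Lemma~\ref{kappav}) in place of an exact collapse, and this extra $\kappav$ propagates through the subsequent (C1) step. Tracing your outline carefully one obtains something like $\kappav+\rho(\kappav+\rho(\kappav))$, not $\rho(\kappav)+\kappav$; the ``careful bookkeeping'' you anticipate cannot eliminate this, since the approximation is intrinsic to using $x$ rather than its witness $w$ as the middle term. Relatedly, your first step---invoking Lemma~\ref{kappav} to write $y\thicksim_{\kappav}\mu(a,y,x)$ etc.---is already lossy: the definition of $[a,x]$ gives $y=\mu(a,v,x)$ on the nose for some $v$, and the paper exploits these exact equalities throughout.

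So: your argument is correct up to the value of the constant, and for the sole application (the proof of Theorem~\ref{rank1}) any uniform constant suffices. But as a proof of the lemma \emph{as stated}, with $C=\rho(\kappav)+\kappav$, the missing idea is to take the median with the witness $w$ rather than with $x$ itself.
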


\begin{proof}
By definition there exist $w,v,v'\in X$ such that $x=\mu(a,w,b)$, $y=\mu(a,v,x)$ and $z=\mu(x,v',b)$. Now
$$\mu(y,w,b)=\mu(\mu(a,v,x),w,b) \thicksim_\kappav \mu(\mu(a,w,b),\mu(v,w,b),x)=\mu(x,\mu(v,w,b),x)=x,$$
which implies
\begin{eqnarray*}
\mu(y,w,z) &=& \mu(y,w,\mu(x,v',b)) \quad \thicksim_\kappav \quad  \mu(x,\mu(y,w,v'),\mu(y,w,b)) \\
  & \thicksim_{\rho(\kappav)}&  \mu(x,\mu(y,w,v'),x) \quad = \quad x.
\end{eqnarray*}
In other words, $x\in \mathcal{N}_{\rho(\kappav)+\kappav}(\{\mu(y,w,z)\})\subset \mathcal{N}_{\rho(\kappav)+\kappav}([y,z])$. Take $C=\rho(\kappav)+\kappav$ and the result holds.
\end{proof}

The next lemma generalises Lemma \ref{redefine cma on intvl} to the context of coarse neighbourhoods of intervals $\mathcal{N}_\lambda([a,b])$.
\begin{lem}\label{estimate}
Let $(X,d,\mu)$ be a coarse median space with parameters $\rho,H$, let $\lambda$ be a positive constant and $a,b$ be points in $X$. Then $x\in \mathcal{N}_\lambda([a,b])$ implies that $\mu(a,b,x)\thicksim_{\rho(\lambda)+\lambda+\kappav} x$.
\end{lem}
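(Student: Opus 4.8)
The plan is to reduce the statement to the situation already handled in Lemma \ref{redefine cma on intvl} (the case $\lambda = 0$, i.e. $x \in [a,b]$) by replacing $x$ with its median projection onto $[a,b]$ and controlling the error introduced. Suppose $x \in \mathcal{N}_\lambda([a,b])$, so there is some $x' \in [a,b]$ with $d(x,x') \leqslant \lambda$; write $x' = \mu(a,b,w)$ for a suitable $w \in X$. First I would observe that $x'\in[a,b]$ already lies in $[a,b]_\kappavv$ by Lemma \ref{kappav}, so Lemma \ref{redefine cma on intvl} gives $\mu(a,b,x')\thicksim_{C_0} x'$ for the universal constant $C_0$ from that lemma (indeed $\mu(a,b,x') = \mu(a,b,\mu(a,b,w))$, which is within $\kappav$ of $\mu(\mu(a,b,a),\mu(a,b,b),w) = \mu(a,b,w) = x'$ by the five point estimate \eqref{main est}; so one can simply take $C_0 = \kappav$).

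Next I would propagate this through the coarse Lipschitz control. By axiom (C1)' (used in the last variable, which is legitimate after symmetrising via $\kappao$, or directly via (C1)), $d\big(\mu(a,b,x),\mu(a,b,x')\big) \leqslant \rho(d(x,x')) \leqslant \rho(\lambda)$, since $\rho$ is increasing. Combining the two estimates with the triangle inequality:
$$d\big(\mu(a,b,x), x\big) \leqslant d\big(\mu(a,b,x),\mu(a,b,x')\big) + d\big(\mu(a,b,x'), x'\big) + d(x', x) \leqslant \rho(\lambda) + \kappav + \lambda,$$
which is exactly the claimed bound $\mu(a,b,x)\thicksim_{\rho(\lambda)+\lambda+\kappav} x$.

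This argument is essentially routine once the right intermediate point is chosen; there is no serious obstacle. The only point requiring a little care is justifying the bound $\mu(a,b,x')\thicksim_{\kappav} x'$ for $x'\in[a,b]$: one should not invoke the full strength of Lemma \ref{redefine cma on intvl} (which is about the redefined median $\mu_{a,b}$ being close to $\mu$ on triples in the interval) but rather just its underlying computation, namely that for $x' = \mu(a,b,w)$ the five point estimate \eqref{main est} gives $\mu(a,b,\mu(a,b,w)) \thicksim_{\kappav} \mu(\mu(a,b,a),\mu(a,b,b),w) = \mu(a,b,w)$. With that in hand the rest is the triangle inequality and monotonicity of $\rho$.
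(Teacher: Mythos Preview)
Your proof is correct and follows essentially the same route as the paper: pick $x'=\mu(a,b,w)\in[a,b]$ with $d(x,x')\leqslant\lambda$, use the five point estimate \eqref{main est} to get $\mu(a,b,x')\thicksim_{\kappav}x'$, use (C1) to get $\mu(a,b,x)\thicksim_{\rho(\lambda)}\mu(a,b,x')$, and combine via the triangle inequality. The paper's proof is the same chain of estimates written in a single line.
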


\begin{proof}
By definition, $x\thicksim_\lambda x'\in[a,b]$, which implies there exists $z\in X$ such that $x'=\mu(a,b,z)$. So
$$\mu(a,b,x)\thicksim_{\rho(\lambda)}\mu(a,b,\mu(a,b,z))\thicksim_\kappav\mu(a,b,z)=x'\thicksim_\lambda x.$$
\end{proof}

\begin{proof}[Proof of Theorem \ref{rank1}]
Take a $(\zeta, \varepsilon)$-quasi-geodesic connecting points $a$ and $b$. By Lemma \ref{taming geodesics}, we may, at the cost of moving the quasi-geodesic at most $\zeta+\varepsilon$, obtain a $(\zeta,\varepsilon)$-quasi-geodesic $\gamma$ which is continuous and such that there exist constants $k_1$ and $k_2$ depending only on $\zeta, \varepsilon$, such that for any $s',t' \in [s,t]$, $\ell(\gamma|_{[s',t']}) \leqslant k_1 d(\gamma(s'),\gamma(t'))+k_2$. Let $K,H_0,H$ be parameters of $(X,d,\mu)$ under which $\rank X \leqslant 1$ can be achieved.

We will first show that the interval $[a,b]$ sits within a uniformly bounded neighbourhood of $\gamma$.

Let $D=\sup \{d( x, \mathrm{Im}\gamma): {x\in[a,b]} \}$, which is finite since it is bounded by $\sup\{d(x,a)\mid x\in [a,b]\}\leq K d(a,b)+H_0$. Indeed for $x=\mu(a,b,c)\in [a,b]$, we have
$$x=\mu(a,b,c) \thicksim_{Kd(a,b)+H_0}\mu(a,a,c)=a.$$

Let $x_0$ be a point in $[a,b]$ such that $d(x_0, \mathrm{Im}~\gamma)>D-1$. We have $\mu(a,b,x_0)\thicksim_{\kappav} x_0$ by Lemma \ref{kappav}. Now if $y\in [a,x_0]$ then $y=\mu(x_0,a,z)$ for some $z\in X$, so
\[
\mu(b,a,y)=\mu(b,a, \mu(x_0,a,z))\thicksim_\kappaiv \mu(\mu(b,a,x_0),a,z)\thicksim_{K\kappav+H_0}\mu(x_0,a,z)=y.
\]
Hence $y\in \mathcal N_\lambda([a,b])$ where $\lambda=K\kappav+\kappaiv+H_0$.

\emph{Claim:} If $d(x_0,a) \geqslant 2D+\lambda$, then there exists some $y \in [x_0,a]$ such that $2D+\lambda \leqslant d(x_0,y) \leqslant 2D+\lambda+K+H_0$.

\emph{Proof of Claim:} First take a geodesic $\gamma':[0, d(x_0,a)]\rightarrow X$ from $x_0$ to $a$. We will approximate $\gamma'$ by a discrete geodesic as follows. Let $x_i=\gamma'(i)$ for $i=0, \dots,  k=\lfloor d(a,x_0)\rfloor$ and let $y_i$ denote the projection of $x_i$ into the interval $[x_0,a]$, i.e., $y_i=\mu(a,x_i,x_0)$. Then for each $i$, $d(y_i,y_{i+1}) \leqslant K+H_0$. If $d(y_j,x_0) \geqslant 2D+\lambda$, for some $j$ then let $i$ be the first $j$ for which this occurs and set $y=y_i$. Since $d(y_{i-1},x_0) < 2D+\lambda$ and $d(x_{i-1}, x_i)=1$ we have $d(x_0,y) =d(x_0, y_i)<2D+\lambda+K+H_0$. Otherwise set $y=a$ and note that $d(y_{k},x_0) \leqslant 2D+\lambda$ so again $d(x_0,y) =d(x_0, a)\leqslant 2D+\lambda+K+H_0$. This completes the proof of the claim.

As shown above $y\in \mathcal N_\lambda([a,b])$, so we can find a point $y'\in [a,b]$ such that $d(y',y)\leqslant \lambda$. Hence by the claim, we have $2D\leqslant d(y',x_0)\leqslant 2D+2\lambda+K+H_0$.

If on the other hand $d(x_0,a) < 2D+\lambda$, then we take $y=y'=a$ and we have $D-1< d(y',x_0)\leqslant 2D+\lambda$.

Repeating the argument with $b$ in place of $a$, we can find points $z\in [x_0,b], z'\in [a,b]$ with: $2D+\lambda \leqslant d(x_0,z) \leqslant 2D+\lambda+K+H_0$ and $d(z,z')\leqslant \lambda$ if $d(x_0,b) \geqslant 2D+\lambda$; $z=z'=b$ with $D-1< d(z',x_0)\leqslant 2D+\lambda$ otherwise.

If $y'=a$ then we set $y''=a$, otherwise by the definition of $D$, there exists a point $y''\in \mathrm{Im}\gamma$ such that $d(y'',y') \leqslant D$, and we can choose a geodesic segment $\alpha$ from $y'$ to $y''$. Similarly if $z'=b$ then we set $z''=b$, otherwise there exists a point $z''\in \mathrm{Im}\gamma$ such that $d(z'',z') \leqslant D$, and we choose a geodesic segment $\beta$ from $z''$ to $z'$.

Now consider the path $\vartheta$ from $y'$ to $z'$ that transverses $\alpha$ then follows $\gamma$ from $y''$ to $z''$, then transverses $\beta$ from $z''$ to $z'$, see Figure \ref{fig:rank1}. We have:
\begin{eqnarray*}
d(y'',z'')&\leqslant& d(y'',y')+d(y',x_0)+d(x_0,z')+d(z',z'') \leqslant2D+2\cdot (2D+2\lambda+K+H_0)\\
&=&6D+4\lambda+2(K+H_0),\\
\end{eqnarray*}
which implies, by Lemma \ref{taming geodesics}, that $\ell(\vartheta) \leqslant 2D+ k_1\cdot(6D+4\lambda+2(K+H_0))+k_2=(6k_1+2)D+2k_1(2\lambda+K+H_0)+k_2$.
\begin{figure}[htbp]
  \centering
  \includegraphics[width=12cm]{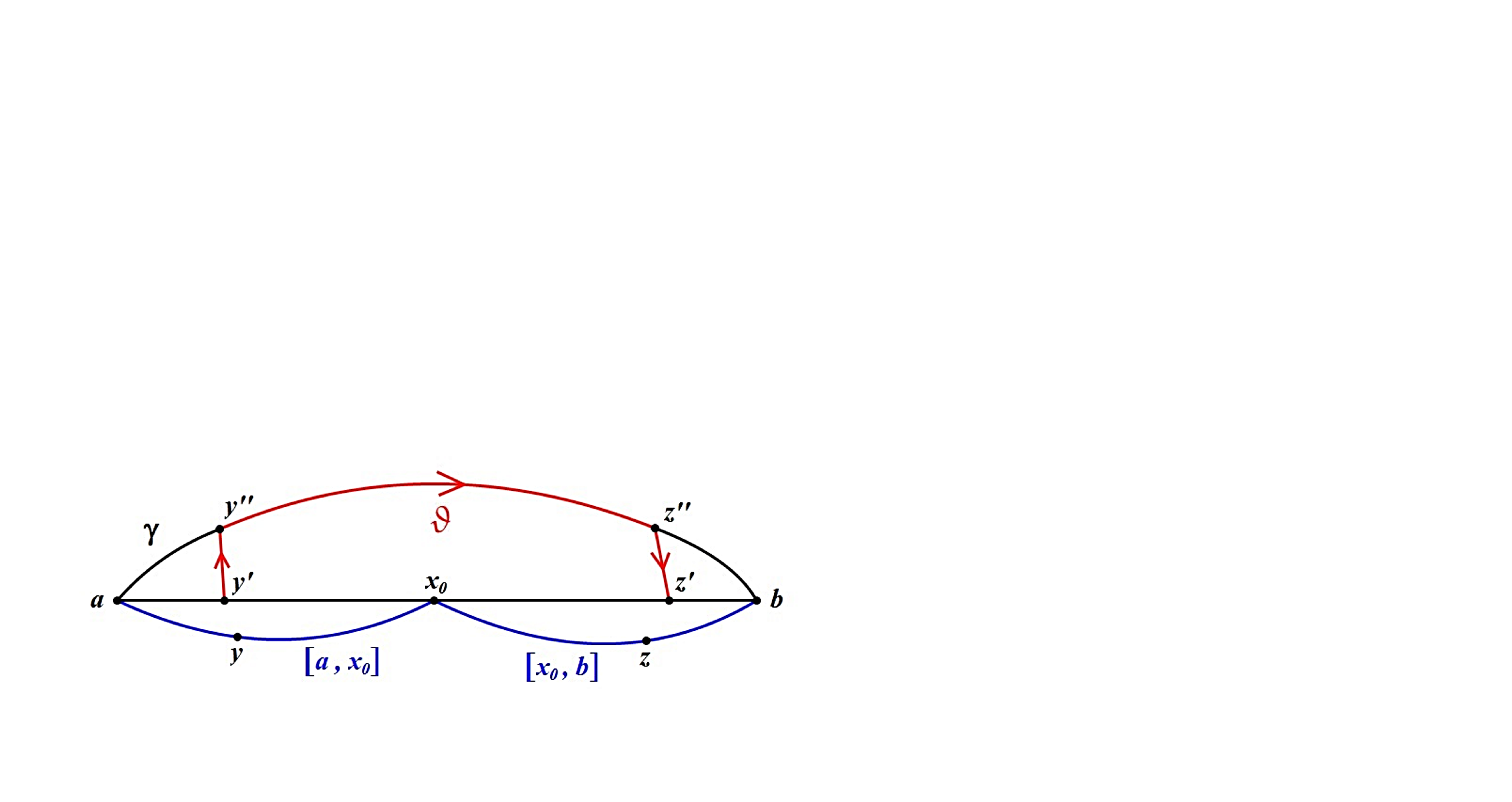}
  \caption{Construction of $\vartheta$} \label{fig:rank1}
\end{figure}

Independent of our choice of the points $y'', z''$, the distance from $x_0$ to any point on $\gamma$ is at least $D-1$. If $y'=a$ then we have set $y''=y'$, so the distance from $x_0$ to the (only) geodesic arc from $y'$ to $y''=y'$ is $d(x_0,a) > D-1$. If on the other hand $y'\not=a$, then by hypothesis $d(x_0,y')\geqslant 2D$, while $d(y',y'')\leqslant D$ so the distance from $x_0$ to the chosen geodesic arc from $y'$ to $y''$ must be at least $D$. A similar argument shows that the distance from $x_0$ to the chosen geodesic arc from $z''$ to $z'$ must be at least $D$ as well, so that $d(x_0, \mathrm{Im}\vartheta) > D-1$.

We would like to use Proposition \ref{control} to obtain an upper bound for $D$, but $x_0$ might not sit inside $[y',z']$. However, by Lemma \ref{coarse estimate}, we know there exists a constant $C$ such that $x_0\in\mathcal{N}_C([y,z])$, which implies there is a point $\mu(y, z, w)\in [y,z]$ which is $C$-close to $x_0$. Now $\mu(y,z,w)\thicksim_{2K\lambda+H_0}\mu(y',z',w)\in [y',z']$ by axiom (C1). Set $x'=\mu(y',z',w)$ and $C'=C+2K\lambda+H_0$ so that $x_0\thicksim_{C'} x'\in [y',z']$.

Hence by Proposition \ref{control},
\begin{eqnarray*}
D-1 &<& d(x_0, \mathrm{Im}\vartheta) \leqslant d(x', \mathrm{Im}\vartheta)+C' \leqslant C_1\log_2\ell(\vartheta)+C_2+C'\\
  &\leqslant& C_1\log_2((6k_1+2)D+2k_1(2\lambda+K+H_0)+k_2)+C_2+C'.
\end{eqnarray*}
The right hand side of the inequalities grows logarithmically fast with respect to $D$, hence there exists some constant $D'=D'(\zeta,\varepsilon)$ such that $D \leqslant D'$. In other words, $[a,b] \subseteq \mathcal{N}_{D'}(\mathrm{Im}\gamma)$.

Now we will show that the quasi-geodesic $\gamma$ sits within a uniformly bounded neighbourhood of the interval  $[a,b]$. Assume there exists some point $x'\in\mathrm{Im}\gamma$ such that $d(x',[a,b]) > D'$. Take a maximal non-empty subinterval $(s_1,t_1) \subseteq [s,t]$ such that $\gamma|_{(s_1,t_1)}$ sits outside $\mathcal{N}_{D'}([a,b])$. As in the first part of the proof, pick a discrete geodesic $a=x_0,x_1,\ldots,x_n=b$ from $a$ to $b$ and set $y_i=\mu(a,b,x_i)$. For each $i$, $y_i\in [a,b]$ and the distance $d(y_i, \mathrm{Im}~\gamma)\leqslant D'$. So either $d(y_i,\mathrm{Im}\gamma|_{[s,s_1]}) \leqslant D'$ or $d(y_i,\mathrm{Im}\gamma|_{[t_1,t]}) \leqslant D'$. Note that $d(y_0,\mathrm{Im}\gamma|_{[s,s_1]}) \leqslant D'$ and $d(y_n,\mathrm{Im}\gamma|_{[t_1,t]}) \leqslant D'$. Take the first $i$ satisfying $d(y_i,\mathrm{Im}\gamma|_{[s,s_1]}) \leqslant D'$ and $d(y_{i+1},\mathrm{Im}\gamma|_{[t_1,t]}) \leqslant D'$, and set $w=y_i$. Then it follows that there exists $s'_1\in[s,s_1]$ and $t'_1\in[t_1,t]$ such that $d(w,\gamma(s'_1)) \leqslant D'$, $d(w,\gamma(t'_1)) \leqslant D'+H_0+K$, which implies $d(\gamma(s'_1),\gamma(t'_1)) \leqslant 2D'+H_0+K$. Since $\gamma$ is assumed to be tame we have $\ell(\gamma|_{[s'_1,t'_1]})\leqslant 2k_1(D'+H_0+K)+k_2$, which implies $\ell(\gamma|_{[s_1,t_1]})\leqslant k_1(2D'+H_0+K)+k_2$. So we have $\mathrm{Im}\gamma \subseteq \mathcal{N}_{k_1(2D'+H_0+K)+k_2+D'}([a,b])$. Finally, take $\widetilde{D}=k_1(2D'+H_0+K)+k_2+D'+\zeta+\varepsilon$, then the Hausdorff distance between $[a,b]$ and the original $(\zeta, \varepsilon)$-quasi-geodesic is controlled by $\widetilde{D}$.
\end{proof}

In our proof of Theorem \ref{char for rank1} we will make use of the following generalisation of Zeidler's parallel edge lemma, \cite[Lemma 2.4.5]{zeidler2013coarse}. The proof in this more general context is more or less identical, and is therefore omitted.

\begin{lem}\label{zeidler}
Let $(X,d)$ be a metric space with a ternary operator $\mu$ satisfying the weak form of axiom (C1) with (arbitrary) control parameter $\rho$. Let $C$ be a CAT(0) cube complex, and $f \colon (C^{(0)},\mu_C) \rightarrow (X,\mu)$ an $L$-quasi-morphism, then for any parallel edges $(x,y),(x',y')$ in $C$, we have
$$d(f(x'),f(y')) \leqslant \rho(d(f(x),f(y)))+2L.$$\qed
\end{lem}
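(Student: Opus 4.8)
The plan is to deduce the estimate from a single application of the weak form of (C1) together with the quasi-morphism property of $f$, after verifying two elementary median identities inside the cube complex $C$.

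First I would set up notation. Let $\hat h$ be the hyperplane of $C$ dual to the edge $(x,y)$; since $(x,y)$ and $(x',y')$ are parallel, $\hat h$ is also the hyperplane dual to $(x',y')$. Relabelling the endpoints of $(x',y')$ if necessary (which does not affect the asserted inequality, as $d$ is symmetric), we may assume that $x$ and $x'$ lie in a common halfspace bounded by $\hat h$, so that $y$ and $y'$ lie in the complementary halfspace.

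The crucial step is the claim that
\[
\mu_C(x,x',y')=x'\qquad\text{and}\qquad \mu_C(y,x',y')=y'.
\]
Recall that in a CAT(0) cube complex the median $\mu_C(a,b,c)$ lies, for every hyperplane, on the side containing at least two of $a,b,c$, and a vertex is determined by the family of halfspaces containing it. For the hyperplane $\hat h$, the points $x,x'$ lie on one side and $y,y'$ on the other, so $\mu_C(x,x',y')$ and $\mu_C(y,x',y')$ lie on the side of $x'$ and of $y'$ respectively. For any hyperplane $\hat k\neq\hat h$, the edge $(x',y')$ is not dual to $\hat k$, hence $x'$ and $y'$ lie on the same side of $\hat k$; this side then contains two of the three points in each triple, so both medians agree with $x'$, respectively with $y'$, across $\hat k$. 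As the medians agree with $x'$ (resp. $y'$) across every hyperplane, they equal $x'$ (resp. $y'$).

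Finally I would combine these identities with the hypotheses. Since $f$ is an $L$-quasi-morphism,
\[
f(x')=f(\mu_C(x,x',y'))\thicksim_L\mu(f(x),f(x'),f(y')),\qquad f(y')=f(\mu_C(y,x',y'))\thicksim_L\mu(f(y),f(x'),f(y')),
\]
and the two displayed medians differ only in their first entry, so the weak form of (C1) gives
\[
d\big(\mu(f(x),f(x'),f(y')),\mu(f(y),f(x'),f(y'))\big)\leqslant\rho\big(d(f(x),f(y))\big).
\]
The triangle inequality then yields $d(f(x'),f(y'))\leqslant\rho(d(f(x),f(y)))+2L$. I do not anticipate a genuine obstacle here: the only delicate point is the orientation bookkeeping that pairs $x$ with $x'$ on the same side of $\hat h$, after which the two median identities are a routine halfspace computation; note in particular that neither monotonicity nor affineness of $\rho$ is needed, which is exactly why the argument persists under the weakened control hypothesis.
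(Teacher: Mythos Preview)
Your argument is correct: the two median identities $\mu_C(x,x',y')=x'$ and $\mu_C(y,x',y')=y'$ follow from the halfspace computation you give, and combining the quasi-morphism estimate with a single application of the weak (C1) axiom and the triangle inequality yields exactly $\rho(d(f(x),f(y)))+2L$. The paper itself omits the proof, deferring to Zeidler's original lemma and noting that the argument carries over unchanged; your proof is precisely the standard one, so there is nothing further to compare.
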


\begin{proof}[Proof of Theorem \ref{char for rank1}]
Necessity is a special case of Lemma \ref{hyp trg for intvl}, so we just need to show sufficiency. Let $\rho,H$ be parameters of $(X,d,\mu)$. By definition and Lemma \ref{median rank assump} for any $p\in \mathbb{N}$ and $A \subseteq X$ with $|A|=p$, there exists a finite median algebra $(\Pi, \mu_{\Pi})$, and maps $\pi \colon A \rightarrow \Pi$, $\sigma \colon \Pi \rightarrow X$ satisfying axioms (C1), (C2), $\langle \pi(A) \rangle=\Pi$ and $\sigma \pi = i_A$. Denote the associated finite CAT(0) cube complex also by $\Pi$. Now we want to modify $\Pi,\pi,\sigma$ so that $\Pi$ has dimension 1.

If $\dim \Pi=1$, nothing needs to be modified. Assume $\dim \Pi \geqslant 2$, i.e. there exists a 2-square $\bar{a},\bar{b},\bar{c},\bar{d}$ in $\Pi$. To be more explicit, assume $\bar{a}$ is connected to $\bar{b}$ and $\bar{c}$ in $\Pi$. Denote $\sigma(\bar{a}),\sigma(\bar{b}),\sigma(\bar{c}),\sigma(\bar{d})$ by $a,b,c,d$. By axiom (C2), we have:
$$b':=\mu(a,b,d)=\mu(\sigma(\bar{a}),\sigma(\bar{b}),\sigma(\bar{d}))\thicksim_{H(p)} \sigma(\mu(\bar{a},\bar{b},\bar{d}))=\sigma(\bar{b})=b.$$
Similarly, $c':=\mu(a,c,d) \thicksim_{H(p)} c$. Since $b'=\mu(a,b,d)\in[a,d]$, by assumption, there exists a constant $\lambda$ such that $[a,d] \subseteq \mathcal{N}_\lambda([a,b']) \cup \mathcal{N}_\lambda([b',d])$. Now since $c'\in [a,d]$, without loss of generality, we can assume that $c'\in \mathcal{N}_\lambda([a,b'])$, which implies $c'\thicksim_{\rho(\lambda)+\lambda+\kappav}\mu(a,b',c')$ by Lemma \ref{estimate}. So
$$c\thicksim_{H(p)}\mu(a,c,d)\thicksim_{\rho(\lambda)+\lambda+\kappav}\mu(a,\mu(a,b,d),\mu(a,c,d))\thicksim_\kappav \mu(a,d,\mu(a,b,c)).$$
On the other hand, $\mu(a,b,c)=\mu(\sigma(\bar{a}),\sigma(\bar{b}),\sigma(\bar{c}))\thicksim_{H(p)}\sigma(\mu(\bar{a},\bar{b},\bar{c}))=a$, so $$\mu(a,d,\mu(a,b,c))\thicksim_{\rho(H(p))}\mu(a,d,a)=a.$$
To sum up, we showed $c$ is $[\rho(H(p))+\rho(\lambda)+H(p)+\lambda+2\kappav)]$-close to $a$. For brevity, if we take $\alpha'(t)=\rho(t)+t+\rho(\lambda)+\lambda+2\kappav$, then $c$ is $\alpha'(H(p))$-close to $a$.

By Lemma \ref{zeidler}, we obtain another function $\alpha(t)=\rho(\alpha'(t))+2t$ such that for any edge $(\bar{x},\bar{y})$ in $\Pi$ parallel to $(\bar{a},\bar{c})$, $\sigma(\bar{x})$ is $\alpha (H(p))$-close to $\sigma(\bar{y})$. Note that the function
$$\alpha(t)=\rho(\alpha'(t))+2t=\rho(\rho(t)+t+\rho(\lambda)+\lambda+2\kappav)+2t$$
depends only on $\rho,\kappav,\lambda$. In particular it does not depend on the set $A$.

Now consider the quotient CAT(0) cube complex $\Pi'$ of $\Pi$ determined by all the hyperplanes except the hyperplane $h$ crossing $(\bar{a},\bar{c})$ (see \cite{chatterji2005wall,nica2004cubulating}). Let $pr \colon \Pi \rightarrow \Pi'$ be the natural projection, and choose a section $s \colon\Pi' \rightarrow \Pi$, i.e. $pr \circ s=id_{\Pi'}$. For any vertex $\bar v$ not adjacent to $h$ we have $s\circ pr(\bar v) = \bar v$, while if $\bar v$ is adjacent to $h$ then either $s\circ pr(\bar v) = \bar v$ or there is an edge crossing $h$ joining $s\circ pr(\bar v),\bar v$. The map $s$ is not uniquely determined, but for $s,s'$ two different sections, according to the analysis above the compositions $\sigma\circ s,\sigma\circ s'$ are $\alpha(H(p))$-close.
Now consider the diagram

\begin{displaymath}
\xymatrix@R=0.5cm{
                &  ~ &      X      \\
  \Pi' \ar@/^/[r]^{s} & \Pi \ar[ur]^{\sigma}  \ar@/^/[l]^{pr}                \\
                &  ~  &    A  \ar[uu]^{i} \ar[ul]_{\pi}              }
\end{displaymath}
and take $\pi'=pr \circ \pi$, $\sigma'=\sigma \circ s$. We have $\sigma' \circ \pi'\thicksim_{\alpha(H(p))}i_A$, and for any $x',y',z'\in \Pi'$, we want to estimate the distance between $\sigma'(\mu_{\Pi'}(x',y',z'))$ and $\mu(\sigma'(x'),\sigma'(y'),\sigma'(z'))$.

\textbf{Claim.} $\mu_{\Pi'}(x',y',z')=pr(s(\mu_{\Pi'}(x',y',z')))=pr(\mu_{\Pi}(s(x'),s(y'),s(z')))$.

Indeed, by the construction of $\Pi'$, for any hyperplane $h' \neq h$ of $\Pi$ and $\bar{u},\bar{v}\in \Pi$, $h'$ separates $\bar{u},\bar{v}$ if and only if $h'$, as a hyperplane of $\Pi'$, separates $pr(\bar{u}),pr(\bar{v})$. Now assume that
\begin{equation}\label{assumption1}
 pr(s(\mu_{\Pi'}(x',y',z')))\neq pr(\mu_{\Pi}(s(x'),s(y'),s(z'))),
 \end{equation}
 then there exists a hyperplane $h'$ of $\Pi'$ such that $h'$ separates $pr(s(\mu_{\Pi'}(x',y',z')))$ and $pr(\mu_{\Pi}(s(x'),s(y'),s(z')))$, which implies $h'$, as a hyperplane of $\Pi$ rather than $h$, separates $s(\mu_{\Pi'}(x',y',z'))$ and $\mu_{\Pi}(s(x'),s(y'),s(z'))$. We select a vertex $\bar v_0\in \Pi$ such that $s\circ pr(\bar v_0)=\bar v_0$ and let $h'_+$ denote the half space corresponding to the hyperplane $h'$ containing $\bar v_0$ and $h'_-$ the complementary halfspace. In $\Pi'$ we will abuse notation and also denote by $h'_+$ the halfspace containing $pr (\bar v_0)$. This ambiguity is tolerated since the points will tell us which space we are focusing on. It follows that $s(h'_+) \subseteq h'_+$ and $s(h'_-) \subseteq h'_-$. At least two of any three points must lie in the same half space corresponding to a given hyperplane $h'$, so without loss of generality we can assume $s(x'),s(y')\in h'_+$, which implies $\mu_{\Pi}(s(x'),s(y'),s(z')) \in h'_+$; on the other hand, according to the choice of orientation, we have $x',y' \in h'_+$, which implies $\mu_{\Pi'}(x',y',z') \in h'_+$, so $s(\mu_{\Pi'}(x',y',z')) \in h'_+$. This is a contradiction to our assumption (\ref{assumption1}), hence the claim is proved.

Returning to the proof we have $pr(s(\mu_{\Pi'}(x',y',z')))=pr(\mu_{\Pi}(s(x'),s(y'),s(z')))$, so either the two points $s(\mu_{\Pi'}(x',y',z')), \mu_{\Pi}(s(x'),s(y'),s(z'))$ are equal, or, by the analysis above, they are joined by an edge crossing $h$ in $\Pi$. It follows that
\[
\sigma(s(\mu_{\Pi'}(x',y',z'))) \thicksim_{\alpha(H(p))} \sigma(\mu_{\Pi}(s(x'),s(y'),s(z'))).
\]
Now for any $x',y',z'$, we have:
\begin{eqnarray*}
\sigma'(\mu_{\Pi'}(x',y',z')) &=& \sigma (s(\mu_{\Pi'}(x',y',z'))) \quad\thicksim_{\alpha(H(p))}\quad \sigma(\mu_{\Pi}(s(x'),s(y'),s(z')))\\
                              &\thicksim_{H(p)}&  \mu(\sigma(s(x')),\sigma(s(y')),\sigma(s(z')))  \quad=\quad \mu(\sigma'(x'),\sigma'(y'),\sigma'(z')),
\end{eqnarray*}
which means if we take $H'(p)=\beta(H(p))$ where $\beta(t)=\alpha(t)+t$, then
$$\sigma'(\mu_{\Pi'}(x',y',z')) \thicksim_{H'(p)} \mu(\sigma'(x'),\sigma'(y'),\sigma'(z')).$$
To sum up, if we have a 2-square in the original approximation $(\Pi,\pi,\sigma)$, then we can replace it by another approximation $(\Pi',\pi',\sigma')$. The controlling parameter $H'(p)=\beta(H(p))$ for the new approximation depends only on the original parameters $H(p),\rho,\kappav$ and the constant $\lambda$.
Since a CAT(0) cube complex generated by $p$ vertices has at most $2^p$ hyperplanes we can iterate this process at most $2^p$ times to remove all squares, ending with a new approximating tree $\Pi'$  with controlling parameter
\[
H'(p)=\beta^{2^p}(H(p))
\]
where the function $\beta$ defined above only depends on $\rho,\kappav$ and the constant $\lambda$.

This process defines a parameter $H'$ for which the approximating median algebras required by axiom (C2) can always be taken to be trees. Hence our space has rank $1$ with parameters $\rho, H'$.
\end{proof}

\subsection{Higher rank spaces}
In the proof above we deduced that the space $X$ satisfying the thin interval triangles condition (\ref{thintervaltriangles}) cannot possess arbitrary large ``coarse squares''. More explicitly, for a ``coarse square'' $a,b,c,d$ as in the proof, we showed that at least one of its parallel edge pairs must have bounded length. This intermediate result is crucial in our analysis of higher rank spaces, and it is exactly the rank 1 case of our general characterisation of rank. Intuitively, our characterisation states that a coarse median space has rank at most $n$ if and only if it does not contain arbitrarily large $(n+1)$-dimensional ``coarse cubes''.

\begin{thm}\label{char for high rank-final}
Let $(X,d,\mu)$ be a coarse median space and $n\in \mathbb{N}$. Then the following conditions are equivalent.
\begin{enumerate}
  \item $\rank X \leqslant n$;
  \item For any $\lambda>0$, there exists a constant $C=C(\lambda)$ such that for any $a,b\in X$, any $e_1,\ldots,e_{n+1}\in[a,b]$ with $\mu(e_i,a,e_j)\thicksim_\lambda a$ for all $i\neq j$, there exists $i$ such that $e_i\thicksim_C a$;
  \item For any $L>0$, there exists a constant $C=C(L)$ such that for any $L$-quasi-morphism $\sigma$ from the median $n$-cube $I^{n+1}$ to $X$, there exist adjacent points $x,y\in I^{n+1}$ such that $\sigma(x)\thicksim_C\sigma(y)$.
\end{enumerate}
\end{thm}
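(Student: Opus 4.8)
The plan is to prove the cycle $(1)\Rightarrow(3)\Rightarrow(2)\Rightarrow(1)$.

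\emph{$(2)\Rightarrow(1)$, by surgery on approximating cube complexes.} This mimics the proof of Theorem \ref{char for rank1}, collapsing $(n{+}1)$-cubes rather than squares. Given finite $A\subseteq X$ with $|A|=p$, take an approximation $(\Pi,\mu_\Pi)$, $\sigma\colon\Pi\to X$ an $H(p)$-quasi-morphism with $\sigma\pi=\iota_A$ and $\Pi=\langle\pi(A)\rangle$. If $\dim\Pi>n$ then $\Pi$ has $n{+}1$ pairwise crossing hyperplanes, hence an $(n{+}1)$-sub-cube with vertices $v_S$, $S\subseteq\{1,\dots,n{+}1\}$. Set $a=\sigma(v_\emptyset)$, $b=\sigma(v_{\{1,\dots,n+1\}})$ and $e_i'=\mu(a,\sigma(v_{\{i\}}),b)\in[a,b]$; since $v_{\{i\}}\in[v_\emptyset,v_{\{1,\dots,n+1\}}]$ and $\mu_\Pi(v_{\{i\}},v_\emptyset,v_{\{j\}})=v_\emptyset$, we get $e_i'\thicksim_{H(p)}\sigma(v_{\{i\}})$ and $\mu(e_i',a,e_j')\thicksim_\lambda a$ for a constant $\lambda=\lambda(p)$ depending only on $\rho,H(p)$. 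Applying (2) yields some $e_i'\thicksim_{C(\lambda(p))}a$, so the edge $(v_\emptyset,v_{\{i\}})$ of $\Pi$ has $\sigma$-image of bounded diameter; by Lemma \ref{zeidler} the same holds for every edge crossing its dual hyperplane, which we then collapse exactly as in Theorem \ref{char for rank1} (restoring $\sigma\pi=\iota$ up to a controlled error, cf. Lemma \ref{median rank assump}, and controlling the new quasi-morphism constant). Iterating at most $2^p$ times gives an approximation of dimension $\le n$, so $\rank X\le n$.

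\emph{$(3)\Rightarrow(2)$.} Given $a,b$ and $e_1,\dots,e_{n+1}\in[a,b]$ with $\mu(e_i,a,e_j)\thicksim_\lambda a$, define $\tau\colon I^{n+1}\to X$ on the vertex $v_S$ by the iterated coarse median $\tau(v_S)=\mu(a,\{e_i\}_{i\in S};b)$, so $\tau(v_\emptyset)=a$ and, by Lemma \ref{kappav}, $\tau(v_{\{i\}})=\mu(a,e_i,b)\thicksim_{\kappav}e_i$. I claim $\tau$ is an $L$-quasi-morphism with $L$ depending only on $\lambda,n,\rho,H$: each instance of the quasi-morphism inequality compares two fixed formal ternary expressions in the variables $a,b,e_1,\dots,e_{n+1}$ which become equal once one imposes the median axioms together with the relations $m(e_i,a,e_j)=a$ and $m(a,e_i,b)=e_i$ (equivalently, $v_S\mapsto m(a,\{e_i\}_{i\in S};b)$ is a median homomorphism from $I^{n+1}$ into that quotient algebra), so a version of Proposition \ref{strong-Zeidler} that in addition permits the finitely many invocations of the \emph{approximate} relations $\mu(e_i,a,e_j)\thicksim_\lambda a$ and $\mu(a,e_i,b)\thicksim_{\kappav}e_i$ bounds the defect. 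Now apply (3): some adjacent pair $v_S,v_{S\cup\{i\}}$ has $d(\tau v_S,\tau v_{S\cup\{i\}})\le C(L)$; this edge is parallel in $I^{n+1}$ to $(v_\emptyset,v_{\{i\}})$, so Lemma \ref{zeidler} gives $d(a,e_i)\le d(\tau v_\emptyset,\tau v_{\{i\}})+\kappav\le\rho(C(L))+2L+\kappav$, a bound depending only on $\lambda$. This is (2).

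\emph{$(1)\Rightarrow(3)$ --- the hard step.} We must show an $L$-quasi-morphism $\sigma\colon I^{n+1}\to X$ into a rank-$\le n$ space identifies two adjacent vertices up to $C(L)$. Suppose not; by Lemma \ref{zeidler} we may assume every edge at the base vertex of $I^{n+1}$ has $\sigma$-image of diameter $>C$. Approximate $\sigma(I^{n+1})$ by a rank-$\le n$ median algebra $\Pi$, $s\colon\Pi\to X$ a quasi-morphism with $s\pi=\iota$; then $g=\pi\sigma\colon I^{n+1}\to\Pi$ satisfies $\Pi=\langle g(I^{n+1})\rangle$. The technical obstacle is that $s$ need not be a quasi-isometric embedding, so an approximate identity in $X$ need not descend to $\Pi$; the plan, as in Theorem \ref{char for rank1}, is to first run the hyperplane-collapsing surgery on $\Pi$ so that every surviving hyperplane is ``long'' under $s$. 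After this, along any geodesic of $\Pi$ the points $s(v_0),s(v_1),\dots$ are coarsely monotone inside the coarse interval between their ends (by the quasi-morphism property) with uniformly long consecutive steps, which forces $s$ to be coarsely injective with constants independent of $\Pi$; hence $g$ becomes a genuine quasi-morphism $I^{n+1}\to\Pi$ with controlled constant. Finally I would invoke the cube-complex principle that a quasi-morphism from the $(n{+}1)$-cube into a CAT(0) cube complex of dimension $\le n$ must coarsely collapse one of its $n{+}1$ edge-directions: if none collapsed, the $n{+}1$ families of parallel edges would, after passing to convex hulls, separate $\Pi$ by $n{+}1$ pairwise crossing hyperplanes (a vertex of $I^{n+1}$ lies in each quadrant of any two of them), contradicting $\dim\Pi\le n$; tracing the bounds back through the surgery and $s$ produces the required $C=C(L)$, contradicting our assumption. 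The coarse injectivity lemma and this last principle are the parts requiring the most care.
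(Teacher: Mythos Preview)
Your $(2)\Rightarrow(1)$ is essentially the paper's $(3)\Rightarrow(1)$ (the surgery argument works equally well from either hypothesis), and your $(3)\Rightarrow(2)$ is a reasonable idea, though the claim that $\tau(v_S)=\mu(a,\{e_i\}_{i\in S};b)$ is an $L$-quasi-morphism really does need a proof --- your appeal to ``a version of Proposition~\ref{strong-Zeidler} with extra approximate relations'' is plausible but is not something established in the paper, and checking it carefully (say via Lemmas~\ref{itrt eqn coarse} and~\ref{itrt eqn coarse2}) is nontrivial.

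The real problem is your $(1)\Rightarrow(3)$. You have correctly identified that $\pi\sigma\colon I^{n+1}\to\Pi$ is not a priori a quasi-morphism, and your proposed fix --- collapse ``short'' hyperplanes of $\Pi$ until $s$ becomes coarsely injective, then deduce that $\pi\sigma$ is a controlled quasi-morphism --- has a genuine gap. Coarse monotonicity along geodesics of $\Pi$ together with long consecutive steps does \emph{not} obviously force $s$ to be coarsely injective in a general coarse median space: the number of surviving hyperplanes (hence the length of geodesics in $\Pi$) is not bounded in terms of $L$ alone, so you cannot simply sum up the step bounds, and there is no triangle-type inequality in a coarse median space that lets you conclude $d(s(v_0),s(v_k))$ is large from the individual edges being long. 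Your final ``cube-complex principle'' is also stated without proof and is itself not obvious for quasi-morphisms with uncontrolled target size.

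The paper sidesteps all of this by running the cycle in the opposite direction, $(1)\Rightarrow(2)\Rightarrow(3)\Rightarrow(1)$. The key point you are missing is that $(1)\Rightarrow(2)$ is \emph{easy} once you use Lemma~\ref{itrt median}(3): approximate $\{a,b,e_1,\dots,e_{n+1}\}$ by a rank-$\le n$ median algebra $\Pi$ (which you may take to be the interval $[\bar a,\bar b]$), and then Lemma~\ref{itrt median}(3) says the iterated median $\mu_\Pi(\bar e_1,\dots,\bar e_{n+1};\bar b)$ already equals $\mu_\Pi(\bar e_1,\dots,\bar e_n;\bar b)$ after reordering, i.e.\ $\bar e_{n+1}\in[\bar a,\mu_\Pi(\bar e_1,\dots,\bar e_n;\bar b)]$. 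Transferring this identity back to $X$ via Lemmas~\ref{itrt eqn},~\ref{itrt C1},~\ref{itrt C2} and the hypothesis $\mu(e_i,a,e_j)\thicksim_\lambda a$ gives $e_{n+1}\thicksim_C a$ directly. Then $(2)\Rightarrow(3)$ is a short computation (just read off $a,b,e_i$ from the corners of $\sigma(I^{n+1})$), and $(3)\Rightarrow(1)$ is the surgery you already have. No coarse-injectivity lemma is needed anywhere.
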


In condition (2) of the above theorem, one should imagine $a$ as a corner of an $(n+1)$-``coarse cube'', and $e_1,\ldots,e_{n+1}$ as endpoints of edges adjacent to $a$. 

\begin{proof}[Proof of Theorem \ref{char for high rank-final}]
\textbf{\emph{(1) $\Rightarrow$ (2):}} Let $\rho,H$ be parameters of $(X,d,\mu)$ under which $\rank X \leqslant n$ can be achieved. Given $\lambda>0$, $a,b\in X$ and $e_1,\ldots,e_{n+1}\in[a,b]$ with $\mu(e_i,a,e_j)\thicksim_\lambda a$ ($i\neq j$), we set $A=\{a,b,e_1,\ldots,e_{n+1}\}$. By axiom C2, there exists a finite rank $n$ median algebra $\Pi$, and maps $\pi \colon A \rightarrow \Pi$ and $\sigma \colon \Pi \rightarrow X$ satisfying the conditions in C2 and $\sigma \circ \pi =i_A$. Denote $\pi(a),\pi(b), \pi(e_i)$ by $\bar{a},\bar{b}, \bar{e}_i$. According to Lemma \ref{redefine cma on intvl}, without loss of generality, we can always take $\Pi=[\bar{a},\bar{b}]$ after changing the parameters within some controlled bounds if necessary. To make it clear, consider the following diagram:
\begin{displaymath}
\xymatrix@R=0.5cm{
                &  ~ &      X      \\
  [\bar{a},\bar{b}] \ar@/^/[r]^{inclusion} & \Pi \ar[ur]^{\sigma}  \ar@/^/[l]^{\mu_{\Pi}(\bar{a},\bar{b},\cdot)}                \\
                &  ~  &    A.  \ar[uu]^{i} \ar[ul]_{\pi}              }
\end{displaymath}
Define $\pi' \colon A \rightarrow [\bar{a},\bar{b}]$ by $x \mapsto \mu_{\Pi}(\bar{a},\bar{b},\pi{x})$, and $\sigma' \colon [\bar{a},\bar{b}] \rightarrow X$ by $\bar{c} \mapsto \sigma(\bar{c})$. Note that $\sigma'\circ\pi'(e_i)=\sigma(\mu_{\Pi}(\bar{a},\bar{b},\bar{e}_i))\thicksim_{H(n+3)} \mu(a,b,e_i)\thicksim_\kappav e_i$, since $e_i \in [a,b]$. For convenience, we still write $\pi,\sigma$ instead of $\pi',\sigma'$. To sum up, after modifying $\rho,H$ if necessary, we can always find a finite rank $n$ median algebra $\Pi$ and maps $\pi \colon A \rightarrow \Pi$ and $\sigma \colon \Pi \rightarrow X$ such that $\Pi=[\bar{a},\bar{b}]$, and conditions in (C2) hold (here we cannot require $\sigma \circ \pi =i_A$ again, but just close to $i_A$).

Since $[\bar{a}, \bar{b}]$ has rank at most $n$, without loss of generality, we can assume, by Lemma \ref{itrt median}, that
$$\mu_{\Pi}(\bar{e}_1,\ldots,\bar{e}_{n+1};\bar{b})=\mu_{\Pi}(\bar{e}_1,\ldots,\bar{e}_n;\bar{b}),$$
which implies that
$$\bigcap_{i=1}^{n+1} [\bar{e}_i,\bar{b}]=\bigcap_{i=1}^n [\bar{e}_i,\bar{b}].$$
So
$$\bigcap_{i=1}^n [\bar{e}_i,\bar{b}] \subseteq [\bar{e}_{n+1},\bar{b}], \text{~i.e.~} \mu(\bar{e}_1,\ldots,\bar{e}_n;\bar{b}) \in [\bar{e}_{n+1},\bar{b}].$$ Equivalently, $\bar{e}_{n+1} \in [\bar{a},\mu(\bar{e}_1,\ldots,\bar{e}_n;\bar{b})]$ since $\bar{e}_i\in [\bar{a},\bar{b}]$. By Lemma \ref{itrt eqn}, we have:
$$\bar{e}_{n+1}=\mu_\Pi(\bar{a},\bar{e}_{n+1},\mu(\bar{e}_1,\ldots,\bar{e}_n;\bar{b}))=\mu_\Pi(\mu_\Pi(\bar{a},\bar{e}_{n+1},\bar{e}_1),\ldots,\mu_\Pi(\bar{a},\bar{e}_{n+1},\bar{e}_n);\bar{b}).$$
Now translate the above equation into $X$, recall that $\sigma(\bar{e}_i)\thicksim_{H(n+3)}e_i$. By Lemma \ref{itrt C2}, there exists a constant $\alpha(n,\rho,H)$:
\begin{eqnarray*}
e_{n+1} &\thicksim_{H(n+3)} & \sigma(\bar{e}_{n+1})=\sigma(\mu_\Pi(\mu_\Pi(\bar{a},\bar{e}_{n+1},\bar{e}_1),\ldots,\mu_\Pi(\bar{a},\bar{e}_{n+1},\bar{e}_n);\bar{b}))\\
        &\thicksim_{\alpha(n,\rho,H)}& \mu(\sigma(\mu_\Pi(\bar{a},\bar{e}_{n+1},\bar{e}_1)),\ldots,\sigma(\mu_\Pi(\bar{a},\bar{e}_{n+1},\bar{e}_n));\sigma(\bar{b})).
\end{eqnarray*}
By (C1) and (C2), we know
$$\sigma(\mu_\Pi(\bar{a},\bar{e}_{n+1},\bar{e}_i))\thicksim_{H(n+3)}\mu(\sigma(\bar{a}),\sigma(\bar{e}_{n+1}),\sigma(\bar{e}_i)) \thicksim_{\rho(3H(n+3))} \mu(a,e_{n+1},e_i)\thicksim_\lambda a.$$
Now by Lemma \ref{itrt C1}, there exists a constant $\beta(\lambda,n,\rho,H)$ such that
$$\mu(\sigma(\mu_\Pi(\bar{a},\bar{e}_{n+1},\bar{e}_1)),\ldots,\sigma(\mu_\Pi(\bar{a},\bar{e}_{n+1},\bar{e}_n));\sigma(\bar{b})) \thicksim_{\beta(\lambda,n,\rho,H)} \mu(a,\ldots,a;b)=a,$$
which implies there exists some constant $C=C(\lambda)$ such that $e_{n+1}$ is $C$-close to $a$.

\textbf{\emph{(2) $\Rightarrow$ (3):}} Let $\rho,H$ be parameters of $(X,d,\mu)$. Let $\bar 0$ denote the zero vector in the median cube $(I^{n+1}, \mu_{n+1})$, let $\bar 1$ denote the vector with $1$ in all coordinates, and $\bar{e_i}$ denote the basis vector with a single $1$ in the $i$th coordinate. Given an $L$-quasi-morphism $\sigma: I^{n+1} \rightarrow X$, let $a=\sigma(\bar 0), b=\sigma(\bar 1)$ and $e_i=\sigma(\bar{e}_i)$. Since $\bar{e}_i \in [\bar 0,\bar 1]$, $\mu_{n+1}(\bar 0,\bar{e}_i,\bar 1)=\bar{e}_i$, which implies
$$e_i':=\mu(a,b,e_i)=\mu(\sigma(\bar 0),\sigma(\bar 1),\sigma(\bar{e}_i))\thicksim_{L}\sigma(\mu_{n+1}(\bar 0,\bar{e}_i,\bar 1))=e_i$$
and $e_i'\in[a,b]$. Now $\mu(e_i',a,e_j')=\mu(\mu(a,b,e_i),a,\mu(a,b,e_j))\thicksim_\kappav\mu(a,b,\mu(e_i,e_j,a))$, and
$$\mu(e_i,e_j,a)=\mu(\sigma(\bar{e}_i),\sigma(\bar{e}_j),\sigma(\bar 0)) \thicksim_{L} \sigma(\mu_{n+1}(\bar{e}_i,\bar{e}_j,\bar 0))=a.$$
So $\mu(e_i',a,e_j') \thicksim_{\rho(L)+\kappav} \mu(a,b,a)=a$. Now take $\lambda=\rho(L)+\kappav$, and by the assumption, there exists a constant $C'$ depending on $\lambda$ and hence implictly on $L$, such that one of $e_1',\ldots,e_{n+1}'$ is $C'$-close to $a$. This implies that  one of the points $\sigma(\bar{e_i})=e_i$ is $C$-close to $\sigma(\bar{0})=a$ for $C=C'+L$.

\textbf{\emph{(3) $\Rightarrow$ (1):}} Let $\rho,H$ be parameters of $(X,d,\mu)$. By definition, for any $p\in \mathbb{N}$ and $A \subseteq X$ with $|A|=p$, there exists a finite median algebra $(\Pi, \mu_{\Pi})$, and maps $\pi \colon A \rightarrow \Pi$, $\sigma \colon \Pi \rightarrow X$ satisfying axioms (C1), (C2), the conditions in Remark \ref{median assump} and $\sigma \pi = i_A$. Denote the associated finite CAT(0) cube complex also by $\Pi$. Now we need to modify $\Pi,\pi,\sigma$ to ensure that  $\Pi$ has rank $n$.

If $\rank \Pi=n$, nothing need to be modified. Assume $\rank \Pi \geqslant n+1$, so that, as a CAT(0) cube complex it has dimension at least $n+1$ and thus contains an $(n+1)$-cube. As a median subalgebra this is isomorphic to the median cube $(I^{n+1}, \mu_{n+1})$. By condition (3) with $L=H(p)$, we know there exists a constant $C$ depending on $L$ and hence on $p$ such that, without loss of generality, $\sigma(\bar{e}_{n+1})$ is $C$-close to $\sigma(\bar 0)$. Now construct the quotient CAT(0) cube complex $\Pi'$ from $\Pi$ by deleting the hyperplane $h$ crossing the edge $(\bar{0},\bar{e}_{n+1})$. Then the rest of the proof follows exactly that of Theorem \ref{char for rank1}.
\end{proof}

As mentioned in previous sections, the above theorem offers a way to define the rank of a coarse median space in the simplified setting. More explicitly, Theorem \ref{simp.cma} and Theorem \ref{char for high rank-final} combine to show
:
\begin{thm}\label{simp.cma.fnl}
Let $(X,d)$ be a metric space, and $\mu\colon X^3 \rightarrow X$ a ternary operation. Then $(X,d,\mu)$ is a coarse median space of rank at most $n$ if and only if the following  conditions hold:
\begin{itemize}
  \item[(M1).] $\mu(a,a,b)=a$ for any $a,b\in X$
  \item[(M2).] $\mu(a_{\sigma(1)}, a_{\sigma(2)},a_{\sigma(3)})=\mu(a_1,a_2,a_3)$, for any $a_1,a_2,a_3\in X$ and $\sigma$ a permutation;
  \item[(C1)'.] There exists an affine control function $\rho:[0,+\infty)\to [0,+\infty)$ such that for all $a,a',b,c\in X$,
      $$d(\mu(a,b,c), \mu(a',b,c)) \leqslant \rho(d(a,a'));$$
  \item[(C2)'.] There exists a constant $\kappaiv>0$ such that for any $a,b,c,d\in X$, we have
      $$\mu(\mu(a,b,c),b,d) \thicksim_{\kappaiv} \mu(a,b, \mu(c,b,d));$$
  \item[(C3)'.] $\forall\lambda>0, \exists C=C(\lambda)$ such that for any $a,b\in X$, any $e_1,\ldots,e_{n+1}\in[a,b]$ with $\mu(e_i,a,e_j)\thicksim_\lambda a$ for all $i\neq j$, there exists $i$ such that $e_i\thicksim_C a$.
\end{itemize}
\end{thm}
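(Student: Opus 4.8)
The plan is to derive Theorem~\ref{simp.cma.fnl} by assembling Theorem~\ref{simp.cma} and Theorem~\ref{char for high rank-final}; no genuinely new argument is required, only a check of the interface between the two characterisations and of the relationship between the axiom (C0)' used in Theorem~\ref{simp.cma} and the on-the-nose identities (M1), (M2).

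First I would prove that the five listed conditions hold in any coarse median space $(X,d,\mu)$ of rank at most $n$. Axioms (M1) and (M2) are part of Definition~\ref{def for cma}. Axiom (C1)' is obtained from (C1) by specialising the estimate there to $b'=b$ and $c'=c$. Axiom (C2)' is the coarse $4$-point inequality deduced from (C1) and (C2) in the paragraph preceding Theorem~\ref{simp.cma}, with $\kappaiv=2\rho(H(4))+2H(4)$. Finally (C3)' is word for word condition~(2) of Theorem~\ref{char for high rank-final}, while $\rank X\leqslant n$ is exactly condition~(1) there; so this direction is just the implication (1)$\Rightarrow$(2) of that theorem. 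In particular the constant $C(\lambda)$ in (C3)' depends only on $\lambda$ and the parameters $\rho,H$, not on the chosen points $a,b$, as required by the statement.

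For the converse, suppose $(X,d,\mu)$ satisfies (M1), (M2), (C1)', (C2)', (C3)'. Then (M1) and (M2) give, a fortiori, the coarse localisation and coarse symmetry axiom (C0)' with $\kappao=0$, so the hypotheses (C0)'\,$\sim$\,(C2)' of Theorem~\ref{simp.cma} are all satisfied; that theorem supplies axioms (C1) and (C2) for the operator $\mu$ itself. Hence $(X,d,\mu)$ is a coarse median space in the sense of Definition~\ref{def for cma}, so it makes sense to speak of its intervals $[a,b]=\{\mu(a,x,b):x\in X\}$ and of its rank, and these intervals are precisely those appearing in (C3)'. Condition (C3)' is then condition~(2) of Theorem~\ref{char for high rank-final}, so by the implication (2)$\Rightarrow$(1) of that theorem, $\rank X\leqslant n$.

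Since every ingredient is already proved, I do not expect a real obstacle; the only delicate point is the bookkeeping where the two results meet. One must note that Theorem~\ref{simp.cma} produces $\mu$ \emph{itself} (not merely a uniformly close operator) as a coarse median, so that the interval structure appearing in (C3)' coincides with the one used in Theorem~\ref{char for high rank-final}, and that the passage from the ostensibly weaker axiom (C0)' to the exact identities (M1), (M2) is used only in the trivial direction here. If desired, the parameter dependence can be made explicit, recording for instance that a rank bound achieved under $\rho,H$ yields (C2)' with $\kappaiv=2\rho(H(4))+2H(4)$ and (C3)' with the constant function furnished by Theorem~\ref{char for high rank-final}.
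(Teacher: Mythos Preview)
Your proposal is correct and matches the paper's approach exactly: the paper does not give a separate proof of Theorem~\ref{simp.cma.fnl} but merely states that ``Theorem~\ref{simp.cma} and Theorem~\ref{char for high rank-final} combine to show'' it. Your write-up in fact spells out the interface checks (that (M1), (M2) give (C0)' with $\kappao=0$, and that Theorem~\ref{simp.cma} produces $\mu$ itself as a coarse median so that (C3)' coincides with condition~(2) of Theorem~\ref{char for high rank-final}) more carefully than the paper does.
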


\section{A counterexample}\label{counterexample}
Recall that in a discrete median algebra, or equivalently, a CAT(0) cube complex, the interval between two points is the set of points which lie on geodesics connecting them. This makes a bridge between the algebraic aspect and the geometry of the object. It is natural to ask to what extent this holds in a coarse median space? As we have already seen in Theorem \ref{rank1}, it is ``almost'' true in rank $1$: intervals are ``about the same'' as the union of quasi-geodesics. As we shall see, the interaction of geodesics and intervals in higher rank is considerably less well behaved. We will show:

\begin{thm}
There is a rank $2$ geodesic coarse median space $(X,d,\mu)$ such that for any $C\geqslant 0$ there exist  $a,b\in X$, with the property that no geodesic $\gamma$ connecting $a,b$ lies within Hausdorff distance $C$ of the interval  $[a,b]$.
\end{thm}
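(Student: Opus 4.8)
The plan is to build a single geodesic coarse median space of rank $2$ out of the Euclidean plane by inserting a sequence of ``expensive'' round obstacles of unbounded size; for each $C$ there will then be a pair of points whose coarse median interval runs straight through one obstacle while every metric geodesic is forced to detour around it. Concretely I would take $X=\mathbb{R}^2$ with $\mu$ the coordinatewise median (equivalently the $\ell^1$ median), and let $d$ be the length metric in which the $d$-length of a rectifiable path is its Euclidean length outside $\bigcup_k B_k$ plus three times its Euclidean length inside $\bigcup_k\overline{B_k}$, where $B_k$ is the open disc of radius $k$ centred at $c_k=(0,3^k)$, $k\geqslant 1$. Since $3^{k+1}-3^k=2\cdot 3^k$ grows faster than $(k+1)+k$, the closures $\overline{B_k}$ are pairwise disjoint, and moreover the $2k$-ball about $c_k$ meets no $B_j$ with $j\neq k$.

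First I would dispatch the structural facts. As $d_{\mathrm{eucl}}\leqslant d\leqslant 3\,d_{\mathrm{eucl}}$, the space $(X,d)$ is a proper length space, hence geodesic, and the identity $(\mathbb{R}^2,d_{\mathrm{eucl}})\to(X,d)$ is a quasi-isometry; consequently $\mu$ is coarsely Lipschitz for $d$, which gives (C1)$'$ with an affine $\rho$. Since $(\mathbb{R}^2,\mu)$ is a genuine median algebra whose finitely generated subalgebras are finite (they sit inside a finite coordinate grid) and of rank at most $2$, axioms (M1), (M2) and (C2) hold on the nose with $H\equiv 0$ and the approximating algebras have rank at most $2$; and because $X$ is quasi-isometric to the Euclidean plane it is not hyperbolic, so by the rank-one characterisation its rank is exactly $2$. (If one additionally wants the two notions of interval to differ, $[a,b]_0\neq[a,b]$, one may instead pull the median of $\mathbb{Z}^2$ back along a nearest-lattice-point map and repair (M1), (M2) as in Remark \ref{median assump}; nothing in the argument below changes.)

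For the main step, given $C\geqslant 0$ I would choose $k>2C$ and take $a=c_k+(-2k,0)$, $b=c_k+(2k,0)$. Since $a$ and $b$ agree in the second coordinate, the interval $[a,b]=\{\mu(a,y,b):y\in X\}$ is exactly the horizontal segment of length $4k$ through $c_k$ — in particular it passes through the centre of $B_k$. Next I would bound $d(a,b)$ above by the length of the broken path made of the two common tangent segments from $a$ and $b$ to $\partial B_k$ and the shorter boundary arc between the tangent points: this path avoids the interior of $B_k$ and (lying within the $2k$-ball about $c_k$) meets no obstacle, so $d(a,b)\leqslant 2\sqrt{3}\,k+\tfrac{\pi}{3}k<4.6\,k$. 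For the lower bound I would argue that if a geodesic $\gamma$ from $a$ to $b$ lay inside $\mathcal{N}_{k/2}([a,b])$, then inside the vertical band $V_k=\{c_k+(x,y):|x|\leqslant k/2\}$ we have $\mathcal{N}_{k/2}([a,b])\cap V_k$ equal to the $k\times k$ square about $c_k$, which is contained in $B_k$; hence the portion of $\gamma$ in $V_k$ lies in $B_k$, and, as $\gamma$ runs from a point left of $V_k$ to a point right of $V_k$, that portion has Euclidean length at least $k$ and thus $d$-length at least $3k$, while the two flanks of $\gamma$ have total $d$-length at least $2\cdot\tfrac{3k}{2}=3k$; this forces $d(a,b)=\mathrm{len}_d(\gamma)\geqslant 6k>4.6\,k$, a contradiction. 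Therefore every geodesic $\gamma$ from $a$ to $b$ leaves $\mathcal{N}_{k/2}([a,b])$, so the Hausdorff distance between $[a,b]$ and $\mathrm{Im}\,\gamma$ is at least $\sup_{x\in\mathrm{Im}\,\gamma}d(x,[a,b])>\tfrac{k}{2}>C$, which is the assertion.

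The only substantive point is the detour estimate: one has to calibrate the obstacle cost (any fixed factor $>1$ works, but a little slack, here $3$, makes the arithmetic comfortable) and the position of $a,b$ relative to $B_k$ so that hugging the interval is strictly more expensive than the tangent-and-arc detour, and one must take a little care that the part of $\gamma$ inside the critical band really is trapped inside $B_k$ and has Euclidean length at least the band width, even if $\gamma$ re-enters the band several times. Verifying that $d$ is geodesic, that $\mu$ is a coarse median of rank $2$, and that $[a,b]$ is the stated segment is then routine.
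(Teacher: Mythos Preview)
Your construction is correct and takes a genuinely different route from the paper's. The paper works on $\mathbb{Z}^2$ with the standard coordinatewise median but a weighted edge-path metric in which vertical edges become progressively more expensive as the first coordinate increases (length $1$ for $x\leqslant 0$, length $3$ for $x=1$, length $5$ for $x\geqslant 2$); the points $a_n=(n+1,n+1)$ and $b_n=(n+1,0)$ then have interval $\{n+1\}\times[0,n+1]$, while every geodesic detours left to use the cheap vertical edges, giving Hausdorff distance $n+1$. Since that weighted lattice is not itself geodesic, the paper must subdivide the long edges and extend the median via a floor map to repair (M1), (M2). Your approach instead perturbs the Euclidean length structure on $\mathbb{R}^2$ by tripling the cost inside a sparse family of discs; this is geodesic from the outset, and $\mu$ is an honest median so (C2) holds exactly with $H\equiv 0$ and rank $\leqslant 2$ approximants. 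The detour estimate is then a clean comparison between the tangent-and-arc path (length $<4.6\,k$) and any path confined to the $k/2$-tube around the interval (length $\geqslant 6k$, since the portion in the central band of width $k$ is trapped in $B_k$ and the flanks each cost at least $3k/2$). Both constructions are bi-Lipschitz to the standard plane, which is the common mechanism yielding rank exactly $2$; yours avoids the discretisation fix-up at the price of a slightly more analytic length computation, while the paper's global anisotropy makes the geodesic visible by inspection without any estimate.

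One small wording issue: as written, a boundary circle $\partial B_k$ lies both ``outside $\bigcup_k B_k$'' and ``inside $\bigcup_k\overline{B_k}$'', so a path along it is double-counted with weight $4$, which would spoil your upper bound if taken literally. This does not affect the induced length metric $d$ (any boundary arc can be pushed to radius $k+\varepsilon$), but you should define the weight unambiguously as $3$ on the open discs and $1$ elsewhere, or simply route the comparison path just outside $\partial B_k$.
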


\begin{proof}
Let $X=\mathbb{Z}^2$, and $\mu$ the canonical median operator on $X$, i.e.
$$\mu((x_1,y_1),(x_2,y_2),(x_3,y_3))=(m(x_1,x_2,x_3),m(y_1,y_2,y_3)),$$
where $(x_1,y_1),(x_2,y_2),(x_3,y_3) \in \mathbb{Z}^2$ and $m(\cdot,\cdot,\cdot)$ is the classical median of three real numbers. Equip $(X,\mu)$ with a metric $d$ as follows. We will view $X$ as a graph in the canonical way, and define $d$ to be a weighted edge-path metric on $X$, induced by assigning a positive number to each edge as its length. We define all horizontal edges to have length 1, i.e. $d((x,y),(x+1,y))=1$ for all $(x,y)\in \mathbb{Z}^2$; lengths of vertical edges are listed as follows. See Figure \ref{fig:def}.
\begin{figure}[htbp]
  \centering
  \includegraphics[width=10.5cm]{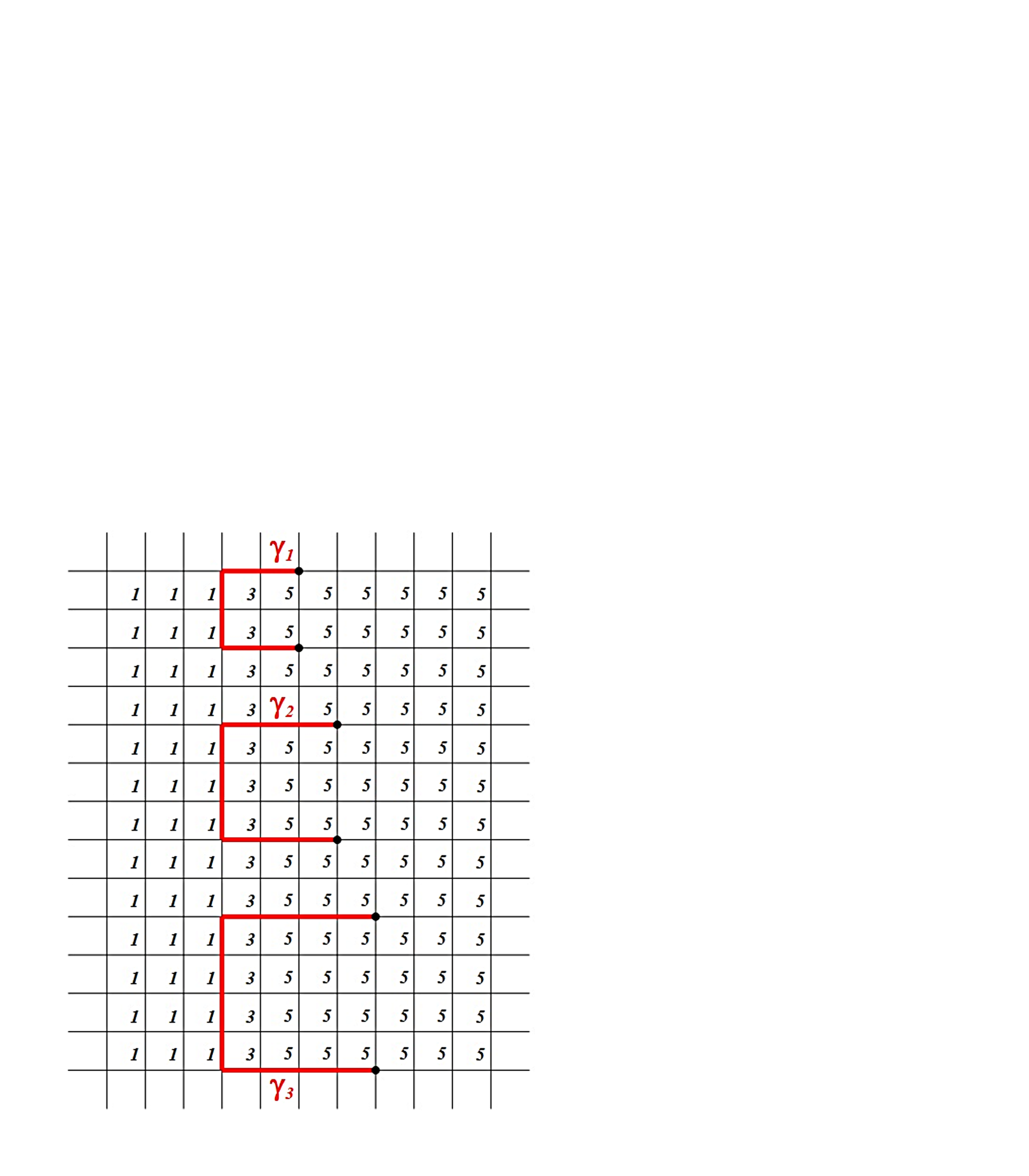}
  \caption{Lengths of vertical edges in the metric $d$ on $\mathbb{Z}^2$} \label{fig:def}
\end{figure}
\begin{itemize}
  \item If $x\leqslant 0$: $d((x,y),(x,y+1))=1$;
  \item If $x=1$: $d((x,y),(x,y+1))=3$;
  \item If $x \geqslant 2$: $d((x,y),(x,y+1))=5$.
\end{itemize}

Now consider a sequence $\{\gamma_n\}$ of edge paths in $(X,d)$: for each $n\in \mathbb{N}$, define a path starting from $a_n=(n+1,n+1)$, which travels horizontally leftwards to $(0,n+1)$, then vertically down to $(0,0)$ and finally horizontally rightwards to $b_n=(n+1,0)$. Again, see Figure \ref{fig:def}.

It is easy to check that the paths $\gamma_n$ are geodesic for all $n$. We claim that $(X,d,\mu)$ is a coarse median space. In fact, consider the identity map $id:(X,d_1,\mu) \rightarrow (X,d,\mu)$, where $d_1$ is the edge-path metric defined by each edge having length 1. It is obvious that this is a bi-Lipschitz map and a median morphism, so $(X,d,\mu)$ is indeed a coarse median space of rank 2. Now notice that $[a_n,b_n]=\{n+1\}\times[0,n+1]$. So $d_H(\mathrm{Im}\gamma_n,[a_n,b_n])=n+1$, which implies there is no uniform bound on the Hausdorff distance between geodesics and intervals.

Unfortunately the space we have constructed is not geodesic, for instance the points $(3,0), (3,1)$ are distance $5$ apart, but there is no (integer) path of length less than $7$ connecting them. We can modify the space easily to rectify this problem by subdividing the   edges of length $n>1$ by inserting $n-1$ points declared to be unit distance apart as appropriate. The natural extension of the metric $d$ to include these points is now geodesic and we continue to denote it by $d$. We are left with the issue of how to define coarse medians involving the inserted points. This is dealt with by projecting each point $p= (x,y)$ to its ``floor'' $\underline{p} =(x,\lfloor y\rfloor)$, the nearest original point to the  vertex at or below it on a vertical line. We then define the ternary operator $\mu'$ by $\mu'(p,q,r)=\mu(\underline{p},\underline{q},\underline{r})$ when $p,q,r$ are distinct and if two are equal then we define $\mu'(p,q,r)$ to be that point. Since the floor map moves points at most a distance of $4$ in the $d$ metric, $\mu'$ is still a coarse median operator with respect to the extended metric $d$, and intervals between $a_n$ and $b_n$ remain the same.
\end{proof}

\bibliographystyle{plain}
\bibliography{bibfileCMA}

\end{document}